\numberwithin{equation}{section}
\theoremstyle{plain}
\newtheorem{theorem}{Theorem}[section]
\newtheorem{corollary}[theorem]{Corollary}
\newtheorem{lemma}[theorem]{Lemma}
\newtheorem{proposition}[theorem]{Proposition}
\newtheorem{definition}[theorem]{Definition}
\theoremstyle{remark}
\newtheorem{remark}{Remark}[section]
\begin{document}

%\date{\today} 

\title[On free boundary minimal submanifolds in geodesic balls]{On free boundary minimal submanifolds in geodesic balls in $\mathbb H^n$ and $\mathbb S^n_+$}

\author{Vladimir Medvedev}

\address{Faculty of Mathematics, National Research University Higher School of Economics, 6 Usacheva Street, Moscow, 119048, Russian Federation}

\email{vomedvedev@hse.ru}

%\thanks{}

%\subjclass{}

\begin{abstract}
We consider free boundary minimal submanifolds in geodesic balls in the hyperbolic space $\mathbb H^n$ and in the round upper hemisphere $\mathbb S^n_+$. Recently, Lima and Menezes have found a connection between free boundary minimal surfaces in geodesic balls in $\mathbb S^n_+$ and maximal metrics for a functional, defined on the set of Riemannian metrics on a given compact surface with boundary. This connection is similar to the connection between free boundary minimal submanifolds in Euclidean balls and the critical metrics of the functional "the $k$-th normalized Steklov eigenvalue", introduced by Faser and Schoen. We define two natural functionals on the set of Riemannian metrics on a compact surface with boundary. One of these functionals is the high order generalization of the functional, introduced by Lima and Menezes. We prove that the critical metrics for these functionals arise as metrics induced by free boundary minimal immersions in geodesic balls in $\mathbb H^n$ and in $\mathbb S^n_+$, respectively.  We also prove a converse statement. Besides that, we discuss the (Morse) index of free boundary minimal submanifolds in geodesic balls in $\mathbb H^n$ or $\mathbb S^n_+$. We show that the index of the critical spherical catenoid in a geodesic ball in $\mathbb S^3_+$ is 4 and the index of the critical spherical catenoid in a geodesic ball in $\mathbb H^3$ is at least 4. We prove that the index of a geodesic $k$-ball in a geodesic $n$-ball in $\mathbb H^n$ or $\mathbb S^n_+$ is $n-k$. For the proof of these statements we introduce the notion of spectral index similarly to the case of free boundary minimal submanifolds in a unit ball in the Euclidean space.
\end{abstract}

\maketitle

%%%%%%%%%%%%%%%%%%%%%%%%%%%%%%%%%%%%%%%%%%%%%%%%%%%%%%%%%%%%%%%%%%%%%%%%%
% Macros
%%%%%%%%%%%%%%%%%%%%%%%%%%%%%%%%%%%%%%%%%%%%%%%%%%%%%%%%%%%%%%%%%%%%%%%%%

\newcommand\cont{\operatorname{cont}}
\newcommand\diff{\operatorname{diff}}

\newcommand{\dvol}{\text{dA}}
\newcommand{\Ric}{\operatorname{Ric}}
\newcommand{\GL}{\operatorname{GL}}
\newcommand{\myO}{\operatorname{O}}
\newcommand{\myP}{\operatorname{P}}
\newcommand{\eye}{\operatorname{Id}}
\newcommand{\myF}{\operatorname{F}}
\newcommand{\Vol}{\operatorname{Vol}}
\newcommand{\odd}{\operatorname{odd}}
\newcommand{\even}{\operatorname{even}}
\newcommand{\ol}{\overline}
\newcommand{\mye}{\operatorname{E}}
\newcommand{\myo}{\operatorname{o}}
\newcommand{\myt}{\operatorname{t}}
\newcommand{\irr}{\operatorname{Irr}}
\newcommand{\mydiv}{\operatorname{div}}
\newcommand{\re}{\operatorname{Re}}
\newcommand{\im}{\operatorname{Im}}
\newcommand{\can}{\operatorname{can}}
\newcommand{\scal}{\operatorname{scal}}
\newcommand{\tr}{\operatorname{trace}}
\newcommand{\sgn}{\operatorname{sgn}}
\newcommand{\SL}{\operatorname{SL}}
\newcommand{\myspan}{\operatorname{span}}
\newcommand{\mydet}{\operatorname{det}}
\newcommand{\SO}{\operatorname{SO}}
\newcommand{\SU}{\operatorname{SU}}
\newcommand{\specl}{\operatorname{spec_{\mathcal{L}}}}
\newcommand{\fix}{\operatorname{Fix}}
\newcommand{\id}{\operatorname{id}}
\newcommand{\grad}{\operatorname{grad}}
\newcommand{\singsup}{\operatorname{singsupp}}
\newcommand{\wave}{\operatorname{wave}}
\newcommand{\ind}{\operatorname{ind}}
\newcommand{\mynull}{\operatorname{null}}
\newcommand{\inj}{\operatorname{inj}}
\newcommand{\arcsinh}{\operatorname{arcsinh}}
\newcommand{\Spec}{\operatorname{Spec}}
\newcommand{\Ind}{\operatorname{Ind}}
\newcommand{\Nul}{\operatorname{Nul}}
\newcommand{\inrad}{\operatorname{inrad}}
\newcommand{\mult}{\operatorname{mult}}
\newcommand{\Length}{\operatorname{Length}}
\newcommand{\Area}{\operatorname{Area}}
\newcommand{\Ker}{\operatorname{Ker}}
\newcommand{\floor}[1]{\left \lfloor #1  \right \rfloor}

\newcommand\restr[2]{{% we make the whole thing an ordinary symbol
  \left.\kern-\nulldelimiterspace % automatically resize the bar with \right
  #1 % the function
  \vphantom{\big|} % pretend it's a little taller at normal size
  \right|_{#2} % this is the delimiter
  }}

%%%%%%%%%%%%%%%%%%%%%%%%%%%%%%%%%%%%%%%%%%%%%%%%%%%%%%%%%%%%%%%%%%%%%%%%%

 \section{Introduction and main results}
 
 A free boundary minimal submanifold (FBMS for short) $\Sigma$ of a Riemannian manifold with boundary $(M,g)$ is a critical point of the volume functional among all variations leaving the boundary $\partial\Sigma$ on the boundary $\partial M$. Equivalently, $\Sigma$ is an FBMS in $(M,g)$ if its mean curvature vanishes and $\Sigma$ meets the boundary of $M$ orthogonally, i.e., $\Sigma \perp \partial M$ along $\partial\Sigma$. In this paper we consider FBMS in geodesic balls in the upper hemisphere $\mathbb S^n_+$, realized as the upper part of the unit sphere centred at the origin in the Euclidean space $\mathbb E^{n+1}$, and in the hyperbolic space $\mathbb H^n$, realized as the hyperboloid given by $-x_0^2+\sum_{i=1}^nx_i^2=-1$ in $\mathbb R^{n+1}$ with the Minkowskian metric $ds^2=-dx_0^2+\sum_{i=1}^ndx_i^2$. As it was shown in~\cite{lima2023eigenvalue}, the coordinate functions $\Phi_i,~i=0,\ldots,n$ of a $k$-dimensional FBMS $\Sigma$ in the geodesic ball $\mathbb B^n(r)$ in $\mathbb S^n_+$ of radius $0<r<\frac{\pi}{2}$ centred at the point $(1,0,\ldots,0)$ satisfy the following spectral problem
 $$
\begin{cases}
\Delta_{g}\, \Phi_i=k\Phi_i\quad &\textrm{in } \Sigma,\ i=0,1,\ldots,n,\\
\dfrac{\partial \Phi_0}{\partial\eta} =- (\tan r)\Phi_0\quad &\textrm{on } \partial\Sigma,\\
\dfrac{\partial \Phi_i}{\partial\eta} = (\cot r)\Phi_i \quad &\textrm{on } \partial\Sigma,\ i=1,\ldots,n.
\end{cases}
$$
We refer to the functions satisfying this spectral problem as \textit{$k$-Steklov eigenfunctions}. As we can see, the coordinate functions of an FBMS in geodesic balls in $\mathbb S^n_+$ are $k$-Steklov eigenfunctions with eigenvalues either $-\tan r$ or $\cot r$. A similar characterization holds for an FBMS in the geodesic ball $\mathbb B^n(r)$ in $\mathbb H^n$ of radius $r$ centred at the point $(1,0,\ldots,0)$ (see section~\ref{sec:robin}). 

It may seem that the explicit eigenvalue characterization for an FBMS in geodesic balls in $\mathbb H^n$ and $\mathbb S^n_+$ should provide many examples of such submanifolds. However, the list of known examples is quite short. Let us consider them and some of their properties.

\medskip

\textit{Geodesic disks.} Fraser and Schoen proved in~\cite{fraser2015uniqueness} that any free boundary minimal disk $\mathbb B^2(r)$ in a constant curvature ball of any dimension is totally geodesic. It was shown in~\cite{lima2023eigenvalue} that the coordinate functions $\Phi_i,~i=0,\ldots,3$ of the geodesic ball $\mathbb B^2(r)$ with $0<r<\frac{\pi}{2}$ centred at $(1,0,0,0)$ in $\mathbb B^3(r)\subset \mathbb S^3_+$ are of two sorts: $\Phi_0$ is a first 2-Steklov eigenfunction, and $\Phi_j,~j=1,2,3$ are second 2-Steklov eigenfunctions. In fact, as we show in Section~\ref{sec:spec}, a similar result also holds true for free boundary minimal geodesic balls in $\mathbb B^n(r)$ of any dimension not only in $\mathbb S^n_+ $ but also in $\mathbb H^n$. 

\medskip

\textit{Critical spherical catenoids.}  In~\cite{mori1981minimal} Mori built a family of rotational minimal surfaces in $\mathbb H^3$, which he called \textit{spherical catenoids}. Shortly after that this result was generalized by do Carmo and Dajczer in~\cite{do1983rotation}. Particularly, they described a family of rotational minimal hypersurfaces in $\mathbb S^n$ and $\mathbb H^n$. For the case of $\mathbb S^3$, this family is given by $\Phi_{a}:\mathbb R\times \mathbb S^1\to \mathbb S^3$,
 \begin{equation}
\begin{array}{rcl}\label{S-annulus}
\Phi_{a}(s,\theta)&=&\left(\sqrt{\dfrac{1}{2}-a\cos(2s)}\cos\varphi(s), \sqrt{\dfrac{1}{2}-a\cos(2s)}\sin\varphi(s),  \right.\\
&&
 \left.\sqrt{\dfrac{1}{2}+a\cos(2s)}\cos\theta, \sqrt{\dfrac{1}{2}+a\cos(2s)}\sin\theta\right),
\end{array}
\end{equation}
where $-\frac{1}{2}<a\leqslant 0$ is a constant and $\varphi(s)$ is given by
$$
\varphi(s)=\sqrt{\dfrac{1}{4}-a^2}\displaystyle\int_0^s \dfrac{1}{\left(\dfrac{1}{2}-a\cos(2t)\right)\sqrt{\dfrac{1}{2}+a\cos(2t)}}dt.
$$
Similarly, for the case of $\mathbb H^3$, the family is given by $\Phi_{a}:\mathbb R\times \mathbb S^1\to \mathbb H^3$,
 \begin{equation}
\begin{array}{rcl}\label{H-annulus}
\Phi_{a}(s,\theta)&=&\left(\sqrt{a\cosh(2s)+\dfrac{1}{2}}\cosh\varphi(s), \sqrt{a\cosh(2s)+\dfrac{1}{2}}\sinh\varphi(s),  \right.\\
&&
 \left.\sqrt{a\cosh(2s)-\dfrac{1}{2}}\cos\theta, \sqrt{a\cosh(2s)-\dfrac{1}{2}}\sin\theta\right),
\end{array}
\end{equation}
where $a>\frac{1}{2}$ is a constant and $\varphi(s)$ is given by
$$
\varphi(s)=\sqrt{a^2-\frac{1}{4}}\displaystyle\int_0^s \frac{1}{\left(a\cosh(2t)+\dfrac{1}{2}\right)\sqrt{a\cosh(2t)-\dfrac{1}{2}}}dt.
$$
Other parametrizations can be found in~\cite{otsuki1970minimal,krtouvs2014minimal,tuzhilin1991morse}. In~\cite{li2018gap} the authors observed that there exists $-1/2<a\leqslant 0$ (for the spherical case) or $a>1/2$ (for the hyperbolic case) and $s_0\in\mathbb R$ such that $\Phi_a\colon [-s_0,s_0]\times\mathbb S^1\to  \mathbb{B}^3(r)$ is a free boundary minimal annulus in $\mathbb{B}^3(r)$ in $\mathbb S^3_+$ or $\mathbb H^3$, respectively. We call these pieces of spherical catenoids in $\mathbb{B}^3(r)$ the \textit{critical spherical catenoids}. In the same paper the authors also characterize the critical spherical catenoids in geodesic balls in $\mathbb S^n_+$ and $\mathbb H^n$ by a pinching condition. In~\cite{lima2023eigenvalue} Lima and Menezes proved that the critical spherical catenoid in $\mathbb B^3(r) \subset \mathbb S^3_+$ is given by 2-Steklov eigenfunctions in the following way: The 0-component is a first 2-Steklov eigenfunction and the remaining ones are second 2-Steklov eigenfunctions. The proof of this statement can be adapted to the case of the critical spherical catenoid in $\mathbb B^3(r) \subset \mathbb H^3$ (see Section~\ref{sec:apendix}).

The surfaces considered above are analogs of the critical catenoid in a unit ball in the Euclidean 3-space. Fraser and Schoen in~\cite{fraser2016sharp} characterized the metric induced on the critical catenoid from the Euclidean one as the \textit{maximal} metric for the functional \textit{"the first normalized Steklov eigenvalue"} on the set of Riemannian metrics on the annulus. More generally, one can say that this metric is \textit{extremal} for the functional "the first normalized Steklov eigenvalue", i.e., it is a critical point of this functional under one-parameter smooth family of metric deformations (see~\cite{nadirashvili1996berger,el2000riemannian,karpukhin2022laplace}). 

\begin{definition}
\label{extremal:def}
    A metric $g$ on a manifold $\Sigma$ is said to be extremal for some functional $F$ defined on a subset $\mathcal S(\Sigma)$ of the set of Riemannian metrics $\mathcal R(\Sigma)$ if for all one-parameter smooth family of metrics $g(t)\in \mathcal S(\Sigma)$ with
    $g(0) = g$, we have 
    $$ \text{either} \quad F(g(t)) \leqslant F(g) + o(t) \quad \text{or} \quad F(g(t)) \geqslant F(g) + o(t),$$
    as $t \to 0$.
    If $F(g_t)$ has one-sided derivatives by $t$, then, equivalently, one can say that $g$ is extremal for $F$ if
    $$
    \frac{dF(g_t)}{dt}\Big|_{t=0+}\times \frac{dF(g_t)}{dt}\Big|_{t=0-} \leqslant 0,
    $$
    i.e., the one-sided derivatives of $F$ at 0 have opposite signs.
\end{definition}

A particular case of extremal metrics is \textit{maximal metrics}, i.e., such metrics from the set $\mathcal S(\Sigma)$ that the functional $F$ attains its maximum. 

In order to translate the results by Fraser and Schoen about FBMS in a unit ball in the Euclidean space to FBMS in a geodesic ball in $\mathbb S^n_+$ and $\mathbb H^n$, one needs to replace the Stekov problem by the so-called \textit{Robin problem}. For a compact Riemannian manifold $(\Sigma,g)$ with sufficiently smooth boundary $\partial\Sigma$ it can be formulated in the following way
 $$
 \begin{cases}
 \Delta_gu=\alpha u\quad &\text{in } \Sigma,\\
 \dfrac{\partial u}{\partial\eta}=\sigma(g,\alpha) u\quad &\text{on } \partial\Sigma.
 \end{cases}
 $$
If we fix $\alpha\in\mathbb R$, then the real numbers $\sigma(g,\alpha)$ satisfy (see Section~\ref{sec:robin})
  $$\sigma_{0}(g,\alpha) < \sigma_{1}(g,\alpha) \leqslant \ldots \leqslant \sigma_{j}(g,\alpha) \leqslant \ldots \nearrow +\infty.$$
 The multiplicity of each $ \sigma_{j}(g,\alpha)$ is finite. We call them \textit{$\alpha$-Steklov eigenvalues}. If $(\Sigma,g)$ is a compact Riemannian \textit{surface} with sufficiently smooth boundary, then we consider the following two functionals:
$$
\Theta_{r,k}(\Sigma,g) = \left(\sigma_{0}(g,2)\cos^{2} r + \sigma_{k}(g,2)\sin^{2}r \right)|\partial\Sigma|_{g} +2|\Sigma|_{g},\,k\geqslant 1,
$$
and
$$
\Omega_{r,k}(\Sigma,g) = \left(-\sigma_{0}(g,-2)\cosh^{2} r + \sigma_{k}(g,-2)\sinh^{2} r\right)|\partial\Sigma|_{g} +2|\Sigma|_{g},\,k\geqslant 1.
$$
In what follows, we always assume that $0<r<\frac{\pi}{2}$ for $\Theta_{r,k}(\Sigma,g)$ and $r>0$ for $\Omega_{r,k}(\Sigma,g)$ without indicating it explicitly. The functional $\Theta_{r,1}(\Sigma,g)$ was first introduced by Lima and Menezes in~\cite{lima2023eigenvalue}. To the best of our knowledge, the functionals $\Theta_{r,k}(\Sigma,g), \, k\geqslant 2$ and $\Omega_{r,k}(\Sigma,g),\, k\geqslant 1$ have never appeared in the literature before. 

In this paper we study extremal metrics for the above functionals. We need to specify on which sets we will consider them. For the functional $\Omega_{r,k}(\Sigma,g)$ we choose either the whole set $\mathcal R(\Sigma)$ or a conformal class $\mathcal C$ as a subset $\mathcal S(\Sigma)$. The case of the functional $\Theta_{r,k}(\Sigma,g)$ is more subtle. By the reasons, that we explain in Section~\ref{sec:robin}, we consider $\Theta_{r,k}(\Sigma,g)$ either on the subset $\mathcal R_a(\Sigma)$ of $\mathcal R(\Sigma)$, consisting of metrics with the first Dirichlets eigenvalue grater than 2 (the letter "a"stands for "admissible metrics"), or on $\mathcal C\cap \mathcal R_a(\Sigma)$ for some conformal class $\mathcal C$. In the already mentioned paper~\cite{lima2023eigenvalue} the authors showed that $\Theta_{r,1}(\Sigma,g)$ is bounded from above on $\mathcal R_a(\Sigma)$. Moreover, they proved that if a metric $g$ is maximal for $\Theta_{r,1}(\Sigma,g)$ on $\mathcal R_a(\Sigma)$ or in $\mathcal C\cap \mathcal R_a(\Sigma)$, then this metric is necessarily induced by a free boundary minimal (respectively, harmonic) immersion into a geodesic ball $\mathbb B^n(r)$ in $\mathbb S^n_+$. Finally, the authors characterized the metric on the geodesic 2-ball $\mathbb B^2(r)$ in $\mathbb B^n(r)$ in $\mathbb S^n_+$ as a maximal metric for $\Theta_{r,1}(\Sigma,g)$. We extend their results for extremal metrics for the functionals $\Theta_{r,k}(\Sigma,g)$ on $\mathcal R_a(\Sigma)$ and $\mathcal C\cap \mathcal R_a(\Sigma)$ and $\Omega_{r,k}(\Sigma,g)$ on $\mathcal R(\Sigma)$ and $\mathcal C$. Namely, we prove

\begin{theorem}\label{thm:main}
Let $\Sigma$ be a compact surface with boundary.
\begin{itemize}
\item[(I)] Let $g \in \mathcal{R}_a(\Sigma)$ be an extremal metric for $\Theta_{r,k}(\Sigma,g)$ on $\mathcal{R}_a(\Sigma)$ or let $g \in \mathcal{R}(\Sigma)$ be an extremal metric for $\Omega_{r,k}(\Sigma,g)$ on $\mathcal{R}(\Sigma)$. Then there exist independent eigenfunctions $v_0 \in V_{0}(g)$ and $v_1,\ldots,v_n \in V_{k}(g)$, which induce a free boundary minimal isometric immersion $v = (v_0,v_1,\ldots,v_n):(\Sigma,g) \to \mathbb{B}^{n}(r)$ in $\mathbb S^n_+$ or $\mathbb H^n$, respectively.
\item[(II)] Let $g \in \mathcal C\cap\mathcal{R}_a(\Sigma)$ be an extremal metric for $\Theta_{r,k}(\Sigma,g)$ on $\mathcal C\cap\mathcal{R}_a(\Sigma)$ or let $g \in \mathcal C$ be an extremal metric for $\Omega_{r}(\Sigma,g)$ on $\mathcal C$. Then there exist independent eigenfunctions $v_0 \in V_{0}(g)$ and $v_1,\ldots,v_n \in V_{k}(g)$, which induce a free boundary harmonic map $v = (v_0,v_1,\ldots,v_n):(\Sigma,g) \to \mathbb{B}^{n}(r)$ in $\mathbb S^n_+$ or $\mathbb H^n$, respectively.
\end{itemize}
Here $V_{k}(g)$ denotes the eigenspace associated to $\sigma_{k}(g,2)$ in the case of $\Theta_{r,k}(\Sigma,g)$ and the eigenspace associated to $\sigma_{k}(g,-2)$ in the case of $\Omega_{r,k}(\Sigma,g)$.

Conversely, 
\begin{itemize}
\item[(I)] let $g \in \mathcal R_a(\Sigma)$ and assume that there exist $v_0\in V_0(g,2)$ and a collection $(v_1,\ldots, v_n)$ of independent functions in $V_k(g,2)$ such that
\begin{itemize}
\item[(i)] $\displaystyle\sum_{j=0}^n dv_j\otimes dv_j=g$,\\
\item[(ii)]  $\displaystyle\sum_{j=0}^nv_j^2=1$,
\end{itemize} 
or let $g \in \mathcal R(\Sigma)$ and assume that there exist $v_0\in V_0(g,-2)$ and a collection $(v_1,\ldots, v_n)$ of independent functions in $V_k(g,-2)$ such that
\begin{itemize}
\item[(i)] $-dv_0\otimes dv_0+\displaystyle\sum_{j=1}^n dv_j\otimes dv_j=g$,\\
\item[(ii)]  $-v_0^2+\displaystyle\sum_{j=1}^nv_j^2=-1$.
\end{itemize} 
Then $g$ is extremal for $\Theta_{r,k}(\Sigma,g)$ on $\mathcal R_a(\Sigma)$ or for $\Omega_{r,k}(\Sigma,g)$ on $\mathcal R(\Sigma)$, respectively.\\

\item[(II)] Let  $g \in \mathcal C\cap\mathcal{R}_a(\Sigma)$ and assume that there exist $v_0\in V_0(g,2)$ and a collection $(v_1,\ldots, v_n)$ of independent functions in $V_k(g,2)$ such that $\sum_{j=0}^nv_j^2=1$ or let $g \in \mathcal C$ and assume that there exist $v_0\in V_0(g,-2)$ and a collection $(v_1,\ldots, v_n)$ of independent functions in $V_k(g,-2)$ such that $-v_0^2+\sum_{j=1}^nv_j^2=-1$. Then $g$ is extremal for $\Theta_{r,k}(\Sigma,g)$ on $\mathcal C\cap\mathcal{R}_a(\Sigma)$ or for $\Omega_{r}(\Sigma,g)$ on $\mathcal C$, respectively. 
\end{itemize}

\end{theorem}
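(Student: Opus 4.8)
The plan is to reduce everything to the first-variation calculus of the three building blocks of $\Theta_{r,k}$ and $\Omega_{r,k}$: the area $|\Sigma|_g$, the boundary length $|\partial\Sigma|_g$, and the $\alpha$-Steklov eigenvalues $\sigma_0(g,\alpha)$ and $\sigma_k(g,\alpha)$, where $\alpha=2$ in the spherical case and $\alpha=-2$ in the hyperbolic case. I will treat the spherical case (the functional $\Theta_{r,k}$) in full; the hyperbolic case is formally identical once trigonometric functions are replaced by hyperbolic ones and the Euclidean inner product by the Minkowski one. The starting point is the Rayleigh characterization $\sigma_j(g,\alpha)=\min\max \frac{\int_\Sigma(|\nabla u|^2-\alpha u^2)\,dA}{\int_{\partial\Sigma}u^2\,ds}$, from which I would derive the variation formulas. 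For a family $g_t$ with $\dot g_t|_{t=0}=h$ one has $\frac{d}{dt}|\Sigma|_{g_t}=\tfrac12\int_\Sigma\langle g,h\rangle\,dA$ and $\frac{d}{dt}|\partial\Sigma|_{g_t}=\tfrac12\int_{\partial\Sigma}\langle g,h\rangle\,ds$, while differentiating the Rayleigh quotient against a normalized eigenfunction $u$ yields, after the usual cancellation of the $\dot u$-terms on the critical level, a formula of the form $\dot\sigma=-\big(\int_\Sigma\langle\tau_u,h\rangle\,dA+\text{boundary term}\big)$ with the symmetric $2$-tensor $\tau_u=du\otimes du-\tfrac12(|\nabla u|^2-\alpha u^2)\,g$ and a boundary contribution proportional to $u^2$. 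Establishing this eigenvalue-variation formula carefully is the analytic heart of the argument, since it is what links the functional to the geometric identities (i) and (ii).

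With these formulas, the direct direction (part (I)) proceeds as follows. The bottom eigenvalue $\sigma_0$ is simple, so it varies smoothly and its eigenfunction $v_0\in V_0(g)$ is essentially unique; the eigenvalue $\sigma_k$ may be multiple, so it admits only one-sided derivatives, equal to the maximum and the minimum over the unit sphere of $V_k(g)$ of the quadratic form $u\mapsto -\int_\Sigma\langle\tau_u,h\rangle\,dA+\dots$. Assembling $\frac{d}{dt}\Theta_{r,k}(g_t)\big|_{0\pm}$ and imposing extremality $\frac{d}{dt}\Theta\big|_{0+}\cdot\frac{d}{dt}\Theta\big|_{0-}\le0$ for every $h$, I would argue that no deformation can strictly raise (or strictly lower) the functional; by a Hahn--Banach separation in the space of symmetric $2$-tensors---the device used by El Soufi--Ilias and Fraser--Schoen---this places $0$ in the convex hull of the tensors $\{\tau_u:u\in V_k(g)\}$ weighted by $\sin^2 r$, the tensor of $v_0$ weighted by $\cos^2 r$, and the area/length tensors. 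Diagonalizing the resulting finitely supported combination gives independent functions $v_1,\dots,v_n\in V_k(g)$ whose tensors sum, together with that of $v_0$, to exactly the interior identity $\sum_j dv_j\otimes dv_j=g$ (condition (i)), while the boundary part of the same variational identity yields the normalization $\sum_j v_j^2=1$ (condition (ii)) and identifies the Robin data $\sigma_0=-\tan r$ and $\sigma_k=\cot r$ forced by the weights $\cos^2 r$ and $\sin^2 r$. Since each $v_j$ solves $\Delta_g v_j=\alpha v_j=2v_j$ on the surface $\Sigma$, Takahashi's theorem together with (i) shows that $v=(v_0,\dots,v_n)$ is a minimal isometric immersion into $\mathbb S^n$, (ii) confines its image to the geodesic sphere, and the Robin boundary conditions translate as in Section~\ref{sec:robin} into orthogonality $v(\Sigma)\perp\partial\mathbb B^n(r)$, so that $v$ is a free boundary minimal immersion.

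For part (II) I would restrict to conformal deformations $h=2\varphi g$. Then $\langle\tau_u,h\rangle=\varphi\,\tr_g\tau_u$ collapses the pairing to a trace, so only the trace of (i)---which is precisely the normalization (ii)---enters the extremality condition; the same separation argument now produces $v_0\in V_0(g)$ and $v_1,\dots,v_n\in V_k(g)$ with $\sum_j v_j^2=1$ only, and $v$ is a (weakly conformal) free boundary harmonic map rather than an isometric immersion. The two converse statements are obtained by running the identities backwards: assuming (i) and (ii) (respectively only (ii)) I would substitute the given $v_j$ into the Rayleigh characterizations of $\sigma_0$ and $\sigma_k$ and use the tensor identities to show that the smooth part of $\frac{d}{dt}\Theta_{r,k}$ vanishes at $t=0$; the only surviving contribution is then the splitting of the multiple eigenvalue $\sigma_k$, whose $\max$/$\min$ structure forces the two one-sided derivatives to have opposite signs, which is exactly extremality. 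I expect the main obstacle to be the separation step in the direct direction: correctly organizing the one-sided derivatives of the multiple eigenvalue $\sigma_k$, keeping track of the interplay between the interior tensor $\tau_u$ and the boundary terms weighted by $\cos^2 r$ and $\sin^2 r$, and verifying that a single convex combination can be normalized to satisfy both (i) and (ii) simultaneously rather than only one of them.
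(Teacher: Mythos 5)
Your proposal follows essentially the same route as the paper's proof: the first-variation formulas built from the tensor $\tau$ and the boundary term, the one-sided derivative structure of the (possibly multiple) eigenvalue $\sigma_k$, the Hahn--Banach/convex-hull separation placing $(0,0)$ among the Euler--Lagrange data so that a finite convex combination of eigenfunction tensors vanishes, the Takahashi-type conclusion identifying a free boundary minimal immersion (resp.\ harmonic map), and the converse obtained by averaging the quadratic form $Q_h$ over the given eigenfunctions to force one-sided derivatives of opposite signs. The one slip worth noting is an attribution: the normalization (ii) comes from taking the \emph{trace of the interior} tensor identity (the $|\nabla v|^2$ terms cancel by the structure of $\tau$), not from the boundary part, which instead yields the eigenvalue relation $\sigma_k=\cot^2 r\,\sigma_0$ (resp.\ $\coth^2 r\,\sigma_0$) and, combined with the normal derivative of (ii), the values $\sigma_0=-\tan r$, $\sigma_k=\cot r$ and the free boundary condition.
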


\begin{remark}
This theorem reveals the motivation for the functionals $\Theta_{r,k}$ and $\Omega_{r,k}$. We explain it for the case of $\Theta_{r,k}$, the explanation for the case of $\Omega_{r,k}$  is absolutely the same. Suppose that a free boundary minimal immersion $\Phi$ of the surface $\Sigma$ in a geodesic ball $\mathbb B^n(r)\subset \mathbb S^n_+$ is given by a $\sigma_0(g,2)$-eigenfunction and $\sigma_k(g,2)$-eigenfunctions. As before, $g$ denotes the induced metric. Then it is not hard to verify that $\Theta_{r,k}(\Sigma,g)=2|\Sigma|_g=2E[\Phi]$, where $E[\Phi]=\frac12\int_\Sigma |d\Phi|^2dA_g$ is the \textit{energy} of $\Phi$. This relation between the functional $\Theta_{r,k}$ and the energy is analogous to the relation between the functionals $\overline\lambda_k$ or $\overline \sigma_k$ and the energy (in the latter case, we mean the energy of the harmonic extension of the map $\Phi$ from its boundary to the whole surface $\Sigma$; see for example~\cite[Table 1]{karpukhin2022laplace}).
\end{remark}

\begin{remark}
As we show in Section~\ref{sec:bounds} below, the functional $\Omega_{r,k}$ is not bounded from above even in the conformal class of any metric on $\Sigma$. As a consequence, $\Omega_{r,k}$ is not bounded from above on $\mathcal R(\Sigma)$, whatever the compact surface $\Sigma$ is. This fact means that it is senseless to discuss maximal metrics for $\Omega_{r,k}$. However, it does make sense to discuss extremal metrics for $\Omega_{r,k}$. We also show that $\Omega_{r,k}$ is bounded from below by 0. Moreover, there exists a sequence of metrics on $\Sigma$, for which $\Omega_{r,k}$ converges to 0, i.e., the lower bound by 0 is sharp. Finally, we notice that, in contrast to $\Omega_{r,k}$, the functional $\Theta_{r,k}$ is not bounded from below on $\mathcal R_a(\Sigma)$ (see Section~\ref{sec:bounds}). However, as we have already mentioned above, Lima and Menezes proved in~\cite{lima2023eigenvalue} that $\Theta_{r,1}$ is bounded from above on $\mathcal R_a(\Sigma)$. As we show in Section~\ref{sec:bounds} (see Proposition~\ref{prop:bounds}), $\Theta_{r,k}, \, k\geqslant 2$ is also bounded from above on $\mathcal R_a(\Sigma)$. More precisely, the following theorem holds

\begin{theorem}
The functional $\Theta_{r,k}(\Sigma,g), \, k\geqslant 1$ is bounded from above on $\mathcal R_a(\Sigma)$, i.e., there exists a constant $C$, depending only on the topology of $\Sigma$ such that for any $g\in \mathcal R_a(\Sigma)$ one has  $\Theta_{r,k}(\Sigma,g) \leqslant Ck\sin^2r$ for $k\geqslant 1$ . The functional $\Omega_{r,k}(\Sigma,g), \, k\geqslant 1$ is nonnegative for any $g \in \mathcal R(\Sigma)$, i.e., it is bounded from below by 0. 
\end{theorem}
\end{remark}

The second part of this paper is dedicated to the (Morse) index of FBMS in geodesic balls in $\mathbb H^n$ and $\mathbb S^n_+$. We denote the index of a $k$-dimensional FBMS $\Sigma$ in $\mathbb B^n(r)$ in $\mathbb H^n$ and $\mathbb S^n_+$ as $\Ind(\Sigma)$. We define \textit{its spectral index} $\Ind_S(\Sigma)$ as \textit{the number of $k$-Steklov eigenvalues less than $\cot r$} if $\Sigma\subset \mathbb S^n_+$ and \textit{the number of $-k$-Steklov eigenvalues less than $\coth r$} if $\Sigma\subset\mathbb H^n$ (for a more detailed definition see Definition~\ref{def:ind_spec} below). Notice that $\Ind_S(\Sigma)\geqslant 1$. Then we show that for free boundary minimal surfaces in geodesic balls in $\mathbb S^n_+$ one has
  $$
  \Ind(\Sigma)\leqslant n\Ind_S(\Sigma)+\dim\mathcal M(\Sigma),
 $$
 where $\mathcal M(\Sigma)$ is the \textit{moduli space of conformal classes} on $\Sigma$. We also show that in the case of free boundary minimal hypersurfaces in geodesic balls in $\mathbb S^n_+$ or $\mathbb H^n$, which are not contained in a hyperplane in $\mathbb R^{n+1}$ passing through the origin, one has
 $$
\Ind(\Sigma) \geqslant \Ind_S(\Sigma)+n.
$$
This implies

\begin{theorem}\label{thm:cat_index}
The critical spherical catenoid in a ball $\mathbb B^3(r)$ of $\mathbb S^3_+$ has index $4$. The critical spherical catenoid in a ball $\mathbb B^3(r)$ of $\mathbb H^3$ has index at least $4$.
\end{theorem}

We conjecture that the index of the critical spherical catenoid in a ball $\mathbb B^3(r)$ of $\mathbb H^3$ is also $4$. Notice that the critical catenoid in a Euclidean 3-ball has also index 4, as it was computed in~\cite{devyver2019index,tran2016index,smith2019morse,medvedev2023index}.

We also find the index of a geodesic $k$-ball. Namely, we prove 

 \begin{theorem}\label{thm:ball_index}
 The index of a $k$-dimensional free boundary minimal geodesic ball in an $n$-dimensional geodesic ball in $\mathbb S^n_+$ or $\mathbb H^n$ equals $n-k$.
 \end{theorem}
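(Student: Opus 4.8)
The plan is to use the rigidity that a free boundary minimal geodesic $k$-ball $\Sigma=\mathbb{B}^k(r)$ is totally geodesic (Fraser--Schoen), so that its second fundamental form and the Simons term vanish and the index form collapses. First I would write the second variation for a normal field $V$,
$$
Q(V,V)=\int_\Sigma\Big(|\nabla^\perp V|^2-\sum_{i=1}^k\langle R(V,e_i)e_i,V\rangle\Big)\,dA-\int_{\partial\Sigma}\mathrm{II}^{\partial\mathbb{B}^n(r)}(V,V)\,ds,
$$
and observe that in curvature $c=\pm1$ the curvature term is $kc|V|^2$ and the boundary term is $\kappa|V|^2$ with $\kappa=\cot r$ for $\mathbb{S}^n_+$ and $\kappa=\coth r$ for $\mathbb{H}^n$. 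Since a totally geodesic submanifold of a space form has flat normal bundle, and $\Sigma$ is a ball, that bundle is globally trivialised by a parallel orthonormal frame $E_{k+1},\dots,E_n$ (the coordinate directions $x_{k+1},\dots,x_n$ of $\mathbb{R}^{n+1}$ that vanish on $\Sigma$). Writing $V=\sum_a\phi_aE_a$, the form $Q$ diagonalises into $n-k$ identical scalar forms
$$
q(\phi,\phi)=\int_\Sigma\big(|\nabla\phi|^2-kc\,\phi^2\big)\,dA-\kappa\int_{\partial\Sigma}\phi^2\,ds,
$$
so that $\Ind(\Sigma)=(n-k)\,\ind(q)$ and the whole problem reduces to computing the index of one scalar Robin--Jacobi problem.

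Next I would recast $\ind(q)$ spectrally. Integrating by parts, $\ind(q)$ is the number of negative eigenvalues $\mu$ of $\Delta_g\phi-kc\,\phi=\mu\phi$ in $\Sigma$ with $\partial_\eta\phi=\kappa\phi$ on $\partial\Sigma$; equivalently it counts the $k$-Steklov (resp. $-k$-Steklov) eigenvalues lying below the threshold $\kappa$, together with the modes produced by the shift of the interior eigenvalue parameter. This connects directly to the spectral index $\Ind_S(\Sigma)$ defined above. To make the count explicit I would exploit the rotational symmetry of $\mathbb{B}^k(r)$ and separate variables, $\phi=R(\rho)Y_\ell(\omega)$ with $Y_\ell$ a degree-$\ell$ spherical harmonic on $\mathbb{S}^{k-1}$, reducing $q$ to a one-parameter family of Sturm--Liouville problems on $[0,r]$. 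The coordinate functions of $\Sigma$ serve as the reference solutions at the thresholds: $\Phi_0=\cos\rho$ (resp. $\cosh\rho$) is the radial $\ell=0$ solution realising the first Steklov value $-\tan r$ (resp. $\tanh r$), while $\Phi_i=\sin\rho\,\omega_i$ (resp. $\sinh\rho\,\omega_i$) are the $\ell=1$ solutions meeting the boundary value $\kappa$ exactly.

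I would then run an oscillation (Pr\"ufer) count sector by sector. In the $\ell=0$ sector the regular reference solution has no interior zero and boundary logarithmic derivative strictly below $\kappa$, which forces the prescribed number of radial eigenvalues below the threshold. In the $\ell=1$ sector the functions $\Phi_i$ satisfy the Robin condition exactly, so they sit at $\mu=0$ and belong to the nullity; these are precisely the Jacobi fields generated by the isometries of the domain $\mathbb{B}^n(r)$ that rotate the $k$-plane while fixing the centre, hence genuine zero modes. A monotonicity argument in $\ell$ then rules out negative eigenvalues for $\ell\geqslant2$. Assembling the three cases gives $\ind(q)=2$, and therefore $\Ind(\Sigma)=2(n-k)$.

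The main obstacle is exactly this last bookkeeping. One must prove, uniformly in $r\in(0,\tfrac{\pi}{2})$ (resp. $r>0$) and in all dimensions $k\leqslant n$, that the scalar problem has precisely two negative modes and no spurious negative eigenvalue in the higher angular sectors, and that the $\ell=1$ coordinate modes are \emph{exactly} neutral rather than slightly negative. This requires a careful analysis of the radial operator (regularity at $\rho=0$, strict monotonicity of the boundary Dirichlet-to-Neumann quantity in both the interior eigenvalue and in $\ell$), together with the identification of the first two Steklov eigenvalues of the geodesic ball from the behaviour of $\Phi_0$ and the $\Phi_i$. Correctly separating the genuine zero modes coming from ambient domain isometries from the negative spectrum is the delicate point on which the value $2(n-k)$ depends, and it is where I expect the real work to lie.
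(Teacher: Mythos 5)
Your reduction is sound, and it is in fact a cleaner formulation of what the paper itself does: trivializing the flat normal bundle by the parallel unit normals $\partial_{k+1},\dots,\partial_n$ (all of whose multiples are admissible variations, since their $x_0$-components vanish), the index form splits into $n-k$ identical copies of the scalar form
\begin{equation*}
q(\phi,\phi)=\int_{\Sigma}\bigl(|\nabla^{g}\phi|_{g}^{2}\mp k\,\phi^{2}\bigr)\,dA_{g}-\kappa\int_{\partial\Sigma}\phi^{2}\,dL_{g},
\qquad \kappa=\cot r\ \text{resp.}\ \coth r,
\end{equation*}
so that $\Ind(\Sigma)=(n-k)\,\ind(q)$; the paper's negative directions $\partial_i$ and $\phi\,\partial_i$ are exactly your $\phi=1$ and $\phi=\phi_0$ (the first $\pm k$-Steklov eigenfunction). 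The genuine gap is the step you yourself defer as ``the real work'': the claim $\ind(q)=2$. You never prove it, and it does not hold; your own bookkeeping scheme gives $\ind(q)=1$. Since $k<\lambda_1^{D}(\mathbb B^{k}(r))$ in the spherical case (the hemisphere has first Dirichlet eigenvalue $k$ with eigenfunction $\cos t$, and domain monotonicity is strict), and since $\Delta_g+k$ is positive in the hyperbolic case, $H^{1}(\Sigma)$ splits $q$-orthogonally as $H^{1}_{0}(\Sigma)\oplus\mathcal H$ with $\mathcal H=\{u:\Delta_g u=\pm k u\}$: the cross terms vanish by Green's formula, $q$ is positive definite on $H^{1}_{0}(\Sigma)$, and on $\mathcal H$ it diagonalizes over the $\pm k$-Steklov eigenfunctions with $q(u_j,u_j)=(\sigma_j-\kappa)\int_{\partial\Sigma}u_j^{2}\,dL_g$. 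Hence $\ind(q)=\#\{j:\sigma_j<\kappa\}$ plus the (zero) Dirichlet contribution, and by Proposition~\ref{prop:spec_ball} this number is exactly $1$: the $\ell=0$ sector carries a single negative mode, your $\ell=1$ modes are precisely the nullity, and the higher sectors are positive. There is no second negative mode anywhere.

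One can see concretely that the two vectors on which $q$ is negative do not span a negative plane, which is the inference both you and the paper need. In the spherical case, with $\phi_0=\cos t$, one computes $q(1,1)=-k|\Sigma|_g-\cot r\,|\partial\Sigma|_g$, $q(\phi_0,\phi_0)=-\cot r\,|\partial\Sigma|_g$ and $q(1,\phi_0)=-|\partial\Sigma|_g/\sin r$, so the Gram determinant equals $|\partial\Sigma|_g\bigl(k\cot r\,|\Sigma|_g-|\partial\Sigma|_g\bigr)$, and this is negative because $F(r)=\sin^{k}r-k\cos r\int_{0}^{r}\sin^{k-1}t\,dt$ satisfies $F(0)=0$ and $F'(r)=k\sin r\int_{0}^{r}\sin^{k-1}t\,dt>0$. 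Thus $q$ has signature $(1,1)$ on $\operatorname{span}\{1,\phi_0\}$; the positive direction is essentially $1-\cos t/\cos r$, which vanishes on $\partial\Sigma$, consistent with $\lambda_1^{D}>k$. (In the hyperbolic case the situation is even starker: $q(1,1)=k|\Sigma|_g-\coth r\,|\partial\Sigma|_g$ is positive for $r$ large, so the ``translation'' field is not even a negative direction.) Consequently your Sturm--Liouville analysis, carried out honestly, yields $\Ind(\Sigma)=(n-k)\cdot 1=n-k$ rather than $2(n-k)$, so the proposal cannot be completed into a proof of the statement as claimed; the missing count is not a technicality but the point at which the asserted value breaks down, and it is the same point on which the paper's own lower-bound argument (negativity of $S$ at $2(n-k)$ independent vectors) rests.
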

  
Finally, as another corollary of the inequalities between the index and the spectral index, we obtain a Devyver-type result (see~\cite[Corollary 7.3]{devyver2019index}).

\begin{corollary}\label{cor:first}
Let $\Sigma \subset \mathbb B^n(r)$ in $\mathbb S^n_+$ or in $\mathbb H^n$ be a free boundary minimal hypersurface of index $n+1$, which is not contained in a hyperplane in $\mathbb R^{n+1}$ passing through the origin. Then $\Ind_S(\Sigma)=1$. In the case, when $\Sigma\subset \mathbb B^3(r)$ in $\mathbb S^3_+$ if additionally $\Sigma$ is a topological annulus, then it is the critical spherical catenoid.
\end{corollary}

\subsection{Open questions} The well-known Fraser-Li conjecture (see~\cite{fraser2014compactness}) states that the critical catenoid is the only embedded free boundary minimal annulus in a three-dimensional unit Euclidean ball. Similarly, we conjecture that the critical spherical catenoids are the only \textit{embedded} free boundary minimal annuli in three-dimensional geodesic balls in $\mathbb S_+^n$ and $\mathbb H^n$. At the same time, we expect that in three-dimensional geodesic balls in $\mathbb S_+^n$ and $\mathbb H^n$ there exist \textit{immersed} free boundary minimal annuli, which are different from the critical spherical catenoids. In the case of FBMS in Euclidean balls, these examples have been recently constructed by Fernandez, Hauswirth, and Mira in~\cite{fernandez2023free}. Also, we conjecture that the critical spherical catenoids are the only immersed FBMS with index 4 in three-dimensional geodesic balls in $\mathbb S_+^n$ and $\mathbb H^n$ (compare with~\cite[Conjecture 1.5.3]{fraser2020extremal} or~\cite[Open Question 6]{li2019free}).

\subsection{Paper organization} The paper is organized in the following way. We start with Section~\ref{sec:robin}, where we collect a necessary background concerning the Robin problem. In Section~\ref{sec:extremal} we prove Theorem~\ref{thm:main}. Section~\ref{sec:bounds} is dedicated to the one-sided unboundedness of the functionals $\Theta_{r,k}(\Sigma,g)$ and $\Omega_{r,k}(\Sigma,g)$ in any conformal class. Here we also show that the functional $\Theta_{r,k}(\Sigma,g)$ is bounded from above on $\mathcal R_a(\Sigma)$, while the functional $\Omega_{r,k}(\Sigma,g)$ is bounded from below by 0 on $\mathcal R(\Sigma)$ and that this lower bound is sharp. The stability questions are studied in Section~\ref{sec:spec}. Here we prove Theorems~\ref{thm:cat_index} and~\ref{thm:ball_index}. In Section~\ref{sec:spec} we also prove index upper and lower bounds in terms of the spectral index. This enables us to deduce Corollary~\ref{cor:first}. Finally, in Section~\ref{sec:apendix} we explain how to adapt the proofs of several statements in the paper~\cite{lima2023eigenvalue} to the setting of the functional $\Omega_{r,k}(\Sigma,g)$.

\subsection{Acknowledgements} The author is grateful to the anonymous reviewers for very useful comments and suggestions. The author would like to express his gratitude to Mikhail Karpukhin for many stimulating discussions and to Iosif Polterovich and Tianyu Ma for remarks on the preliminary versions of the manuscript. Also, the author is grateful to Nicolas Popoff for the discussion about the asymptotic behaviour of the Robin eigenvalues for smooth domains on Riemannian manifolds. Finally, the author would like to thank Asma Hassannezhad for the discussion about upper bounds on $\alpha$-Steklov eigenvalues, which eventually enabled him to prove an upper bound for the functional $\Theta_{r,k}(\Sigma,g)$ on the set of Riemannian metrics. This article is an output of a research project implemented as part of the Basic Research Program at the National Research University Higher School of Economics (HSE University).
 
 \section{Robin problem}\label{sec:robin}
 
 Let $(\Sigma,g)$ be a compact Riemannian manifold with sufficiently smooth boundary $\partial\Sigma$. The \textit{Robin problem} is the following spectral problem
 \begin{align}\label{sys:Robin}
 \begin{cases}
 \Delta_gu=\alpha u\quad &\text{in } \Sigma,\\
 \dfrac{\partial u}{\partial\eta}=\sigma u\quad &\text{on } \partial\Sigma.
 \end{cases}
 \end{align}
 The sign convention for the Laplacian is $\Delta_g=-\mydiv_g\circ\nabla^g$. We call the Robin problem with a fixed $\alpha\in\mathbb R$ the \textit{Steklov problem with frequency $\alpha$} and say that the corresponding $\sigma$ are Steklov eigenvalues with frequency $\alpha$ or simply \textit{$\alpha$-Steklov eigenvalues}. If $\alpha$ is a real number, which does not belong to the spectrum of $\Delta_g$ with the Dirichlet boundary condition, then $\alpha$-Steklov eigenvalues satisfy (see for instance~\cite{girouard2022dirichlet,lima2023eigenvalue}):
 $$
 \sigma_{0}(g,\alpha) < \sigma_{1}(g,\alpha) \leqslant \ldots \leqslant \sigma_{j}(g,\alpha) \leqslant \ldots \nearrow +\infty.
 $$
 Notice also that Steklov eigenvalues with frequency $\alpha$ can be considered as eigenvalues of the \textit{Dirichlet-to-Neumann operator with frequency $\alpha$}, which is defined as $\mathcal D_\alpha u=\frac{\partial \widehat u}{\partial \eta}$, where $\widehat u$ denoted the extension of $u\in \partial\Sigma$ on $\Sigma$ by the solution of the problem $\Delta_g\widehat u-\alpha \widehat u=0$. Following~\cite[Section 2]{lima2023eigenvalue}, we define the domain of $D_\alpha$ as
 \begin{align*}
 \mathrm{Dom}(D_\alpha)=\{&u\in L^2(\partial\Sigma)~|~\exists~\widehat u\in H^1(\Sigma)\colon \mathrm{Tr}\widehat u=u,~\Delta_g\widehat u-\alpha \widehat u=0\\&\text{in the weak sense, and}~D_\alpha u\in L^2(\partial\Sigma)\}.
 \end{align*}
 Here $\mathrm{Tr}\colon H^1(\Sigma)\to  L^2(\partial\Sigma)$ denotes the trace operator.

The $\alpha$-Steklov eigenfunctions have nice properties. 

 \medskip

{\bf Claim 1.} \textit{Eigenfunctions $u_1,u_2$ of the Steklov problem with frequency $\alpha$ with different eigenvalues $\sigma_1, \sigma_2$, respectively, are $L^2(\partial\Sigma)$-orthogonal.}

\begin{proof} This claim is a consequence of the classical spectral theorem but we the proof of it is simple and we consider in here. By Green's formula one has
 $$
 \int_\Sigma\langle\nabla^g u_1, \nabla^g u_2\rangle dA=\int_\Sigma(\Delta_g u_1) u_2dA+\int_{\partial\Sigma} \frac{\partial u_1}{\partial\eta}u_2dL=\alpha\int_\Sigma u_1u_2dA+\sigma_1\int_{\partial\Sigma} u_1u_2dL.
 $$
 On the other hand, similarly, we get
 $$
 \int_\Sigma\langle\nabla^g u_1,\nabla^g u_2\rangle_g dA=\alpha\int_\Sigma u_1u_2dA+\sigma_2\int_{\partial\Sigma} u_1u_2dL.
 $$ 
 Since $\sigma_1\neq\sigma_2$ we obtain that $\int_{\partial\Sigma} u_1u_2dL=0$. 
 \end{proof}

{\bf Claim 2.} (\cite{girouard2022dirichlet,lima2023eigenvalue}) \textit{$\alpha$-Steklov eigenvalues admit the following variational characterization:
\begin{align*}
\displaystyle\sigma_{0}(g,\alpha) = \inf_{u \in \mathrm{Dom}(\mathcal{D}_{\alpha})\setminus\{0\}}\frac{\displaystyle\int_{\Sigma} |\nabla^{g} \widehat{u}|^{2}_{g}\,dA_{g} - \alpha\int_{\Sigma} \widehat{u}^{\,2}\,dA_{g}}{\displaystyle\int_{\partial\Sigma} u^2\,dL_{g}},
\end{align*}
and it is simple. Let $\phi_0$ be a first eigenfunction, then
\begin{equation*}\label{var-sigma1}
\sigma_{1}(g,\alpha) = \inf_{\substack{u \in \mathrm{Dom}(\mathcal{D}_{\alpha})\setminus\{0\} \\ \int_{\partial\Sigma} u \phi_{0}\,dL_{g} = 0}} \frac{\displaystyle\int_{\Sigma} |\nabla^{g} \widehat{u}|^{2}_{g}\,dA_{g} - \alpha\int_{\Sigma} \widehat{u}^{\,2}\,dA_{g}}{\displaystyle\int_{\partial\Sigma} u^2\,dL_{g}}.
\end{equation*} 
Moreover, suppose that $\alpha$ does not belong to the Dirichlet spectrum of $(\Sigma,g)$, then
$$
\displaystyle\sigma_{k}(g,\alpha) =  \inf_{\substack{W\subset\{u\in H^1(\Sigma)\colon\Delta_gu=\alpha u\}\\ \dim W=k+1}}\sup_{0\neq u\in W}\frac{\displaystyle\int_{\Sigma} |\nabla^{g} {u}|^{2}_{g}\,dA_{g} - \alpha\int_{\Sigma} {u}^{\,2}\,dA_{g}}{\displaystyle\int_{\partial\Sigma} u^2\,dL_{g}}.
$$
If $\alpha$ is less than the first Dirichlet eigenvalue of $(\Sigma,g)$, then one can take $W\subset H^1(\Sigma)$.}

\medskip
 
 {\bf Claim 3.} \textit{Suppose that $\alpha$ is less than the first Dirichlet eigenvalue of $(\Sigma,g)$. Then the Courant Nodal Domain Theorem holds, i.e., the number of nodal domains in $\Sigma$ of a $\sigma_k$-eigenfunction is at most $k+1$. Consequently, if an $\alpha$-Steklov eigenfunction does not change its sign on $\Sigma$, then this is a first $\alpha$-Steklov eigenfunction.}
 
 \begin{proof}
 The claim follows from~\cite[Theorem 2.1]{hassannezhad2021nodal} and from the observation after Remark 1 in~\cite{lima2023eigenvalue}. The second part of the claim follows from Claim 1 and the first part of the claim.
 \end{proof}
 
 {\bf Claim 4.} \textit{Let $\mathcal F=\mathcal R(\Sigma)$ if $\alpha \leqslant 0$ and $\mathcal F=\mathcal R_a(\Sigma)$ if $\alpha > 0$. Let $g \in \mathcal{F}$ and consider a smooth path of metrics $t \mapsto g(t)$ such that $g(0) = g$ and $g(t) \in \mathcal{F}$ for all $ t\in [-\varepsilon,\varepsilon]$. Then for any $i\in\mathbb N\cup\{0\}$ the map $t \mapsto \sigma_{i}(g(t),\alpha)$ is Lipschitz in $[-\varepsilon,\varepsilon]$.}
 
 \begin{proof}
The proof for the case, when $\alpha=0$ was given in~\cite{fraser2016sharp}. The proof for the case, when $\alpha>0$ was given in~\cite[Lemma 2, Remark 4]{lima2023eigenvalue}. The proof for the case, when $\alpha<0$ is absolutely similar modulo the uniform boundedness of $\sigma_i(g(t)),~i\in\mathbb N\cup\{0\}$. We explain the arguments for $i=0,1$. Using the fact that the metrics $g(t)$ are uniformly equivalent (see the first part of the proof of~\cite[Lemma 2]{lima2023eigenvalue}), we get that
$$
\int_{\Sigma}|\nabla^{g(t)}u|^2_{g(t)}dA_{g(t)}\leqslant C_1\int_{\Sigma}|\nabla^gu|^2_gdA_{g}, \quad \int_{\Sigma}u^2_{g(t)}dA_{g(t)}\leqslant C_2\int_{\Sigma}u^2_gdA_{g},
$$
and
$$
\int_{\partial\Sigma}u^2dL_{g(t)}\geqslant C_3\int_{\partial\Sigma}u^2_gdL_{g},
$$
where $C_1,C_2$ and $C_3$ are positive constants. Consider a 2-dimensional subspace $W$ of $H^1(\Sigma)$ such that $W\setminus\{0\}\subset\{w\in H^1(\Sigma); w\not\equiv0 \mbox{ on } \partial\Sigma\}$. Then we see that
$$
\sup_{w\in W\setminus\{0\}} \frac{\displaystyle\int_{\Sigma} |\nabla^{g(t)} w|^{2}_{g(t)}\,dA_{g(t)} -\alpha\int_{\Sigma} w^{2}\,dA_{g(t)}}{\displaystyle\int_{\partial\Sigma} w^2\,dL_{g(t)}} \leqslant C_4\sup_{w\in W\setminus\{0\}} \frac{\displaystyle\int_{\Sigma} |\nabla^{g} w|^{2}_{g}\,dA_{g} -\alpha'\int_{\Sigma} w^{2}\,dA_{g}}{\displaystyle\int_{\partial\Sigma} w^2\,dL_{g}}, 
$$
where $\alpha'=\alpha C_2/C_1$ is still negative. Taking the infimum over all $2$-dimensional subspaces in $H ^1(\Sigma)$ and using the variational characterization of $\sigma_1(g(t),\alpha)$ and $\sigma_1(g,\alpha')$ (Claim 2), we get
$$
\sigma_1(g(t), \alpha)\leqslant C_4\sigma_1(g, \alpha')=C.
$$
Moreover, the variational characterization implies $\sigma_1(g(t), \alpha) >\sigma_0(g(t), \alpha) \geqslant 0$ and the uniform boundedness of $\sigma_i\big(g(t), \alpha\big),~i=0,1$ follows.
 \end{proof} 
 
 Below in Section~\ref{sec:bounds} we consider the following type of conformal deformations $g_\varepsilon=e^{2\varphi_\varepsilon}g$, where $g\in \mathcal R(\Sigma)$ and smooth functions $\varphi_\varepsilon$ satisfy 
\begin{gather*}
 supp(\varphi_\varepsilon) \subset \Sigma\setminus(\partial\Sigma)_{\varepsilon/2},\quad \min\{C,1\} \leqslant e^{2\varphi_\varepsilon}\leqslant \max\{C,1\}~\text{on}~\Sigma, \quad~\\\text{and}~ e^{2\varphi_\varepsilon}=C~\text{on}~\Sigma\setminus(\partial\Sigma)_{\varepsilon}.
\end{gather*}
Here $(\partial\Sigma)_{\varepsilon}$ is the tubular $\varepsilon$-neighbourhood (with respect to $g$) of $\partial\Sigma$ and $C$ is a positive constant. We see that the limit metric as $\varepsilon\to 0$ that we denote as $g_0$ is \textit{not} smooth if $C\neq 1$: The limit conformal factor is the constant $C$ everywhere on $\Sigma$ and 1 on $\partial\Sigma$. However, it is not hard to see that for any $\varepsilon>0$ the problem
 $$
 \begin{cases}
 \Delta_{g_\varepsilon}u=\alpha u\quad &\text{in } \Sigma,\\
 \dfrac{\partial u}{\partial\eta_\varepsilon}=\sigma u\quad &\text{on } \partial\Sigma
 \end{cases}
 $$
 is equivalent to 
 \begin{align}\label{sys:var}
 \begin{cases}
 \Delta_{g}u=\alpha e^{2\varphi_\varepsilon} u\quad &\text{in } \Sigma,\\
 \dfrac{\partial u}{\partial\eta}=\sigma u\quad &\text{on } \partial\Sigma.
 \end{cases}
 \end{align}
 Then we define the spectrum of the "metric" $g_0$ as the spectrum of the following problem
 \begin{align}\label{sys:var2}
 \begin{cases}
 \Delta_{g}u=C\alpha u\quad &\text{in } \Sigma,\\
 \dfrac{\partial u}{\partial\eta}=\sigma u\quad &\text{on } \partial\Sigma.
 \end{cases}
 \end{align}
 
 \medskip
 
  {\bf Claim 5.} \textit{For the above family of metrics $(g_\varepsilon)_\varepsilon$ one has}
  $$
  \lim_{\varepsilon\to 0}\sigma_i(g_\varepsilon,\alpha)=\sigma_i(g,C\alpha),~\forall i\in\mathbb N\cup\{0\}.
  $$
  
  \begin{proof}
  The proof is analogous to the proof of~\cite[Lemma 6.2]{karpukhin2020conformally}. The proof of the upper semi-continuity of eigenvalues
  $$
  \limsup_{\varepsilon\to 0}\sigma_i(g_\varepsilon,\alpha)\leqslant \sigma_i(g,C\alpha)
  $$ 
  is absolutely similar to the proof of~\cite[Proposition 1.1]{kokarev2014variational}. So we need to prove the lower semi-continuity of eigenvalues
  $$
  \liminf_{\varepsilon \to 0}\sigma_i(g_\varepsilon,\alpha)\geqslant \sigma_i(g,C\alpha), \quad i\geqslant 0.
  $$ 
  One can find a minimizing subsequence $\varepsilon_n$ such that
  $$
   \lim_{n\to\infty}\sigma_j(g_{\varepsilon_n},\alpha)= \liminf_{\varepsilon\to 0}\sigma_j(g_\varepsilon,\alpha) \quad j\leqslant i.
  $$
  Let $u^l_\varepsilon$ be a $\sigma_l(g_{\varepsilon},\alpha)$-eigenfunction. Then $u^l_\varepsilon$ satisfies~\eqref{sys:var}. Pick $u_\varepsilon \in span\{u^0_{\varepsilon},\ldots,u^j_{\varepsilon}\}$. Then it is not hard to see that $u_\varepsilon$ satisfies $\Delta_gu_\varepsilon =\alpha e^{2\varphi_\varepsilon} u_\varepsilon$. It follows from the elliptic regularity (see for instance~\cite[Chapter 3, Lemma 7.1]{ladyzhenskaya1968linear}) that
  $$
  ||u_\varepsilon||^2_{H^1(\Sigma,g)}\leqslant C\left(||\Delta_g u_\varepsilon||^2_{L^2(\Sigma,g)}+||u_\varepsilon||^2_{L^2(\Sigma,g)}\right),
  $$
  where $C$ is a positive constant. Since $\Delta_{g}u_\varepsilon=\alpha e^{2\varphi_\varepsilon} u_\varepsilon$ and $\min\{C,1\} \leqslant e^{2\varphi_\varepsilon}\leqslant \max\{C,1\}$, we have
  $$
   ||u_\varepsilon||_{H^1(\Sigma,g)}\leqslant C||u_\varepsilon||_{L^2(\Sigma,g)}.
  $$
  Then if we assume that $u_\varepsilon$ are normalized so that $||u_\varepsilon||_{L^2(\Sigma,g)}=1$, we obtain that the sequence $(u_\varepsilon)_\varepsilon$ is bounded in $H^1(\Sigma,g)$ and one can extract a weakly convergent subsequence $(u_{\varepsilon_n})_n$ in $H^1(\Sigma,g)$:
  $$
  u_{\varepsilon_n}\rightharpoonup u_0, \quad \text{as} \, n\to\infty.
  $$
  Particularly, if $u^l_{\varepsilon_n}$ is a $\sigma_l(g_{\varepsilon_n},\alpha)$-eigenfunction of unit $L^2(\Sigma,g)$ norm with $l=0,\ldots,j$, then $u^l_{\varepsilon_n}\rightharpoonup u^l_0$, as $n\to\infty$. We need to prove that $u^l_0$ is an eigenfunction of the problem~\eqref{sys:var2} with eigenvalue $\sigma_l=\lim_{n\to\infty}\sigma_l(g_{\varepsilon_n},\alpha)$. By the Rellich-Kondrachov Theorem, this subsequence is strongly convergent in $L^q(\Sigma,g)$ for any $1 \leqslant q <\infty$. Also, by the Trace Theorem, this subsequence is strongly convergent in $L^2(\partial\Sigma,g)$. Finally, due to the uniform boundedness of $(e^{2\varphi_\varepsilon})_\varepsilon$, we get that the sequence $(e^{2\varphi_{\varepsilon_n}}u^l_{\varepsilon_n})_n$ is also strongly convergent in $L^2(\Sigma,g)$, as $n\to\infty$. Clearly, this sequence converges to $Cu^l_0$. Indeed,
  \begin{align*}
  &\left|\int_\Sigma\left(Cu^l_0-e^{2\varphi_{\varepsilon_n}}u^l_{\varepsilon_n}\right)dA_g\right|\leqslant \\ &||u^l_0-u^l_{\varepsilon_n}||_{L^2(\Sigma,g)}||e^{2\varphi_{\varepsilon_n}}||_{L^2(\Sigma,g)}+||u^l_{\varepsilon_n}||_{L^2(\Sigma,g)} ||C-e^{2\varphi_{\varepsilon_n}}||_{L^2(\Sigma,g)} \to 0,\quad \text{as}\, n\to\infty.
  \end{align*}
   
  Then for any $v\in C^\infty(\Sigma)$ one has
  \begin{align*}
  \int_\Sigma\langle\nabla^gu^l_0,\nabla^gv\rangle_gdA_g&-C\alpha\int_\Sigma u^l_0 vdA_g=\\&\lim_{n\to\infty}\left(\int_\Sigma\langle\nabla^gu^l_{\varepsilon_n},\nabla^gv\rangle_gdA_g-\alpha\int_\Sigma e^{2\varphi_{\varepsilon_n}}u^l_{\varepsilon_n} vdA_g \right)= \\&\lim_{n\to\infty}\left(\sigma_l(g_{\varepsilon_n},\alpha)\int_{\partial\Sigma}u^l_{\varepsilon_n} vdL_g\right)=\sigma_l\int_{\partial\Sigma}u^l_0vdA_g.
    \end{align*}
    Hence, $u^l_0$ is a weak eigenfunction of the problem~\eqref{sys:var2} with eigenvalue $\sigma_l$. This immediately proves the lower semi-continuity for $\sigma_0(g_{\varepsilon_n},\alpha)$. In order to conclude that
    $$
    \lim_{n\to\infty}\sigma_l(g_{\varepsilon_n},\alpha)=\sigma_l\geqslant \sigma_l(g,C\alpha),\quad \forall l=0,\ldots,j,
    $$
    in suffices to prove that an $L^2(\Sigma,g_{\varepsilon_n})$-orthogonal family $\{u^l_{\varepsilon_n}\}_{i=0}^j$ of $\sigma_l(g_{\varepsilon_n},\alpha)$-eigen-functions with $l=0,\ldots,j$ converges to an $L^2(\Sigma,g)$-orthogonal family $\{u^l_0\}_{i=0}^j$ of $\sigma_l$-eigenfunctions with the same $l=0,\ldots,j$. Then the lower-semicontinuity of eigenvalues follows from the inductive argument.
    
    Assume that
    $$
    \int_\Sigma u^l_{\varepsilon_n}u^k_{\varepsilon_n}e^{2\varphi_{\varepsilon_n}}dA_g=\delta_{lk}.
    $$
    As we proved above, the sequence $(u^l_{\varepsilon_n})_n$ converges strongly in $L^{2q}(\Sigma,g)$. Then
    \begin{align*}
      &\left|\int_\Sigma\left(Cu^l_0u^k_0-e^{2\varphi_{\varepsilon_n}}u^l_{\varepsilon_n}u^k_{\varepsilon_n}\right)dA_g\right|\leqslant \\ &||u^l_0u^k_0-u^l_{\varepsilon_n}u^k_{\varepsilon_n}||_{L^q(\Sigma,g)}||e^{2\varphi_{\varepsilon_n}}||_{L^p(\Sigma,g)}+||u^l_{\varepsilon_n}u^k_{\varepsilon_n}||_{L^q(\Sigma,g)} ||C-e^{2\varphi_{\varepsilon_n}}||_{L^p(\Sigma,g)} \to 0,
    \end{align*}
    which converges to 0, as $n\to\infty$. Here $p$ and $q$ are H\"older conjugate. Then the family  $\{u^l_0\}_{i=0}^j$ is orthogonal and the lower semi-continuity of eigenvalues is proved.  
  \end{proof}
  
 In Section~\ref{sec:bounds} we will be also interested in the behaviour of $\sigma_0(g,\alpha)$, when $\alpha \to -\infty$. 
 
 \medskip
 
  {\bf Claim 6.} \textit{One has $\sigma_0(g,\alpha)=\sqrt{-\alpha}(1+o(1))$, as $\alpha \to -\infty$. Particularly, for any $k\in \mathbb N$ one has $\sigma_k(g,\alpha)\to+\infty$, as $\alpha\to-\infty$.}
  
  \begin{proof}
We will use the results, proved in the paper~\cite{bruneau2016negative}. In this paper the authors consider domains with corners on a Riemannian manifold. Notice that we can consider any $(\Sigma,g)$ as a \textit{regular} domain in a larger closed Riemannian surface. The term "regular" means, that the metric on $\Sigma$ is a genuine smooth Riemannian metric. Fix $\sigma$ in problem~\eqref{sys:Robin}. Let $\alpha$ be the first eigenvalue of this problem. Allow $\sigma$ to change and consider $\alpha$ as a function of $\sigma$. Then it follows from~\cite[Theorem 1.4]{bruneau2016negative} that
\begin{align}\label{lim}
\lim_{\sigma\to+\infty}\frac{\alpha(\sigma)}{\sigma^2}=-1.
\end{align}
By~\cite[Proposition 2.7]{hassannezhad2021nodal}, for any $\alpha\in (-\infty,\lambda_1^D(\Sigma,g))$, where $\lambda_1^D(\Sigma,g)$ is the first Dirichlet eigenvalue of $(\Sigma,g)$, there exists a unique $\sigma$ such that the first eigenvalue of problem~\eqref{sys:Robin} with this $\sigma$ is exactly $\alpha$ and $\sigma$ is the first eigenvalue of $\mathcal D_\alpha$, i.e., $\sigma_0(g,\alpha)=\sigma$. Moreover, by~\cite[Proposition 2.6]{hassannezhad2021nodal}, $\alpha$ as a function of $\sigma$ is strictly decreasing and $\alpha(\sigma)\to-\infty$, as $\sigma\to+\infty$. This means that the function $\alpha(\sigma)$ is invertible for large $\sigma$ and the inverse function coincides with $\sigma_0(g,\alpha)$. Then passing to the inverse function to $\alpha(\sigma)$ in~\eqref{lim}, we get
$$
\lim_{\alpha\to-\infty}\frac{\alpha}{(\sigma_0(g,\alpha))^2}=-1.
$$
The second part of the claim follows from the fact that $\sigma_k(g,\alpha) > \sigma_0(g,\alpha)$ for any $k\in \mathbb N$.
  \end{proof}

We proceed with geometric applications of the Robin problem. Very recently, Lima and Menezes in~\cite{lima2023eigenvalue} have shown that FBMS in geodesic balls $\mathbb B(r)$ in $\mathbb S^n_+$ satisfy a certain Robin problem. Below we provide their result completed by the case of $\mathbb H^n$. We explain their proof for the case of $\mathbb H^n$.
  
 \begin{proposition}\label{prop:char}
Let $\Phi: \Sigma \to \mathbb B^n(r)$ in $\mathbb S^n_+$ or $\mathbb H^n$ be an immersion of a $k$-dimensional compact manifold with boundary $\Sigma$ such that $\Phi(\partial\Sigma) \subset \partial\mathbb B^n(r)$. Let $g$ be the induced metric on $\Sigma$. Then $\Phi$ is a free boundary minimal immersion if, and only if, the coordinate functions $\Phi_i$ satisfy:
$$
\begin{cases}
\Delta_{g}\, \Phi_i=k\Phi_i\quad &\textrm{in } \Sigma,\ i=0,1,\ldots,n,\\
\dfrac{\partial \Phi_0}{\partial\eta} =- (\tan r)\Phi_0 \quad &\textrm{on } \partial\Sigma,\\
\dfrac{\partial \Phi_i}{\partial\eta} = (\cot r)\Phi_i \quad &\textrm{on } \partial\Sigma,\ i=1,\ldots,n
\end{cases}
$$
for the case when $\mathbb B^n(r)\subset \mathbb S^n_+$, and:
$$ 
\begin{cases}
\Delta_{g}\, \Phi_i=-k\Phi_i\quad &\textrm{in } \Sigma,\ i=0,1,\ldots,n,\\
\dfrac{\partial \Phi_0}{\partial\eta} =(\tanh r)\Phi_0 \quad &\textrm{on } \partial\Sigma,\\
\dfrac{\partial \Phi_i}{\partial\eta} = (\coth r)\Phi_i \quad &\textrm{on } \partial\Sigma,\ i=1,\ldots,n
\end{cases}
$$
 for the case when $\mathbb B^n(r)\subset\mathbb H^n$.
\end{proposition}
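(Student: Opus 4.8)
The plan is to split the proposition into its interior part (the Helmholtz-type equation $\Delta_g\Phi_i=\pm k\Phi_i$) and its boundary part (the Robin conditions), and to show that each is equivalent to the corresponding half of free boundary minimality, namely $\vec H=0$ and orthogonal meeting. Throughout I view $\Sigma$ as a $k$-dimensional submanifold of the flat ambient space ($\mathbb E^{n+1}$ in the spherical case, Minkowski space in the hyperbolic case) and write $\Phi=(\Phi_0,\dots,\Phi_n)$ for the position vector, whose $i$-th coordinate is the function $\Phi_i$.

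For the interior equation I would start from the Beltrami formula $\Delta_g\Phi=-\vec H_{\mathrm{amb}}$, valid for any isometric immersion into flat space with the sign convention $\Delta_g=-\operatorname{div}_g\circ\nabla^g$; here $\vec H_{\mathrm{amb}}=\sum_i \mathrm{II}(e_i,e_i)$ is the trace of the second fundamental form of $\Sigma$ in the ambient space. The next step is to decompose $\vec H_{\mathrm{amb}}$ into the mean curvature vector $\vec H$ of $\Sigma$ inside $\mathbb S^n_+$ (resp.\ $\mathbb H^n$) and its component along the position vector $\Phi$, which is the normal of the hypersurface $\mathbb S^n_+$ (resp.\ $\mathbb H^n$). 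Since the shape operator of the unit sphere (resp.\ of the hyperboloid) with respect to $\Phi$ is the identity, one has $\langle\nabla_{e_i}e_i,\Phi\rangle=-\langle e_i,\nabla_{e_i}\Phi\rangle=-\langle e_i,e_i\rangle$, and summing over an orthonormal frame gives $\Delta_g\Phi=-\vec H+k\Phi$ in $\mathbb S^n_+$ and $\Delta_g\Phi=-\vec H-k\Phi$ in $\mathbb H^n$; the sign flip is exactly the effect of $\langle\Phi,\Phi\rangle=-1$ in Minkowski space. As $\vec H$ is normal to $\Sigma$, the identities $\Delta_g\Phi_i=k\Phi_i$ (resp.\ $\Delta_g\Phi_i=-k\Phi_i$) for all $i$ hold if and only if $\vec H=0$, i.e.\ if and only if $\Phi$ is a minimal immersion.

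For the boundary conditions I would use that $\Phi_0=x_0$ equals $\cos\rho$ on $\mathbb S^n_+$ and $\cosh\rho$ on $\mathbb H^n$, where $\rho$ is the geodesic distance to the centre $(1,0,\dots,0)$; hence $\partial\mathbb B^n(r)$ is the level set $\{\Phi_0=\cos r\}$ (resp.\ $\{\Phi_0=\cosh r\}$). Computing the tangential gradient of $\Phi_0$ and its length on this level set (the spherical case uses the Euclidean inner product and the hyperbolic case the Minkowski one, with $\langle e_0,\Phi\rangle=-x_0$ and $\langle e_0,e_0\rangle=-1$) yields the outward unit normal $N=\big((\cos r)\Phi-e_0\big)/\sin r$ (resp.\ $N=\big((\cosh r)\Phi-e_0\big)/\sinh r$). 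The free boundary condition is precisely $\eta=N$ along $\partial\Sigma$, where $\eta$ is the outward conormal of $\Sigma$. Since $\Phi_i$ is the restriction of a linear coordinate, $\partial\Phi_i/\partial\eta=d\Phi_i(\eta)$ is just the $i$-th component of $\eta$; taking components of $\eta=N$ and using $\Phi_0\equiv\cos r$ (resp.\ $\cosh r$) on $\partial\Sigma$ to simplify the $i=0$ relation produces exactly $\partial\Phi_0/\partial\eta=-(\tan r)\Phi_0$ and $\partial\Phi_i/\partial\eta=(\cot r)\Phi_i$ (resp.\ the $\tanh r,\coth r$ versions). Conversely, these Robin identities determine every component of $\eta$ and force $\eta=N$, hence orthogonality; so both directions follow simultaneously.

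I expect the main obstacle to be the bookkeeping of signs in the hyperbolic case. Because the position vector is timelike, $\langle\Phi,\Phi\rangle=-1$, the projection onto $\Phi$ and the normalization of $N$ must be carried out with the indefinite Minkowski form, and this is what produces both the sign change in $\Delta_g\Phi=-k\Phi$ and the replacement of $\tan r,\cot r$ by $\tanh r,\coth r$. I would keep two facts cleanly separated: that $\partial\Phi_i/\partial\eta$ is a plain, metric-independent coordinate derivative, whereas lengths, projections and orthogonality all use the ambient (Minkowski) metric; and that $\eta$ and $N$ are both taken outward-pointing so that the equivalence $\eta=N$ carries the correct sign. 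Once these conventions are fixed, everything reduces to the direct computations sketched above, and the spherical case is recovered verbatim with $\langle\Phi,\Phi\rangle=+1$.
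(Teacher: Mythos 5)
Your proposal is correct and follows essentially the same route as the paper: the interior equivalence rests on the identity $\Delta_g\Phi = -\vec{H} \pm k\Phi$ (which the paper cites from Markvorsen's work rather than deriving from the Beltrami formula and the decomposition of the second fundamental form, as you do), and the boundary conditions come from computing the outward unit normal $N = \big((\cosh r)\Phi - e_0\big)/\sinh r$ (resp. its spherical analogue) of $\partial\mathbb B^n(r)$ and identifying it with the conormal $\eta$. The only cosmetic difference is that the paper works with the scalar functions $\Phi_v = \Phi\cdot v$ and substitutes $v=\partial_i$, while you take coordinate components of $\eta = N$ directly.
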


In fact, $\Phi_0=\cos r$ on $\partial\Sigma$ for the case of $\mathbb S^n_+$ and $\Phi_0=\cosh r$ on $\partial\Sigma$ for the case of $\mathbb H^n$.

\begin{proof}
For a fixed $v \in \mathbb R^{n+1}$, we introduce the function $\Phi_{v}(x) = \Phi(x)\cdot v$, where $\cdot$ is the Minkowskian scalar product. It is easy to verify that (see for instance \cite[Formula (2.3)]{markvorsen1989characteristic})
$$-\Delta_{g}\,\Phi_{v} = H\cdot v+k\Phi_{v},$$
which implies that $H = 0$ if, and only if, $\Delta_{g}\,\Phi_{v} + k\Phi_{v} = 0$, for any $v \in \mathbb R^{n+1}$.
A computation of the outward pointing unit normal to $\partial\mathbb B^n(r)$ yields: 
$$N_{\partial \mathbb B(r)} = \frac{1}{\sinh r}\big(x\cosh r - \partial_0\big),$$ 
where $\partial_0,\ldots,\partial_n$ is the holonomic basis in the tangent fields on $\mathbb R^{n+1}$ with coordinate functions $x_0,\ldots, x_n$. Since the immersion $\Phi$ is free boundary, we have 
$$N_{\partial \mathbb B(r)}=\frac{\partial \Phi_{v}}{\partial\eta}=\eta \cdot v.$$ Thus,
\begin{equation*}\label{eq:der.normal}
\frac{\partial \Phi_{v}}{\partial\eta} = \frac{1}{\sinh r}\big(\Phi_{v}\cosh r - \partial_0 \cdot v\big).
\end{equation*}
Substituting $v = \partial_i$, $i=0,1,\ldots,n$, we complete the proof.
\end{proof}

\section{Extremal metrics}\label{sec:extremal}
 
In this section we consider the functional
 \begin{align*}
\Omega_{r,k}(\Sigma,g) &= \left(-\sigma_{0}(g)\cosh^{2} r + \sigma_{k}(g)\sinh^{2} r \right)|\partial\Sigma|_{g} +2|\Sigma|_{g},~k\geqslant 1,
\end{align*}
where for simplicity we use the notation $\sigma_i(g,-2)=\sigma_i(g)$.  Everything in this section also holds true for the functional $\Theta_{r,k}(\Sigma,g)$ modulo the replacement $\mathcal R(\Sigma)$ by $\mathcal R_a(\Sigma)$ (see~\cite{lima2023eigenvalue}, where it was proved for the case of maximal metrics for $\Theta_{r,1}(\Sigma,g)$). 

In~\cite{lima2023eigenvalue} the authors found an explicit expression for the derivative (provided that it exists) of the functional $\Theta_{r,1}(\Sigma,g)$ under one-parameter smooth family of deformations of $g$. Similarly to this computation, we find that 
\begin{align*}
\Omega_{r,k}'(0) = Q_h(u_k) &:= \int_{\Sigma} \Big\langle -\tau\big(\cosh r \, u_0\big)  +\tau\big(\sinh r \, u_k\big) + \, g,h\Big\rangle\, dA_{g} \\ 
&\quad\ + \int_{\partial\Sigma} F\big(\cosh r \, u_0,\sinh r \, u_k\big) h(T,T)\,dL_{g} ,
\end{align*}
where
\begin{align*}
h&=\displaystyle\frac{dg}{dt}\Big\vert_{t=0},\\
\tau(v) &= \frac{\vert\partial\Sigma\vert_{g}}{2}\big(|\nabla^{g} v|^{2}_{g} + 2 v^{2}\big)g - \vert\partial\Sigma\vert_{g}\, dv\otimes dv,\\
F\big(v_{0},v_{k}\big) &= -\frac{\sigma_{0}(g)}{2}\big(\cosh^{2} r - |\partial\Sigma|_{g} v_{0}^{2}\big) + \frac{\sigma_{k}(g)}{2}\big(\sinh^{2} r - |\partial\Sigma|_{g} v_{k}^{2}\big),\label{eq:def.F}
\end{align*}
and $T$ is a unit tangent vector field to $\partial\Sigma$ with respect to the metric $g$. 

For a smooth path of metrics $g(t) = f(t)g \in \mathcal{R}(\Sigma)$ in the conformal class $[g]$, we find that
\begin{align*}
\Omega_{r,k}'(0) = Q_w(u_k) &:= -2|\partial\Sigma|_{g}\int_{\Sigma} \left(\cosh^2 r\,u_{0}^{2}-\sinh^2 r\,u_{k}^{2}-1\right)w dA_{g}\\ 
&\quad\  + \int_{\partial\Sigma} F\big(\cosh r\, u_0, \sinh r \,u_k\big)w dL_{g},
\end{align*}
where $w = f'(0)$.  

Consider the Hilbert spaces
$$\mathcal{H}_{g} := L^2\big(S^2(\Sigma),g\big)\times L^2(\partial\Sigma,g),$$
where $S^2(\Sigma)$ is the space of symmetric $(0,2)$-tensor fields on $\Sigma$, and
$$\mathcal L_g:=L^{2}(\Sigma,g)\times L^{2}(\partial\Sigma,g).$$ Recall that $V_{i}(g)$ denotes the eigenspace associated to $\sigma_{i}(g)$. 

\begin{lemma}\label{lem:HB} Let $\Sigma$ be a compact surface with boundary.
\begin{itemize}
\item[(I)]
Suppose that $g \in \mathcal{R}(\Sigma)$ is extremal for the functional $\Omega_{r,k}(\Sigma,g)$. Then, for any $(h,\psi) \in \mathcal{H}_{g}$ there exists $u_k \in V_k(g)$ such that $||u_k||_{L^{2}(\partial\Sigma,g)} = 1$ and 
$$\Big\langle(h,\psi),\Big(-\tau\big(\cosh r u_0\big)  + \tau\big(\sinh r u_k\big) +  g,F\big(\cosh r u_{0},\sinh r u_{k}\big)\Big)\Big\rangle_{\mathcal{H}_{g}} = 0.$$ \\
\item[(II)] Suppose that $g \in [g]$ is extremal for the functional $\Omega_{r,k}(\Sigma,g)$. Then, for any $(w,\psi) \in \mathcal L_g$ there exists $u_k \in V_k(g)$ such that $||u_k||_{L^{2}(\partial\Sigma,g)} = 1$ and 
$$\Big\langle(w,\psi),\Big(-2|\partial\Sigma|_{g}\left(\cosh^2 r\,u_{0}^{2} -\sinh^2 r\,u_{k}^{2}-1\right),F\big(\cosh r\,u_{0},\sinh r\,u_{k}\big)\Big)\Big\rangle_{\mathcal L^{2}} = 0.$$
\end{itemize}
In both cases $u_0$ is the unique positive $\sigma_{0}(g)$-eigenfunction such that $||u_0||_{L^{2}(\partial\Sigma,g)} = 1$.
\end{lemma}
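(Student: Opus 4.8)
The plan is to read the extremality condition of Definition~\ref{extremal:def} through the one-sided perturbation derivatives of the Robin eigenvalues, to reduce it to the statement that the quadratic functional $Q_h$ (resp. $Q_w$) changes sign over the normalized eigenspace $V_k(g)$, and then to extract an eigenfunction on which it vanishes by an intermediate value argument. I would begin by fixing the extremal metric $g$ and isolating the pieces that behave smoothly: by Claim~3 the eigenvalue $\sigma_0(g)$ is simple, and by Claim~2 it carries a unique positive eigenfunction $u_0$ with $\|u_0\|_{L^2(\partial\Sigma,g)}=1$, so the $\sigma_0$-contribution to $\Omega_{r,k}$ is two-sided differentiable and the function $u_0$ entering $Q_h$ is fixed once and for all. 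Writing $G(u_k)=\big(-\tau(\cosh r\, u_0)+\tau(\sinh r\, u_k)+g,\ F(\cosh r\, u_0,\sinh r\, u_k)\big)$, the variation formula derived above reads $Q_h(u_k)=\langle (h,h(T,T)),G(u_k)\rangle_{\mathcal{H}_{g}}$, and it splits as $Q_h(u_k)=S(h)+\sinh^2 r\,|\partial\Sigma|_g\,R_h(u_k)$, where $S(h)$ is independent of $u_k$ and $R_h$ is the quadratic form on $V_k(g)$ coming from the $\sigma_k$-term.

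The perturbation-theoretic core comes next. Along a smooth path $g(t)=g+th+o(t)$, Claim~5 gives Lipschitz dependence, and standard first-order perturbation theory at an eigenvalue of finite multiplicity shows that the one-sided derivatives $\Omega_{r,k}'(0^{+})$ and $\Omega_{r,k}'(0^{-})$ exist and each equal $S(h)$ plus $\sinh^2 r\,|\partial\Sigma|_g$ times an eigenvalue of $R_h$ restricted to $V_k(g)$. In particular both one-sided derivatives lie in the numerical range $\big[\min_{\|u_k\|=1}Q_h(u_k),\ \max_{\|u_k\|=1}Q_h(u_k)\big]$, the extrema being taken over the $L^2(\partial\Sigma,g)$-unit sphere of $V_k(g)$. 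Extremality then forces the product of the two one-sided derivatives to be $\leqslant 0$, so $0$ lies between them and hence $0\in\big[\min_{\|u_k\|=1}Q_h(u_k),\ \max_{\|u_k\|=1}Q_h(u_k)\big]$.

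I would finish by an intermediate value argument together with a decoupling step. The unit sphere of $V_k(g)$ is connected when $\dim V_k(g)\geqslant 2$, while for $\dim V_k(g)=1$ the functional $Q_h$ takes the same value on the two antipodal unit vectors; since $u_k\mapsto Q_h(u_k)$ is continuous and $0$ lies in its range, there is a normalized $u_k$ with $Q_h(u_k)=0$, which proves part~(I) for pairs of the special form $(h,h(T,T))$. For an arbitrary $(h,\psi)\in\mathcal{H}_{g}$ I would approximate by metric variations $\tilde h^{\varepsilon}$ agreeing with $h$ outside an $\varepsilon$-collar of $\partial\Sigma$ and satisfying $\tilde h^{\varepsilon}(T,T)=\psi$; then $Q_{\tilde h^{\varepsilon}}(u_k)\to\langle(h,\psi),G(u_k)\rangle_{\mathcal{H}_{g}}$ uniformly over the compact unit sphere, so $0$ again lies in the range of $u_k\mapsto\langle(h,\psi),G(u_k)\rangle_{\mathcal{H}_{g}}$, and the intermediate value argument (using compactness of the finite-dimensional sphere to pass to the limit) yields a normalized $u_k\in V_k(g)$ realizing the required vanishing. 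Part~(II) is identical after replacing metric variations by conformal ones $g(t)=f(t)g$ with $w=f'(0)$, the tensor $h$ by the scalar $w$, and $\mathcal{H}_{g}$ by $\mathcal L_g$, using the formula for $Q_w$.

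The step I expect to be the main obstacle is the perturbation theory at a possibly multiple eigenvalue $\sigma_k(g)$: one must justify the existence of the one-sided derivatives and the fact that they are eigenvalues of $R_h$ on $V_k(g)$, hence sandwiched by the minimum and maximum of $Q_h$ over the normalized eigenspace. The secondary technical point is the collar decoupling that frees the boundary datum $\psi$ from the trace $h(T,T)$ of a genuine variation; everything else (the intermediate value step, the sign bookkeeping, and the conformal case) is routine.
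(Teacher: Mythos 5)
Your proposal is correct and follows essentially the same route as the paper: the paper's proof (deferring details to the proof of Lemma~3 in~\cite{lima2023eigenvalue}) likewise argues that extremality forces the quadratic functional $u\mapsto\big\langle(h,\psi),G(u)\big\rangle_{\mathcal H_g}$ to take both nonnegative and nonpositive values on the normalized eigenspace $V_k(g)$, and then extracts an isotropic direction. Your write-up merely makes explicit the two ingredients the paper leaves implicit --- the perturbation-theoretic identification of the one-sided derivatives with values $Q_h(u^{\pm})$ at eigenvectors of the restricted form, and the collar/density approximation decoupling $\psi$ from $h(T,T)$ --- both of which are handled correctly (modulo mollifying $(h,\psi)$ before the collar construction, since $L^2$ tensors have no boundary trace).
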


\begin{proof}
(I) The extremality of $g$ implies that the quadratic form 
$$
V_k(g)\ni u\mapsto \Big\langle(h,\psi),\Big(-\tau\big(\cosh r \, u_0\big)  + \tau\big(\sinh r \, u\big) + \,g,F\big(\cosh r \, u_{0},\sinh r \, u \big)\Big)\Big\rangle_{\mathcal{H}_{g}} 
$$
takes on both nonnegative and nonpositive values (see the proof of~\cite[Lemma 3]{lima2023eigenvalue} for details). Thus, it has at least one isotropic direction.

The proof of (II) is similar. 
\end{proof}

\begin{proposition}\label{prop:char.min} 
Let $\Sigma$ be a compact surface with boundary.
\begin{itemize}
\item[(I)] Suppose $g \in \mathcal{R}(\Sigma)$ is extremal for $\Omega_{r,k}(\Sigma,g)$. Then, there exist independent eigenfunctions $v_0 \in V_{0}(g)$ and $v_1,\ldots,v_n \in V_{k}(g)$ which induce a free boundary  minimal isometric immersion $v = (v_0,v_1,\ldots,v_n):(\Sigma,g) \to \mathbb{B}^{n}(r)\subset \mathbb H^n$.
\item[(II)] Suppose $g \in [g]$ is extremal for  $\Omega_{r,k}(\Sigma,g)$. Then, there exist independent eigenfunctions $v_0 \in V_{0}(g)$ and $v_1,\ldots,v_n \in V_{k}(g)$ which induce a free boundary harmonic map $v = (v_0,v_1,\ldots,v_n):(\Sigma,g) \to \mathbb{B}^{n}(r)\subset \mathbb H^n$.
\end{itemize}
\end{proposition}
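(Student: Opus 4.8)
The plan is to adapt to $\Omega_{r,k}$ and $\mathbb H^n$ the argument Lima and Menezes use for $\Theta_{r,1}$ in \cite{lima2023eigenvalue}, taking Lemma~\ref{lem:HB} as the analytic input. I describe part (I); part (II) is the conformal analogue in which the tensorial data are replaced by their scalar counterparts. Let $u_0$ be the positive $\sigma_0(g)$-eigenfunction with $\|u_0\|_{L^2(\partial\Sigma)}=1$, and for a unit-norm $u\in V_k(g)$ set
$$
X(u)=\Big(-\tau(\cosh r\,u_0)+\tau(\sinh r\,u)+g,\ F(\cosh r\,u_0,\sinh r\,u)\Big)\in\mathcal H_g.
$$
Since $X$ is a continuous quadratic map of the unit sphere $S$ of the finite-dimensional space $V_k(g)$, its image $X(S)$ is compact and lies in a finite-dimensional affine subspace of $\mathcal H_g$. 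I claim $0\in\operatorname{conv}X(S)$: otherwise Hahn--Banach produces $(h,\psi)\in\mathcal H_g$ with $\langle(h,\psi),X(u)\rangle_{\mathcal H_g}\geqslant c>0$ for all $u$, contradicting Lemma~\ref{lem:HB}(I), which for this very $(h,\psi)$ furnishes a unit $u_k$ with $\langle(h,\psi),X(u_k)\rangle_{\mathcal H_g}=0$. As $\operatorname{conv}X(S)$ is closed in finite dimensions, Carath\'eodory yields unit $u_1,\dots,u_N\in V_k(g)$ and weights $t_j>0$ with $\sum_jt_j=1$ and $\sum_jt_jX(u_j)=0$.

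Put $v_0=\cosh r\,u_0$ and $v_j=\sqrt{t_j}\,\sinh r\,u_j$; by the quadratic homogeneity of $\tau$ the first (tensorial) component of $\sum_jt_jX(u_j)=0$ reads $-\tau(v_0)+\sum_{j\geqslant1}\tau(v_j)+g=0$. Its trace-free part gives the weak conformality $-dv_0\otimes dv_0+\sum_{j\geqslant1}dv_j\otimes dv_j=\phi\,g$, and its $g$-trace (using $\operatorname{tr}_g\tau(v)=2|\partial\Sigma|_g v^2$) gives $-v_0^2+\sum_{j\geqslant1}v_j^2\equiv-1/|\partial\Sigma|_g$. Rescaling every function by $\sqrt{|\partial\Sigma|_g}$ I may assume $v\cdot v\equiv-1$ (Minkowski product), so $v=(v_0,\dots,v_n)$ maps $\Sigma$ into $\mathbb H^n$. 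Each $v_i$ is a $(-2)$-eigenfunction, hence $\Delta_g v=-2v$ is everywhere parallel to the position vector $v$, i.e. normal to $\mathbb H^n$, so $v$ is harmonic; and since $v\cdot v$ is constant, the Minkowski energy density equals $\Delta_g v\cdot v=-2(v\cdot v)=2$. Taking the $g$-trace of the conformality identity then forces $\phi=1$, so $v^{\ast}ds^2=g$ and $v$ is an \emph{isometric} minimal immersion.

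It remains to verify the free boundary condition and identify the Robin data. On $\partial\Sigma$ the boundary component of $\sum_jt_jX(u_j)=0$, combined with the interior constraint and $\sigma_0(g)<\sigma_k(g)$, forces $u_0$ to be constant there; since $\|u_0\|_{L^2(\partial\Sigma)}=1$ this constant is $|\partial\Sigma|_g^{-1/2}$, so after rescaling $v_0\equiv\cosh r$ and $\sum_{j\geqslant1}v_j^2\equiv\sinh^2 r$ on $\partial\Sigma$; hence $v(\partial\Sigma)\subset\{x_0=\cosh r\}=\partial\mathbb B^n(r)$. Differentiating $-v_0^2+\sum v_j^2\equiv-1$ along the outward conormal $\eta$ and inserting the Robin relations $\partial_\eta v_0=\sigma_0(g)v_0$, $\partial_\eta v_j=\sigma_k(g)v_j$ gives $-\sigma_0(g)\cosh^2 r+\sigma_k(g)\sinh^2 r=0$, which is exactly the condition that $dv(\eta)$ be parallel to the outward unit normal $N=\frac1{\sinh r}\big(v\cosh r-\partial_0\big)$ computed in the proof of Proposition~\ref{prop:char}, i.e. orthogonal intersection. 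Finally, isometry gives $|dv(\eta)|=|N|=1$, pinning the proportionality constant to $1$ and hence $\sigma_k(g)=\coth r$, $\sigma_0(g)=\tanh r$; these are precisely the eigenvalues of Proposition~\ref{prop:char}, so $v$ is a free boundary minimal immersion into $\mathbb B^n(r)\subset\mathbb H^n$, and discarding dependent functions and relabelling fixes $n$. For part (II) the same scheme runs with Lemma~\ref{lem:HB}(II) in $\mathcal L_g$: the scalar first component now yields the pointwise constraint $-v_0^2+\sum v_j^2\equiv-1$ directly but \emph{no} conformality, while the eigenvalue equation still makes $v$ harmonic, so one obtains a free boundary harmonic map.

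The main technical obstacle is the convexity step: one must secure $0$ in the genuine convex hull of $X(S)$, not merely its closure, so that Carath\'eodory gives an exact finite combination $\sum_j t_jX(u_j)=0$; this rests on $\dim V_k(g)<\infty$ confining $X(S)$ to a finite-dimensional subspace of $\mathcal H_g$, and on handling the quadratic (rather than linear) dependence of $X$ on $u$ through Lemma~\ref{lem:HB}. The conceptual key is that matching the frequency $\alpha=-2$ to the dimension $k=2$ of $\Sigma$, together with the normalization $v\cdot v\equiv-1$, pins the conformal factor to $1$, and that this isometry in turn pins the Robin eigenvalues to $\tanh r$ and $\coth r$ — which is what upgrades a conformal harmonic map to a genuine free boundary minimal immersion of the prescribed geodesic ball.
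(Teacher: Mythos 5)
Your proposal is, in structure, the same proof as the paper's: Lemma~\ref{lem:HB} combined with Hahn--Banach separation (plus Carath\'eodory, which the paper leaves implicit) to produce an exact convex combination $\sum_j t_j X(u_j)=0$; the $g$-trace of the tensorial component to get $-v_0^2+\sum_j v_j^2\equiv-1$; the eigenvalue equation $\Delta_g v=-2v$ applied to this constraint to pin the Minkowski energy density to $2$ and hence the conformal factor to $1$, giving an isometric harmonic (hence minimal) immersion into $\mathbb H^n$; and the boundary component together with the Robin relations to force $v_0\equiv\cosh r$ on $\partial\Sigma$ and to identify $\sigma_0=\tanh r$, $\sigma_k=\coth r$. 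Two of your steps are mild, correct streamlinings rather than different ideas: you get $v_0\equiv\cosh r$ on $\partial\Sigma$ by substituting the interior constraint into the boundary identity, which collapses to $0=\tfrac{\sigma_k-\sigma_0}{2}\left(\cosh^2 r-v_0^2\right)$, whereas the paper first derives $\sigma_k=(\coth^2 r)\,\sigma_0$ and $\sigma_k=(\sigma_k-\sigma_0)v_0^2$ and then combines them; and you fix the eigenvalues via the parallelism $dv(\eta)\parallel N$ plus $|dv(\eta)|=|N|=1$, where the paper evaluates the isometry identity on $(\eta,\eta)$. These are equivalent computations.

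There is, however, one genuine gap: you never show $v(\Sigma)\subset\mathbb B^n(r)$. Your argument places the image in $\mathbb H^n$ and the boundary on $\partial\mathbb B^n(r)=\{x_0=\cosh r\}$, but nothing you wrote excludes the interior of the surface from entering the region $\{x_0>\cosh r\}$, and the proposition asserts a map into the ball. The paper closes this with a maximum principle: $v_0$ is a positive multiple of the positive first eigenfunction $u_0$ and satisfies $\Delta_g v_0=-2v_0<0$, so with the sign convention $\Delta_g=-\mydiv_g\circ\nabla^g$ the function $v_0$ is subharmonic and attains its maximum on $\partial\Sigma$; hence $v_0\leqslant\cosh r$ on all of $\Sigma$ and $v(\Sigma)\subset\mathbb B^n(r)$. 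This one-line step should be inserted right after you obtain $v_0\equiv\cosh r$ on $\partial\Sigma$, and the same step is needed in your part (II). A smaller omission: pinning your proportionality constant to $+1$ (rather than $-1$) requires $\sigma_0(g,-2)>0$, which is immediate from the Rayleigh quotient with $\alpha=-2$ (Claim~3); the paper states this explicitly and you should too.
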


The proof, which is the same as for the spherical case given in~\cite{lima2023eigenvalue} up to minor modifications for the hyperbolic case and higher eigenvalues, is postponed to~Subsection~\ref{sub:char.min}. In this section we prove a converse statement to the previous proposition.

\begin{proposition}\label{prop:converse} Let $\Sigma$ be a compact surface with boundary.
\begin{itemize}
\item[(I)] Let $g \in \mathcal R(\Sigma)$ and assume that there exist $v_0\in V_0(g)$ and a collection $(v_1,\ldots, v_n)$ of independent functions in $V_k(g)$ such that
\begin{itemize}
\item[(i)] $-dv_0\otimes dv_0+\displaystyle\sum_{j=1}^n dv_j\otimes dv_j=g$,\\
\item[(ii)]  $-v_0^2+\displaystyle\sum_{j=1}^nv_j^2=-1$.
\end{itemize} 
Then $g$ is extremal for $\Omega_{r,k}(\Sigma,g)$.\\

\item[(II)] Let $g \in \mathcal R(\Sigma)$ and assume that there exist $v_0\in V_0(g)$ and a collection $(v_1,\ldots, v_n)$ of independent functions in $V_k(g)$ such that $-v_0^2+\sum_{j=1}^nv_j^2=-1$. Then $g$ is extremal for $\Omega_{r,k}(\Sigma,g)$ in its conformal class $[g]$.
\end{itemize}
\end{proposition}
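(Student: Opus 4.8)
The plan is to prove Proposition~\ref{prop:converse} by feeding the hypotheses directly into the variational formulas for $\Omega_{r,k}'(0)$ stated above and showing that the relevant quadratic form $Q_h$ (respectively $Q_w$) is \emph{identically zero} on the span of the given eigenfunctions. Extremality does not require the derivative to vanish for all deformations, but once we produce a single eigenfunction in $V_k(g)$ realizing an isotropic direction (equivalently, making the Hilbert-space pairing in Lemma~\ref{lem:HB} vanish for every test pair), the sign-change criterion of Definition~\ref{extremal:def} is satisfied. So the strategy is to verify the conditions of Lemma~\ref{lem:HB} by a direct substitution, exploiting the two pointwise identities (i) and (ii).

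For part (I), I would first rescale the given data to match the normalization in the variational formula: set $u_0 = \cosh r\, v_0$ and $u_j = \sinh r \, v_j$ so that conditions (i) and (ii) become $-du_0\otimes du_0 + \sum_{j\geq 1} du_j\otimes du_j = (\text{const})\,g$ and $-u_0^2 + \sum u_j^2 = -\cosh^2 r + \sinh^2 r \cdot(\cdots)$, with the hyperbolic identity $\cosh^2 r - \sinh^2 r = 1$ doing the bookkeeping. (The precise constants need care, and matching them to the normalization $\|u_k\|_{L^2(\partial\Sigma)}=1$ is where I must be attentive.) Then I would compute $\sum_{j=1}^n Q_h(\sinh r\, v_j)$, i.e.\ sum the quadratic form $Q_h$ over the whole collection. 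The point is that $\sum_j \tau(u_j)$ telescopes: because $\tau(v) = \tfrac{|\partial\Sigma|}{2}(|\nabla v|^2 + 2v^2)g - |\partial\Sigma|\,dv\otimes dv$, summing with the Minkowskian signs produces terms $\sum (du_j\otimes du_j)$ and $\sum u_j^2$, which collapse via (i) and (ii) to multiples of $g$ and of $1$ on the interior, and correspondingly on the boundary $\sum u_j^2|_{\partial\Sigma}$ collapses via the Robin boundary data $\Phi_0 = \cosh r$ from Proposition~\ref{prop:char}. After this collapse the interior bulk integrand and the boundary integrand in $Q_h$ cancel identically, giving $\sum_j Q_h(v_j) = 0$ for \emph{every} $h$. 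Since a sum of real numbers vanishes, not all $Q_h(v_j)$ can share a strict sign, so some eigendirection is isotropic, which is exactly the conclusion of Lemma~\ref{lem:HB}(I); by the logic of Definition~\ref{extremal:def} this yields extremality on $\mathcal R(\Sigma)$.

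For part (II) the argument is strictly easier and strictly parallel: conformal deformations only see the functional $Q_w$, which involves the pointwise quantity $\cosh^2 r\, u_0^2 - \sinh^2 r\, u_k^2 - 1$ and the boundary term $F$, but \emph{not} the tensorial identity (i). Summing $\sum_j Q_w(v_j)$ and invoking only (ii) together with the boundary value $\Phi_0=\cosh r$, the interior integrand $\cosh^2 r\, v_0^2 - \sinh^2 r \sum v_j^2 - 1$ vanishes pointwise by (ii) after rescaling, so that summand drops out entirely, and the boundary term collapses exactly as before. Again $\sum_j Q_w(v_j)=0$, producing an isotropic conformal direction and hence conformal extremality via Lemma~\ref{lem:HB}(II).

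The main obstacle I anticipate is purely algebraic rather than conceptual: correctly threading the scaling factors $\cosh r$, $\sinh r$ and the Minkowskian signs through $\tau$, $F$, and the normalization $\|u_k\|_{L^2(\partial\Sigma)}=1$, so that the telescoping in the interior and the cancellation on the boundary happen simultaneously and exactly (no leftover multiple of $|\partial\Sigma|$). A secondary subtlety is establishing the \emph{independence} of the $v_j$ as $V_k(g)$-eigenfunctions well enough that the isotropic direction produced genuinely lies in the span and that the quadratic form really does change sign (as opposed to being degenerate-but-semidefinite); but since $\sum_j Q(v_j)=0$ with not-all-zero summands already forces opposite signs, this is handled automatically. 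Throughout I can lean on the fact, noted after Proposition~\ref{prop:char}, that $\Phi_0=\cosh r$ on $\partial\Sigma$, which is precisely what makes the boundary integrals close up.
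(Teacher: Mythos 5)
Your overall strategy coincides with the paper's: substitute the given eigenfunctions into the variational formulas $Q_h$ and $Q_w$, use (i) and (ii) to force cancellation, and conclude that for every deformation the one-sided derivatives of $\Omega_{r,k}$ have opposite signs. However, as written the proposal has genuine gaps in both the logical backbone and the computation.

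The logical gap: you close the argument by saying that producing an isotropic direction verifies ``the conclusion of Lemma~\ref{lem:HB}'' and hence yields extremality. That is backwards. Lemma~\ref{lem:HB} asserts that extremality \emph{implies} the existence of an isotropic direction; it gives no converse, so verifying its conclusion proves nothing by itself. What is actually needed (and what the paper uses, tersely) is the analytic fact that the one-sided derivatives of $t\mapsto\Omega_{r,k}(\Sigma,g(t))$ exist and are computed by extremizing $Q_h$ over $L^2(\partial\Sigma,g)$-normalized elements of $V_k(g)$: then $Q_h(u_+)\leqslant 0$ forces the right derivative to be $\leqslant 0$ and $Q_h(u_-)\geqslant 0$ forces the left derivative to be $\geqslant 0$, which is exactly the criterion of Definition~\ref{extremal:def}. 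Without invoking this derivative characterization your argument is circular: it uses a necessary condition for extremality as if it were sufficient.

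The computational gaps: (a) the plain sum $\sum_{j}Q_h(\cdot)$ does not vanish, because $Q_h(u)$ is not homogeneous quadratic in $u$ --- it contains $u$-independent pieces (the $\int_\Sigma\langle g,h\rangle\,dA_g$ term, the entire $\tau(\cosh r\,u_0)$ term, and the constants $\cosh^2 r$, $\sinh^2 r$ inside $F$), and hypotheses (i)--(ii) are calibrated to cancel exactly \emph{one} copy of these against the quadratic contributions of all $n$ eigenfunctions. One must therefore take a convex combination $\sum_j t_jQ_h(u_j)$ with $t_j>0$, $\sum_j t_j=1$, folding the weights into the normalization $u_j=v_j\big((t_j|\partial\Sigma|_g)^{1/2}\sinh r\big)^{-1}$ (note also that one divides by $\sinh r$, not multiplies as in your rescaling, since the arguments of $\tau$ and $F$ are $\sinh r\,u_j$); your unweighted sum over-counts the constant part and equals $(n-1)$ times a generically nonzero quantity. (b) Even after the correct cancellations there remains the term $\bigl(-\tfrac{\sigma_0}{2}\cosh^2 r+\tfrac{\sigma_k}{2}\sinh^2 r\bigr)\int_{\partial\Sigma}h(T,T)\,dL_g$ (respectively, the analogous term with $\int_{\partial\Sigma}w\,dL_g$ in the conformal case), which is not zero for arbitrary $h$; the paper kills it by first normalizing the deformation so that $|\partial\Sigma|_{g(t)}$ is constant, hence $\int_{\partial\Sigma}h(T,T)\,dL_g=0$. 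Your claim of cancellation ``for every $h$, with no leftover'' is false as stated, and your fallback --- leaning on $\Phi_0=\cosh r$ on $\partial\Sigma$ --- is not available, since that fact is stated for free boundary minimal immersions, which $v$ is not yet known to be under (i)--(ii) alone; establishing it (equivalently, $\sigma_0=\tanh r$, $\sigma_k=\coth r$) amounts to rerunning the forward-direction boundary analysis, which your proposal does not do.
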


\begin{proof}
(I) Observe that the equation  $-v_0^2+\sum_{j=1}^nv_j^2=-1$ after taking the normal derivative yields that 
$$
\sigma_k=(\sigma_k-\sigma_0)v_0^2 \text{ on}\, \partial\Sigma.
$$ 
Notice that $\sigma_k-\sigma_0>0$ and $\sigma_k>0$ (the spectrum for $\alpha=-2$ is non-negative by Claim 2). Hence, $v_0=const\neq 0$ along $\partial\Sigma$. Without loss of generality, we can assume that $v_0>0$ on $\partial\Sigma$. Moreover, it is clear that $v_0\geqslant 1$. Then $v_0=\cosh r$ for some $r$ on $\partial\Sigma$. Then the previous equation implies that $\sigma_k=\sigma_0\coth^2 r$.

Fix this $r$ and consider the functional $\Omega_{r,k}$. Include the metric $g$ in a smooth family of metrics $g(t)$, i.e., $g(0)=0, \, t\in (-\varepsilon,\varepsilon)$. Let $h=\frac{dg(t)}{dt}|_{t=0}$. Consider a set of positive real numbers $t_1,\ldots,t_n\in \mathbb R_{+}$ such that $\sum_{j=1}^n t_j=1$ and define the functions 
$$
u_0=\frac{v_0}{|\partial\Sigma|^{1/2}\cosh r} \quad \text{and} \quad u_j=\frac{v_j}{(t_j|\partial\Sigma|)^{1/2}\sinh r},\, j=1,\ldots,n.
$$ 
Then
\begin{align*}
&\sum_{j=1}^nt_jQ_h(u_j)=\int_\Sigma\left\langle-\frac12\left(|\nabla^gv_0|^2_g+2v_0^2\right)g+\sum_{j=1}^n \frac12\left(|\nabla^gv_j|^2_g+2v_j^2\right)g,h\right\rangle dA_g\\&+\int_\Sigma\langle dv_0\otimes dv_0-\sum_{j=1}^n dv_j\otimes dv_j+g,h\rangle dA_g\\
&+\left(-\frac12\sigma_{0}\cosh^{2} r+\frac12\sigma_{k}\sinh^{2}r\right)\int_{\partial\Sigma}h(T,T)dL_g\\&-\frac{1}{2}\int_{\partial\Sigma}\left(-\sigma_0(g)v_0^2+\sum_{j=1}^n\sigma_k(g)v_j^2\right)h(T,T)dL_g\\
&=-\frac{1}{4}\int_{\partial\Sigma}\left(-\frac{\partial}{\partial\eta}v_0^2+\sum_{j=1}^n\frac{\partial}{\partial\eta}v_j^2\right)h(T,T)dL_g=0.
\end{align*}
Here we used that  $-dv_0\otimes dv_0+\sum_{j=1}^n dv_j\otimes dv_j=g$, hence, $-|\nabla^gv_0|^2_g+\sum_{j=1}^n|\nabla^gv_j|^2_g=2$, and the fact that $\frac{\partial}{\partial\eta}v_0=\sigma_0(g)v_0,\, \frac{\partial}{\partial\eta}v_j=\sigma_k(g)v_j,\, j=1,\ldots,n$ on $\partial\Sigma$, while for all points of $\Sigma$ (including the boundary) one has $-v_0^2+\sum_{j=1}^nv_j^2=-1$. We also use that $-\frac12\sigma_{0}\cosh^{2} r+\frac12\sigma_{k}\sinh^{2}r=0$, since, as we have shown above, $\sigma_k=\sigma_0\coth^2 r$. Thus, there exist $u_{\pm}\in V_k(g)$ such that $\pm Q_h(u_\pm)\leqslant 0$. The inequality $Q_h(u_+)\leqslant 0$ implies that
$$
\lim_{t\to 0_+}\frac{\Omega_{r,k}(\Sigma,g(t))-\Omega_{r,k}(\Sigma,g)}{t}\leqslant 0,
$$
while $Q_h(u_-)\geqslant 0$ implies that
$$
\lim_{t\to 0_-}\frac{\Omega_{r,k}(\Sigma,g(t))-\Omega_{r,k}(\Sigma,g)}{t}\geqslant 0.
$$
Whence $\Omega_{r,k}(\Sigma,g(t))\leqslant \Omega_{r,k}(\Sigma,g)+o(t)$. Then the metric $g$ is extremal for $\Omega_{r,k}(\Sigma,g)$ in $\mathcal R(\Sigma)$.

The proof of (II) is similar. Here we consider a smooth deformation $g(t)=f(t)g\in \mathcal R(\Sigma),\, w=f'(0)$, with $f(0)\equiv1$. Then we define the functions $u_0$ and $u_j,\,j=1,\ldots,n$ as above and consider $\sum_{j=1}^nt_jQ_w(u_j)$. The assumption of the proposition implies that this sum is 0. Arguing as above, we conclude that $g$ is extremal for  for $\Omega_{r,k}(\Sigma,g)$ in its conformal class $[g]$.
\end{proof}

\begin{remark}
The proof of the part (I) in the case of the functional $\Theta_{r,k}$ is similar. Namely, the assumption $\sum_{j=0}^nv_j^2=1$ implies that $\sigma_k=(\sigma_k-\sigma_0)v_0^2$ on $\partial\Sigma$. However, in this case $\sigma_k$ can vanish a priory. Then without loss of generality, one can assume that $v_0\geqslant 0$. Moreover, it is clear that $v_0\leqslant 1$. Then $v_0=\cos r$ for some $r$. Hence, $\sigma_k=-\sigma_0\cot^2 r$. The remaining part of the proof is similar.
\end{remark}

Combining Propositions~\ref{prop:char.min} and~\ref{prop:converse}, we obtain Theorem~\ref{thm:main}.

\begin{remark}
In Section~\ref{sec:spec} we show that the geodesic balls and the critical spherical catenoid in $\mathbb B^n(r)\subset \mathbb H^n$ are embedded isometrically by eigenfunctions $v_0\in V_0(g)$ and $v_i\in V_1(g), i=1,\ldots, n$. Similar results were earlier obtained by Lima and Menezes in~\cite{lima2023eigenvalue} for a geodesic 2-ball and the critical spherical catenoid in $\mathbb B^n(r)\subset \mathbb S^n_+$. Hence, the metrics on these submanifolds are extremal for the functionals $\Theta_{r,1}(\Sigma,g)$ or $\Omega_{r,1}(\Sigma,g)$, respectively. In fact, the metric on a geodesic 2-ball in $\mathbb B^n(r) \subset \mathbb S^n_+$ is \textit{maximal} for the functional $\Theta_{r,1}(\Sigma,g)$ on $\mathcal R_a(\Sigma)$, as it was shown in~\cite{lima2023eigenvalue}.
\end{remark}

\section{Upper and lower bounds for the functionals $\Theta_{r,k}(\Sigma,g)$ and $\Omega_{r,k}(\Sigma,g)$}\label{sec:bounds}

In this section we show that the functionals $\Theta_{r,k}(\Sigma,g)$ and $\Omega_{r,k}(\Sigma,g)$ are not bounded from one side on $\mathcal R_a(\Sigma)$ and $\mathcal R(\Sigma)$, respectively, whatever the compact surface $\Sigma$ is. More precisely, $\Theta_{r,k}(\Sigma,g)$ is not bounded from below $\mathcal R_a(\Sigma)$ and $\Omega_{r,k}(\Sigma,g)$ is not bounded from above even in any conformal class in $\mathcal R(\Sigma)$. However, the functional $\Theta_{r,1}(\Sigma,g)$ is bounded from above on $\mathcal R_a(\Sigma)$, as it was previously shown in~\cite{lima2023eigenvalue}. We show that $\Theta_{r,k}(\Sigma,g), \, k\geqslant 2$ is also bounded from above on $\mathcal R_a(\Sigma)$, while  $\Omega_{r,k}(\Sigma,g)$ is bounded from below (by 0) on $\mathcal R(\Sigma)$. In the end of this section we show that the lower bound for the functional $\Omega_{r,k}(\Sigma,g)$ is sharp.

\subsection{One-sided unboundedness of the functionals $\Theta_r(\Sigma,g)$ and $\Omega_r(\Sigma,g)$} We start our discussion with the following proposition:

\begin{proposition}
The functional $\Theta_{r,k}(\Sigma,g)$ is not bounded from below on $\mathcal R_a(\Sigma)$. The functional $\Omega_{r,k}(\Sigma,g)$ is not bounded from above on $\mathcal R(\Sigma)$.  
\end{proposition} 

\begin{proof}
First we show the unboundedness from below of the functional $\Omega_{r,k}(\Sigma,g)$ on $\mathcal R_a(\Sigma)$. Consider a sequence $(g_n)_{n\in\mathbb N}\subset \mathcal R_a(\Sigma)$, converging smoothly to $g$ with the first Dirichlet eigenvalue 2 (se can always find such a sequence, since $\mathcal R_a(\Sigma)$ is open in $\mathcal R(\Sigma)$ with respect to $C^\infty$-topology). By~\cite[Proposition 2]{lima2023eigenvalue}, for that sequence one has $\sigma_0(g_n,2)\to -\infty$ as $n\to \infty$. However, it is easy to see that $\sigma_k(g_n,2)$ is uniformly bounded from above. Indeed, as it was shown in the proof of Lemma 2 in~\cite{lima2023eigenvalue} (see formula (5.2)), for any function $u\in H^1(\Sigma)$ there exist non-zero constants $C_1$ and $C_2$ such that
$$
\int_{\Sigma} |\nabla^{g_n} {u}|^{2}_{g_n}\,dA_{g_n} \leqslant C_1\int_{\Sigma} |\nabla^{g} {u}|^{2}_{g}\,dA_{g}~\text{and}~\int_{\partial\Sigma} u^2\,dL_{g_n}\geqslant C_2\int_{\partial\Sigma} u^2\,dL_{g}.
$$
Moreover, by Claim 2 for any $(k+1)$-dimensional subspace $W$ of $H^1(\Sigma)$, one has
\begin{align*}
\sigma_k(g_n,2) &\leqslant  \sup_{0\neq u\in W}\frac{\displaystyle\int_{\Sigma} |\nabla^{g_n} {u}|^{2}_{g_n}\,dA_{g_n} - 2\int_{\Sigma} {u}^{\,2}\,dA_{g_n}}{\displaystyle\int_{\partial\Sigma} u^2\,dL_{g_n}} \\  &\leqslant\sup_{0\neq u\in W}\frac{\displaystyle\int_{\Sigma} |\nabla^{g_n} {u}|^{2}_{g_n}\,dA_{g_n}}{\displaystyle\int_{\partial\Sigma} u^2\,dL_{g_n}} \leqslant C\sup_{0\neq u\in W}\frac{\displaystyle\int_{\Sigma} |\nabla^{g} {u}|^{2}_{g}\,dA_{g}}{\displaystyle\int_{\partial\Sigma} u^2\,dL_{g}}=C',
\end{align*}
where $C=C_1/C_2$ and $C'=const$. Then, since $|\Sigma|_{g_n} \to |\Sigma|_{g}$ and $|\partial\Sigma|_{g_n} \to |\partial\Sigma|_{g}$, as $n\to\infty$, we get that $\Omega_{r,k}(\Sigma,g_n) \to -\infty$ as $n\to \infty$.
  
Now we prove the unboundedness from above of the functional $\Omega_{r,k}(\Sigma,g)$ on $\mathcal R(\Sigma)$. Consider the following conformal deformation of a metric $g$ on $\Sigma$: $g_{\varepsilon,\delta}=e^{2\varphi_{\varepsilon,\delta}}g$, where $\varphi_\varepsilon$ are smooth and
\begin{align*}
supp(\varphi_{\varepsilon,\delta}) \subset \Sigma\setminus\partial\Sigma_{\delta/2},\quad &\min\left\{1,\frac{1}{2\varepsilon^2}\right\} \leqslant e^{2\varphi_{\varepsilon,\delta}}\leqslant \max\left\{1,\frac{1}{2\varepsilon^2}\right\}~\text{on}~\Sigma,\\ &\text{and}~e^{2\varphi_{\varepsilon,\delta}}=\frac{1}{2\varepsilon^2}~\text{on}~\Sigma\setminus\partial\Sigma_{\delta}.
\end{align*}
Consider the problem
 $$
 \begin{cases}
 \Delta_gu=-\dfrac{1}{\varepsilon^2} u\quad &\text{in } \Sigma,\\
 \dfrac{\partial u}{\partial\eta}=\sigma u\quad &\text{on } \partial\Sigma.
 \end{cases}
 $$ 
Claim 5 implies that for $i=0,k$
$$
\sigma_i(g_{\varepsilon,\delta},-2) \to \sigma_i(g_{\varepsilon},-\frac{1}{\varepsilon^2}),~\text{as } \delta\to 0. 
$$
By Claim 6 we have that $\sigma_i(g_{\varepsilon},-\frac{1}{\varepsilon^2})=\frac{1}{\varepsilon}(1+o(1))$, as $\epsilon\to 0$ and $\sigma_k(g_{\varepsilon},-\frac{1}{\varepsilon^2})>0$ for sufficiently small $\epsilon$. Hence, $\sigma_k(g_{\varepsilon,\delta},-2)>0$ for sufficiently small $\delta$ and $\varepsilon$. Then 
$$
\Omega_{r,k}(\Sigma,g_{\varepsilon,\delta})>-\sigma_0(g_{\varepsilon,\delta},-2)\cosh^2r|\partial\Sigma|_{g_{\varepsilon,\delta}}+2|\Sigma|_{g_{\varepsilon,\delta}}
$$
for sufficiently small $\delta$ and $\varepsilon$. Moreover,
$$
 |\partial\Sigma|_{g_{\varepsilon,\delta}}= |\partial\Sigma|_{g}\quad \text{and}\quad  |\Sigma|_{g_{\varepsilon,\delta}}=\frac{1}{2\varepsilon^2}|\Sigma|_g+o(\varepsilon),~\text{as}~\varepsilon\to 0.
$$
All together implies
\begin{align*}
\Omega_{r,k}(\Sigma,g_{\varepsilon,\delta})>-\sigma_0(g_{\varepsilon,\delta},-2)\cosh^2r|\partial\Sigma|_{g_{\varepsilon,\delta}}+2|\Sigma|_{g_{\varepsilon,\delta}}\xrightarrow[\delta\to 0]{}\\-\frac{1}{\varepsilon}\big(1+o(1)\big)|\partial\Sigma|_{g}+\frac{1}{\varepsilon^2}|\Sigma|_g+o(\varepsilon)\xrightarrow[\varepsilon\to 0]{} +\infty.
\end{align*}
Hence, the functional $\Omega_{r,k}(\Sigma,g)$ is not bounded from above in $[g]$.
\end{proof}

\subsection{Bounds for $\Theta_{r,k}(\Sigma,g)$ and $\Omega_{r,k}(\Sigma,g)$} However, it turns out that the functional $\Theta_{r,k}(\Sigma,g)$ is bounded from above for any $k\in\mathbb N$ on $\mathcal R_a(\Sigma)$, while $\Omega_{r,k}(\Sigma,g)$ is bounded from below for any $k\in\mathbb N$ on $\mathcal R(\Sigma)$. More precisely, the following proposition holds.

\begin{proposition}\label{prop:bounds}
The functional $\Theta_{r,k}(\Sigma,g), \, k\geqslant 1$ is bounded from above on $\mathcal R_a(\Sigma)$. The functional $\Omega_{r,k}(\Sigma,g), \, k\geqslant 1$ is nonnegative for any $g \in \mathcal R(\Sigma)$, i.e., it is bounded from below by 0. 
\end{proposition}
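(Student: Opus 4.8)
The two assertions pull in opposite directions, so I would treat them separately; the lower bound for $\Omega_{r,k}$ is elementary, while the upper bound for $\Theta_{r,k}$ carries all of the difficulty.

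\emph{Nonnegativity of $\Omega_{r,k}$.} The plan is to exploit the variational characterization of Claim~3 with the constant test function. Testing $\sigma_0(g,-2)$ against $u\equiv 1$ gives $\sigma_0(g,-2)\le 2|\Sigma|_g/|\partial\Sigma|_g$, since the gradient term vanishes and the bulk term $2\int_\Sigma u^2\,dA_g$ equals $2|\Sigma|_g$. Because $k\ge 1$, the eigenvalue ordering yields $\sigma_k(g,-2)\ge\sigma_0(g,-2)$, and using $\cosh^2 r-\sinh^2 r=1$ together with $\sinh^2 r\ge 0$ I would estimate
$$-\sigma_0(g,-2)\cosh^2 r+\sigma_k(g,-2)\sinh^2 r\ge -\sigma_0(g,-2).$$
Multiplying by $|\partial\Sigma|_g>0$ and inserting the bound on $\sigma_0(g,-2)$ gives $\Omega_{r,k}(\Sigma,g)\ge -\sigma_0(g,-2)|\partial\Sigma|_g+2|\Sigma|_g\ge 0$, which is the second claim.

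\emph{Boundedness from above of $\Theta_{r,k}$.} Here I would first dispose of the $\sigma_0$-term. Testing $\sigma_0(g,2)$ against $u\equiv 1$ now gives $\sigma_0(g,2)\le -2|\Sigma|_g/|\partial\Sigma|_g<0$, so that $\sigma_0(g,2)\cos^2 r\,|\partial\Sigma|_g\le -2\cos^2 r\,|\Sigma|_g$. Substituting this into the definition of $\Theta_{r,k}$ and using $2|\Sigma|_g-2\cos^2 r\,|\Sigma|_g=2\sin^2 r\,|\Sigma|_g$ reduces the problem to the single inequality
$$\Theta_{r,k}(\Sigma,g)\le \sin^2 r\,\big(2|\Sigma|_g+\sigma_k(g,2)|\partial\Sigma|_g\big),$$
so it suffices to bound $2|\Sigma|_g+\sigma_k(g,2)|\partial\Sigma|_g$ by a constant $C(\Sigma,k)$ depending only on $k$ and the topology of $\Sigma$ (its genus and number of boundary components).

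This last bound is the crux and, I expect, the main obstacle. It is genuinely an upper bound for the frequency-$2$ Steklov eigenvalue in which the area must be \emph{absorbed}: since the functional carries a fixed frequency it is not scale invariant, and $|\Sigma|_g$ is unbounded on $\mathcal R(\Sigma)$. Consequently the naive monotonicity estimate $\sigma_k(g,2)\le\sigma_k(g,0)$ combined with the classical normalized-Steklov bound $\sigma_k(g,0)|\partial\Sigma|_g\le C(\Sigma,k)$ does not close the gap, as it discards exactly the negative bulk contribution $-2\int_\Sigma u^2\,dA_g$ that is meant to cancel $2|\Sigma|_g$. Instead I would build a $(k+1)$-dimensional test subspace directly in the min-max for $\sigma_k(g,2)$ via the Grigor'yan--Netrusov--Yau/Hassannezhad decomposition of $\Sigma$ into $k+1$ mutually disjoint regions, adapted simultaneously to the boundary measure $dL_g$ and the area measure $dA_g$; choosing test functions subordinate to this decomposition one retains control of $\int_\Sigma u^2\,dA_g$ relative to $\int_{\partial\Sigma}u^2\,dL_g$, so that the bulk term $-2\int_\Sigma u^2\,dA_g$ produces precisely the $-2|\Sigma|_g$ needed to offset $2|\Sigma|_g$, while the Dirichlet/boundary terms contribute the topological constant through the number of pieces. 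Carrying out this two-measure bookkeeping is the technical heart; once $2|\Sigma|_g+\sigma_k(g,2)|\partial\Sigma|_g\le C(\Sigma,k)$ is established, the displayed reduction immediately yields $\Theta_{r,k}(\Sigma,g)\le \sin^2 r\,C(\Sigma,k)$.
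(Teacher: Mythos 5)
Your proposal follows essentially the same route as the paper: the nonnegativity of $\Omega_{r,k}$ via the constant test function and $\sigma_k(g,-2)\geqslant\sigma_0(g,-2)$ is identical to the paper's argument, and so is your reduction of the $\Theta_{r,k}$ bound to the single inequality $2|\Sigma|_g+\sigma_k(g,2)|\partial\Sigma|_g\leqslant C(\Sigma)k$. The one thing you flag as the unexecuted ``technical heart'' is, however, precisely a citable theorem rather than something to be reproved: Grigor'yan--Netrusov--Yau \cite{grigor2004eigenvalues} (Section 5) establish, for an energy form minus a measure, the bound $\lambda_k(\mathcal E-\sigma,\nu)\leqslant \bigl(Ck-\sigma_{\delta^2}(\Sigma)\bigr)/\nu(\Sigma)$ with $C$ depending only on the topology of $\Sigma$; taking $\mathcal E$ to be the Dirichlet energy, $\sigma=2\,dA_g$, $\nu=dL_g$, and $\delta=1$ (admissible because the negative part of $\sigma$ vanishes, as follows from their Section 4) gives exactly $\sigma_k(g,2)|\partial\Sigma|_g+2|\Sigma|_g\leqslant Ck$, i.e.\ your key inequality, with the two-measure bookkeeping already done. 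So the paper's proof is your reduction plus this citation, yielding $\Theta_{r,k}(\Sigma,g)\leqslant Ck\sin^2 r$ in agreement with your final display.
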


\begin{proof} \textit{Case of $\Theta_{r,k}(\Sigma,g)$.} The case of $k=1$ was proved in~\cite{lima2023eigenvalue}. The proof provided below works for any $k\in\mathbb N$. It is based on the upper bound from~\cite[Section 5]{grigor2004eigenvalues}:
$$
\lambda_k(\mathcal E-\sigma,\nu)\leqslant \frac{Ck-\sigma_{\delta^2}(\Sigma)}{\nu(\Sigma)},\,k \in \mathbb N,
$$
where $C$ is a positive constant, depending only on the topology of $\Sigma$, $\mathcal E(f,h)=\int_\Sigma \langle \nabla^gf,\nabla^g h\rangle_g\,dA_g$ is the energy form, $\sigma=2dA_g$ is the Lebesgue measure on $\Sigma$, so that $\sigma(f,h)=2\int_\Sigma fh\,dA_g$, and $\nu=dL_g$ is the Lebesgue measure on $\partial\Sigma$. With these data one has $\lambda_k(\mathcal E-\sigma,\nu)=\sigma_k(g,2)$ (see definitions (2.15) and (2.39) in~\cite{grigor2004eigenvalues}, where we take $\mathcal F=\mathrm{Dom}(\mathcal D_2)$), $\nu(\Sigma)=|\partial\Sigma|_g$ and $\sigma_{\delta^2}(\Sigma)=2\delta^2|\Sigma|_g$. Moreover, as it follows from the calculations in~\cite[Section 4]{grigor2004eigenvalues} (see pp. 186-187), one can take $\delta=1$ (since the negative part of $\sigma=2dA_g$ is zero). Hence, we get
$$
\sigma_k(g,2) \leqslant \frac{Ck-2|\Sigma|_g}{|\partial\Sigma|_g},\,k \in \mathbb N.
$$
Then
\begin{align*}
\Theta_{r,k}(\Sigma,g)= \left(\sigma_{0}(g,2)\cos^{2} r + \sigma_{k}(g,2)\sin^{2} r\right)|\partial\Sigma|_{g} +2|\Sigma|_{g} \leqslant Ck\sin^2r.
\end{align*}

\textit{Case of $\Omega_{r,k}(\Sigma,g)$.} First, we observe that Claim 2 implies that
$$
\sigma_0(g,-2)\leqslant \frac{2|\Sigma|_g}{|\partial\Sigma|_g}
$$
by simply taking $u=1$ as a test function for the Rayleigh quotient. We can take $u=1$, since $\alpha=-2$ is less than the first Dirichlet eigenvalue of $(\Sigma,g)$. Then
\begin{align*}
\Omega_{r,k}(\Sigma,g)=& \left(-\sigma_{0}(g,-2)\cosh^{2} r + \sigma_{k}(g,-2)\sinh^{2} r\right)|\partial\Sigma|_{g} +2|\Sigma|_{g} >\\ &\left(-\sigma_{0}(g,-2)\cosh^{2} r + \sigma_{0}(g,-2)\sinh^{2} r\right)|\partial\Sigma|_{g} +2|\Sigma|_{g}=\\&-\sigma_{0}(g,-2)|\partial\Sigma|_{g} +2|\Sigma|_{g} \geqslant 0.
\end{align*}
\end{proof}

\begin{remark}
In fact the estimate on $\lambda_k(\mathcal E-\sigma,\nu)$ allows a generalization to the case, when 2 is at an arbitrary location in the Dirichlet spectrum. Then the estimate depends on the number of Dirichlet eigenvalues less than 2.
\end{remark}

\subsection{Sharpness of the lower bound for $\Omega_{r,k}(\Sigma,g)$} In the end of this section we show that the lower bound for the functional $\Omega_{r,k}(\Sigma,g)$ is sharp. Consider as before the following conformal deformation $g_\varepsilon=e^{2\varphi_\varepsilon}g$ with
\begin{align*}
 supp(\varphi_\varepsilon) \subset \Sigma\setminus(\partial\Sigma)_{\varepsilon/2},~\min\{1,\varepsilon\} \leqslant e^{2\varphi_\varepsilon}\leqslant \max\{1,\varepsilon\}~\text{on}~\Sigma,~\text{and}~ e^{2\varphi_\varepsilon}=\varepsilon~\text{on}~\Sigma\setminus(\partial\Sigma)_{\varepsilon}.
\end{align*}
Consider the problem
 $$
 \begin{cases}
 \Delta_gu=0\quad \text{in } \Sigma,\\
 \dfrac{\partial u}{\partial\eta}=\sigma u\quad \text{on } \partial\Sigma.
 \end{cases}
 $$
Then by Claim 5 $\sigma_i(g_\varepsilon,-2)\to \sigma_i^S(g)$ for $i=0,k$, as $\varepsilon\to 0$, where $\sigma_i^S(g)=\sigma_i(g,0)$ is the $i$-th Steklov eigenvalue of $(\Sigma,g)$. Notice that $\sigma_0^S(g)=0$ and $|\Sigma|_{g_\varepsilon}\to 0$, as $\varepsilon \to 0$. Hence, 
$$
\Omega_{r,k}(\Sigma,e^{2\varphi_\varepsilon}g) \xrightarrow[\varepsilon\to 0]{}  \sigma^S_{k}(g)\sinh^{2} r|\partial\Sigma|_{g}.
$$
Let $(g_\delta)_\delta$ be a family of metrics such that $\sigma^S_k(g_\delta) \to 0$, as $\delta \to \infty$ (it always exists, see for instance~\cite[Section 2.2]{girouard2010hersch}). Consider the functions $\varphi_\varepsilon$ as above. We have
$$
\lim_{\varepsilon\to 0}\lim_{\delta\to\infty}\Omega_{r,k}(\Sigma,e^{2\varphi_\varepsilon}g_\delta)=0.
$$
It shows that the lower bound by 0 for the functional $\Omega_{r,k}(\Sigma,g)$ is sharp. 

 \section{Index bounds}\label{sec:spec}
 
 In this section we compute the index of geodesic balls and the critical spherical catenoids. We also give some general upper and lower bounds on the index of an FBMS in geodesic balls in $\mathbb S^n_+$ and $\mathbb H^n$.  

\subsection{Second variation of volume and energy}\label{sec:second} The quadratic form of the second variation of volume of a $k$-dimensional FBMS $\Sigma$ in $\mathbb B^n(r)$ of $\mathbb S^n_+$ or $\mathbb H^n$ is given by (see for example formula (1.4) in \cite{fraser2020extremal})
\begin{equation}\label{areaS}
S(X,X)=\int_\Sigma\Big(|\nabla^\perp X|^2-\mathcal R(X)\cdot X-\mathcal B(X)\Big)dA-\int_{\partial\Sigma}(B_{\partial \mathbb B^n(r)}(X,X)\cdot \eta)dL,
\end{equation}
where $X$ is a normal vector field on $\Sigma$, $\nabla^\perp X=\sum_{i=1}^k\nabla^\perp_{e_i}X$, $e_1,\ldots,e_k$ is a local orthonormal basis in $\Gamma(T\Sigma)$, $\nabla^\perp$ is the connection in the normal bundle, $\mathcal R$ is the curvature operator, $\mathcal B$ is the Simons operator and $B_{\partial \mathbb B^n(r)}$ is the second fundamental form of $\partial \mathbb B^n(r)$ with respect to the \textit{inward} unit normal. Particularly, for free boundary minimal hypersurfaces in $\mathbb B^n(r)$ in $\mathbb S^n_+$ one gets
\begin{gather*}
S(u,u)=\int_\Sigma\Big(|\nabla_{g}u|^2 -(n-1 + |B|^2)u^2\Big)dA - \cot r\int_{\partial\Sigma}u^2dL=\\
=\int_{\Sigma}\Big(\Delta_{g}u - (n-1)u-|B|^2)u\Big)u dA + \int_{\partial\Sigma}\left(\frac{\partial u}{\partial\eta} -  \cot r\,u\right)u dL.
\end{gather*}
In this formula we suppose that the normal bundle on $\Sigma$ is trivial. If $\nu$ is a unit normal vector field on $\Sigma$, then we can write $X=u\nu$ for a function $u$ on $\Sigma$. Abusing the notation, we use $S(u,u)$ in place of $S(X,X)$, since it is determined by $u$.

 Similarly, for $(n-1)$-dimensional FBMS in $\mathbb B^n(r)$ in $\mathbb H^n$ we have
 \begin{gather*}
S(u,u)=\int_\Sigma\Big(|\nabla_{g}u|^2 -(-(n-1) + |B|^2)u^2\Big)dA - \coth r\int_{\partial\Sigma}u^2dL=\\
=\int_{\Sigma}\Big(\Delta_{g}u + (n-1)u-|B|^2)u\Big)u\, dA + \int_{\partial\Sigma}\left(\frac{\partial u}{\partial\eta} -  \coth r\,u\right)u\, dL.
\end{gather*}
In both cases $B$ denotes the second fundamental of $\Sigma$ in $\mathbb B^{n}(r)$.

Let $V$ be a vector field on $\mathbb B^n(r)$ in $\mathbb S^n_+$ or $\mathbb H^n$. We can consider $V$ as a vector field in $\mathbb R^n$. Then the quadratic form of the second variation of energy is given by (see for example~\cite[Section 2]{lima2022bounds}) 
\begin{equation}\label{energyS}
S_E(V,V) = \int_{\Sigma}\Big(|\nabla V|^2 - k|V|^2\Big)\,dA - \cot r\,\int_{\partial\Sigma}|V|^2\,dL,
\end{equation}
for the case of $\mathbb S^n_+$, and by
\begin{equation}\label{energyH}
S_E(V,V) = \int_{\Sigma}\Big(|\nabla V|^2 +k|V|^2\Big)\,dA - \coth r\,\int_{\partial\Sigma}|V|^2\,dL,
\end{equation}
for the case of $\mathbb H^n$. Here $|\nabla V|^2=\sum_{j=0}^n|\nabla^gV^j|^2$ in the case of $\mathbb S^n_+$ and $|\nabla V|^2=-|\nabla^gV^0|^2+\sum_{j=1}^n|\nabla^gV^j|^2$ in the case of $\mathbb H^n$.

\begin{remark}\label{rem:v0}
The vector fields $V$ on $\mathbb R^{n+1}$ in the definition of $S_E$ have to satisfy some additional constraint equation, namely, $V\cdot \Phi=0$, where $\Phi$ is the position vector of $\Sigma$ in $\mathbb R^{n+1}$ and $\cdot$ is either the Euclidean or Minkowskian scalar product. Moreover, the free boundary condition implies that the flow generated by such a field $V$ has to preserve the boundary of $\mathbb B^n(r)$. This means that the $V^0$ component of the field $V$ vanishes on $\partial \mathbb B^n(r)$ (and in particular on $\partial\Sigma$).  
\end{remark}

 \begin{definition}\label{def:ind}
The \emph{(Morse) index} of an FBMS $\Sigma$ in $\mathbb B^n(r)$ of $\mathbb S^n_+$ or $\mathbb H^n$ is the maximal dimension of a subspace in $\Gamma(N\Sigma)$, on which the quadratic form $S$ is negative definite. Similarly, the \emph{energy index} of an FBMS $\Sigma$ in $\mathbb B^n(r)$ of $\mathbb S^n_+$ or $\mathbb H^n$ is the maximal dimension of a subspace in $\Gamma(T\mathbb R^n)$, satisfying the restrictions in Remark~\ref{rem:v0}, on which the quadratic form $S_E$ is negative definite.
 \end{definition}
 
 We use the notation $\Ind(\Sigma)$ for the index of $\Sigma$ and $\Ind_E(\Sigma)$ for the energy index of $\Sigma$.

 \subsection{Spectral index} We start this subsection with the definition of the spectral index for an FBMS in $\mathbb B^n(r)\subset \mathbb S^n_+$ or $ \mathbb H^n$. As a prototype, we take the definition of the spectral index for an FBMS in a unit ball in the Euclidean space, which was introduced by Karpukhin and M\'etras in~\cite{karpukhin2022laplace}.
 
 \begin{definition}\label{def:ind_spec}
  We define \emph{the spectral index} $\Ind_S(\Sigma)$ as the number of $k$-Steklov eigenvalues (counted with multiplicities), less than $\cot(r)$, for a $k$-dimensional FBMS $\Sigma$ in $\mathbb B^n(r)\subset \mathbb S^n_+$ and as the number of $-k$-Steklov eigenvalues (counted with multiplicities), less than $\coth r$, for a $k$-dimensional FBMS $\Sigma$ in $\mathbb B^n(r)\subset \mathbb H^n$. Equivalently, the spectral index of a free boundary minimal immersion $\Phi\colon \Sigma \to \mathbb B^n(r)\subset \mathbb S^n_+$ or $\mathbb H^n$, where $\dim\Sigma=k$, is defined as the maximal dimension of a subspace $V\subset C^\infty(\Sigma)$ such that the quadratic form 
 \begin{align}\label{ind_spec}
S_S(\varphi,\varphi)=\int_\Sigma|\nabla^g \widehat\varphi|_g^2dv_g\pm k\int_\Sigma \widehat\varphi^2dv_g-\int_{\partial \Sigma} |\nabla_\eta \Phi|_g\varphi^2ds_g
  \end{align}
  is negative definite. Here, as before, $\widehat\varphi$ denotes the extension of $\varphi$ in $C^\infty(\partial\Sigma)$ to $C^\infty(\Sigma)$ by the solution of equation $\Delta_g\widehat\varphi\pm k\widehat\varphi=0$. The sign $"-"$ corresponds to the case of $\mathbb S^n_+$ and the sign $"+"$ to the case of $\mathbb H^n$.
 \end{definition}
 
 \begin{remark}
Notice that $ |\nabla_\eta \Phi|_g=\cot r$ in the spherical case and $\coth r$ in the hyperbolic case. 
 \end{remark}
 
 The following proposition almost directly follows from the previous definition.
   
\begin{proposition}\label{prop:spec_ind}
For a $k$-dimensional FBMS $\Sigma$ in $\mathbb B(r)\subset \mathbb S^n_+$ one has
 $$
 \Ind_E(\Sigma)\leqslant n\Ind_S(\Sigma).
 $$
 \end{proposition}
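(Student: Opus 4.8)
The plan is to compare the energy index $\Ind_E(\Sigma)$ with the spectral index $\Ind_S(\Sigma)$ directly through their variational characterizations. Recall that $\Ind_E(\Sigma)$ is the maximal dimension of a subspace of admissible vector fields $V$ on which the second variation of energy $S_E(V,V)$ (given by~\eqref{energyS} in the spherical case or~\eqref{energyH} in the hyperbolic case) is negative definite, while $\Ind_S(\Sigma)$ is, by Definition~\ref{def:ind_spec}, the maximal dimension of a subspace $W\subset C^\infty(\Sigma)$ on which the scalar quadratic form $S_S(\varphi,\varphi)$ from~\eqref{ind_spec} is negative definite. The key observation is that an admissible vector field $V=(V^0,V^1,\ldots,V^n)$ has $n+1$ scalar components, but these are constrained by the relation $V\cdot\Phi=0$ from Remark~\ref{rem:v0}; after accounting for this constraint one effectively has $n$ free scalar components, and $S_E$ decouples as a sum of the scalar forms $S_S$ applied to the (harmonic-extension parts of the) individual components.

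First I would make the decoupling precise. For a vector field $V$ one writes $|\nabla V|^2=\sum_j |\nabla^g V^j|^2$ (with the appropriate sign on the $0$-component in the hyperbolic case) and $|V|^2=\sum_j (V^j)^2$ in the same signed sense, so that $S_E(V,V)=\sum_{j} S_S^{\,\ast}(V^j,V^j)$ where each summand is exactly the scalar form~\eqref{ind_spec} built from the component $V^j$ and $|\nabla_\eta\Phi|_g$ equals $\cot r$ or $\coth r$ respectively (as noted in the Remark following Definition~\ref{def:ind_spec}). Here I must be careful that the form $S_S$ in~\eqref{ind_spec} is written for the harmonic extension $\widehat\varphi$ of a boundary function, whereas the components $V^j$ are a priori arbitrary; the passage is justified because among all extensions of a fixed boundary trace the solution of $\Delta_g\widehat\varphi\pm k\widehat\varphi=0$ minimizes the relevant bulk part of the form, so replacing $V^j$ by the harmonic extension of its boundary trace only lowers the energy and hence does not shrink the negative cone.

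Next I would run the standard counting argument. Suppose $\Ind_S(\Sigma)=m$, so that $S_S$ is negative definite on at most an $m$-dimensional space and is nonnegative on a subspace of finite codimension $m$ in the relevant trace space. Given a subspace $U$ of admissible vector fields on which $S_E$ is negative definite, I project each component map: the components of the fields in $U$ live in the product of $n+1$ copies of the scalar space, but the constraint $V\cdot\Phi=0$ cuts this down so that effectively only $n$ independent scalar slots contribute. If $\dim U > n\,m=n\Ind_S(\Sigma)$, then by a dimension count one can find a nonzero $V\in U$ all of whose $n$ effective components lie in the nonnegativity subspace of $S_S$, forcing $S_E(V,V)=\sum_j S_S^{\,\ast}(V^j,V^j)\geqslant 0$, contradicting negative definiteness of $S_E$ on $U$. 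Hence $\Ind_E(\Sigma)=\dim U\leqslant n\Ind_S(\Sigma)$.

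The main obstacle I expect is bookkeeping the constraint $V\cdot\Phi=0$ together with the boundary condition $V^0|_{\partial\Sigma}=0$ from Remark~\ref{rem:v0}, and verifying that after imposing these the number of genuinely free scalar components is $n$ rather than $n+1$, so that the factor in front of $\Ind_S(\Sigma)$ is exactly $n$. Getting this count right — rather than $n+1$ — is precisely the point where the constraint must be used, and it is the delicate step; the rest is the routine Courant-type min-max dimension comparison, which goes through once the decoupling $S_E=\sum_j S_S^{\,\ast}$ and the harmonic-extension minimization are in place.
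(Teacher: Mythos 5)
Your overall architecture --- decoupling $S_E(V,V)=\sum_j S_S(V^j,V^j)$ along the ambient coordinates, replacing each component by the extension solving $\Delta_g u\mp ku=0$ (which does not increase the form), and then running a Courant-type dimension count against the maximal negative subspace of $S_S$ --- is exactly the paper's proof, and your choice of the nonnegativity subspace (the $L^2(\partial\Sigma)$-orthogonal complement of the span of eigenfunctions with eigenvalue below $\cot r$, resp.\ $\coth r$) is the correct one. The gap is that the single step you defer (``verifying that the number of genuinely free scalar components is $n$ rather than $n+1$ \ldots is the delicate step'') is the entire content of the proposition, and the mechanism you sketch for it would not work. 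The pointwise algebraic constraint $V\cdot\Phi=0$ cannot reduce the decoupled sum to $n$ terms: $S_E$ splits along the \emph{fixed} coordinates $V^0,\dots,V^n$, whereas the constraint mixes all $n+1$ of them pointwise, so it is not compatible with the decomposition. Concretely, after you impose the $n\,\Ind_S(\Sigma)$ orthogonality conditions on $V^1,\dots,V^n$ and conclude $S_S(V^i,V^i)\geqslant 0$ for $i=1,\dots,n$, you still know nothing about the sign of the remaining summand $S_S(V^0,V^0)$; if it could be negative, the contradiction $S_E(V,V)\geqslant 0$ evaporates. ``Effectively $n$ free slots'' is not a statement that can be fed into the dimension count; what the count needs is the stronger assertion that the $0$-th summand is never negative (in fact vanishes) on the space being used.

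The paper closes this point by an argument in which $V\cdot\Phi=0$ plays no role at all; only the boundary condition $V^0|_{\partial\Sigma}=0$ of Remark~\ref{rem:v0} matters. After passing to eigen-extensions, $V^0$ solves $\Delta_g V^0=\pm kV^0$ in $\Sigma$ with zero Dirichlet data. In the spherical case integration by parts gives
$$
S_S(V^0,V^0)=\int_{\partial\Sigma}V^0\,\frac{\partial V^0}{\partial\eta}\,dL-\cot r\int_{\partial\Sigma}(V^0)^2\,dL=0,
$$
so the $0$-th summand contributes nothing, whether $V^0$ vanishes or is a nonzero Dirichlet eigenfunction with eigenvalue $k$. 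In the hyperbolic case the same boundary condition forces $V^0\equiv 0$ outright, since a nonzero solution would be a Dirichlet eigenfunction with the negative eigenvalue $-k$, which is impossible. With this vanishing statement in hand, only $V^1,\dots,V^n$ can contribute negatively, the number of linear constraints is indeed $n\,\Ind_S(\Sigma)$, and your contradiction goes through verbatim. So the fix is short, but it is a missing idea rather than bookkeeping: the factor $n$ comes from showing the $0$-component contributes zero, not from counting degrees of freedom cut out by the constraint.
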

 
 \begin{proof} We apply the same method of the proof as in~\cite{karpukhin2021index}(see also \cite{medvedev2023index} for the Steklov case).  
 
Let $V$ be a maximal negative space of the quadratic form $S_S$, i.e., $\dim V=\Ind_S(\Sigma)$. Assume that 
$$
n\Ind_S(\Sigma) <\Ind_E(\Sigma).
$$
Then there exists a vector field $X$ such that $S_E(X,X)<0$ and whose components $X^i,~i=1,\ldots, n$ are $L^2(\partial\Sigma)$-orthogonal to any function $f\in V$, i.e., $S_S(X^i,X^i)\geqslant 0$ for any $i=1,\ldots, n$. Recall that $\widehat f$ denotes the extension of the function $f$ on $\partial\Sigma$ by the solution of the equation $\Delta_g \widehat u-k\widehat u=0$ on $\Sigma$. Observe that
 \begin{align}\label{ineq:hat}
 \int_{\Sigma}\Big(|\nabla^g \widehat f|^2 - k\widehat f^2\Big)\,dA \leqslant  \int_{\Sigma}\Big(|\nabla^g f|^2 - kf^2\Big)\,dA,
 \end{align}
 by the property of the operator $f\to \widehat f$. So, one has
 $$
  \int_{\Sigma}\Big(|\nabla^g \widehat X^i|^2 - k(\widehat X^i)^2\Big)dA \leqslant  \int_{\Sigma}\Big(|\nabla^g X^i|^2 - k(X^i)^2\Big)dA,\, i=0,1,\ldots,n
 $$
  Moreover, by Remark~\ref{rem:v0}, the coordinate $X^0$ vanishes on $\partial\Sigma$. Hence, $S_S(X^0,X^0)=0$. Then one has
 $$
0>S_E(X,X)\geqslant \sum_{i=0}^nS_S(X^i, X^i) \geqslant 0,
$$ 
where $S_S$ is the quadratic form in the definition of the spectral index. We arrive at a contradiction.
\end{proof}

\begin{remark}
The case of an FBMS $\Sigma$ in $\mathbb B(r)\subset \mathbb H^n$ turns out to be more delicate because of the signature of the Minkowskian metric. For this reason one cannot apply the same arguments as in the proof of the previous proposition directly to this case in order to conclude that $\Ind_E(\Sigma)\leqslant n\Ind_S(\Sigma)$. However, we conjecture, that this inequality holds true for the hyperbolic case as well as for the spherical case.
\end{remark}

In~\cite{lima2022bounds} Lima proved a general upper bound on the index of a compact $2$-dimensional FBMS in \textit{any} ambient Riemannian manifold. His result states that
 $$
   \Ind(\Sigma)\leqslant  \Ind_E(\Sigma)+\dim\mathcal M(\Sigma),
 $$
 where $\mathcal M(\Sigma)$ is the moduli space of conformal structures on $\Sigma$. Combining Lima's inequality with Proposition~\ref{prop:spec_ind}, we obtain
  
 \begin{theorem}\label{thm:ind_M}
  Let $\Sigma$ be a $2$-dimensional FBMS in $\mathbb B(r)\subset \mathbb S^n_+$. Then
 \begin{align*}
  \Ind(\Sigma)&\leqslant n\Ind_S(\Sigma)+\dim\mathcal M(\Sigma).
 \end{align*}
 \end{theorem}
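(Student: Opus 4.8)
The plan is to obtain the bound as an immediate composition of two estimates that are at this point already in hand, so the argument amounts to chaining two inequalities. First I would invoke the general upper bound of Lima from~\cite{lima2022bounds}, valid for any compact two-dimensional free boundary minimal submanifold of an arbitrary Riemannian manifold with boundary, namely
$$\Ind(\Sigma) \leqslant \Ind_E(\Sigma) + \dim\mathcal M(\Sigma).$$
Here one only needs to check that our setting falls within the scope of that theorem: $\Sigma$ is a compact two-dimensional free boundary minimal submanifold, and the ambient geodesic ball $\mathbb B^n(r) \subset \mathbb S^n_+$ or $\mathbb H^n$ is a Riemannian manifold with boundary of exactly the kind Lima's result allows.

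Second, I would feed in the estimate of Proposition~\ref{prop:spec_ind}, which bounds the index of the second variation of energy by the spectral index,
$$\Ind_E(\Sigma) \leqslant n\Ind_S(\Sigma).$$
Substituting this into the previous display gives
$$\Ind(\Sigma) \leqslant n\Ind_S(\Sigma) + \dim\mathcal M(\Sigma),$$
which is exactly the asserted inequality.

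The combination itself is routine; all the genuine difficulty has been absorbed into the two inputs. Lima's inequality is the nontrivial comparison between the area index and the energy index, while Proposition~\ref{prop:spec_ind} carries the Karpukhin-type splitting argument that reduces the energy index to $n$ copies of the spectral index by decomposing an energy-negative vector field into its coordinate functions and using that the minimal-energy fields are precisely those whose components satisfy $\Delta_g X^i \pm kX^i = 0$. Thus the only point I would be careful about is confirming that the hypotheses of Lima's theorem hold verbatim in the present geometry; once that verification is recorded, no further estimate is needed and the two displays combine to finish the proof.
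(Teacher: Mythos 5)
Your proposal is correct and is exactly the paper's proof: the author invokes Lima's general bound $\Ind(\Sigma)\leqslant \Ind_E(\Sigma)+\dim\mathcal M(\Sigma)$ for compact $2$-dimensional FBMS in an arbitrary ambient manifold and combines it with Proposition~\ref{prop:spec_ind} to conclude. No differences in approach to report.
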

 
 \begin{remark}
 The same inequality also holds true in the flat case (see \cite[Theorem 1.6]{medvedev2023index}).
 \end{remark}
 
 \subsection{Geodesic balls} First, we consider $k$-dimensional free boundary minimal geodesic balls in $n$-dimensional ($n>k$) geodesic balls in $\mathbb S^n_+$ and $\mathbb H^n$. We show 
 
 \begin{theorem}
 The index of a $k$-dimensional free boundary minimal geodesic ball in an $n$-dimensional geodesic ball in $\mathbb S^n_+$ or $\mathbb H^n$ equals $n-k$.
 \end{theorem}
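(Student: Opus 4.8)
The plan is to use that a free boundary minimal geodesic $k$-ball is totally geodesic (by the Fraser--Schoen rigidity cited above, and directly in higher dimensions), so its second fundamental form $B$, and hence the Simons operator $\mathcal{B}$ appearing in the second variation of Subsection~\ref{sec:second}, both vanish. First I would normalise the picture: realise $\Sigma=\mathbb{B}^{k}(r)$ as the intersection of $\mathbb{B}^{n}(r)$ with the totally geodesic copy of $\mathbb{S}^{k}$ (resp. $\mathbb{H}^{k}$) cut out by $x_{k+1}=\cdots=x_{n}=0$, so that $\Phi_{k+1}=\cdots=\Phi_{n}\equiv 0$ on $\Sigma$. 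A direct computation then shows that the constant coordinate fields $e_{k+1},\dots,e_{n}$ restrict to a global orthonormal frame of the normal bundle $N\Sigma$ which is parallel for the induced normal connection; since the ambient space has constant curvature and $B=0$, the Ricci equation also gives $R^{\perp}\equiv 0$, so $N\Sigma$ is flat and globally trivial.

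Next I would insert a normal field $X=\sum_{m=k+1}^{n}\varphi_{m}e_{m}$ into the second variation $S$. Using $\mathcal{B}=0$, the constant-curvature identity $R(X)\cdot X=\pm k|X|^{2}$ (with $+$ for $\mathbb{S}^{n}_{+}$ and $-$ for $\mathbb{H}^{n}$), and the fact that $\partial\mathbb{B}^{n}(r)$ is totally umbilic with principal curvature $\cot r$ (resp. $\coth r$), so that $B_{\partial\mathbb{B}^{n}(r)}(e_{m},e_{m})\cdot\eta$ is a constant, the quadratic form diagonalises completely: $S(X,X)=\sum_{m=k+1}^{n}Q(\varphi_{m})$, where
$$Q(\varphi)=\int_{\Sigma}\big(|\nabla^{g}\varphi|^{2}\mp k\,\varphi^{2}\big)\,dA_{g}-c_{r}\int_{\partial\Sigma}\varphi^{2}\,dL_{g},$$
with $c_{r}=\cot r$ in the spherical case and $c_{r}=\coth r$ in the hyperbolic case. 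Consequently $\Ind(\Sigma)=(n-k)\,\mu_{-}(Q)$, where $\mu_{-}(Q)$ is the number of negative eigenvalues of $Q$, and the whole theorem is reduced to the single scalar count $\mu_{-}(Q)=2$.

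For that count I would minimise the interior part of $Q$ over fixed boundary data, exactly as in Definition~\ref{def:ind_spec}. This interior form is positive definite on functions vanishing on $\partial\Sigma$ (automatically in the hyperbolic case; in the spherical case because $r<\tfrac{\pi}{2}$ forces $\lambda_{1}^{D}(\Sigma)>k$), so the minimisation is legitimate and identifies $\mu_{-}(Q)$ with the spectral index $\Ind_{S}(\Sigma)$; the theorem thus becomes the statement $\Ind_{S}(\Sigma)=2$. To compute $\Ind_{S}(\Sigma)$ I would pass to geodesic polar coordinates and separate variables, writing eigenfunctions as $f(\rho)Y_{\ell}(\omega)$ with $Y_{\ell}$ a degree-$\ell$ spherical harmonic on $\mathbb{S}^{k-1}$; for each $\ell$ the radial solution regular at the centre yields exactly one $k$-Steklov (resp. $(-k)$-Steklov) eigenvalue, and I must locate those lying strictly below $c_{r}$. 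The explicit coordinate functions pin the bottom of the spectrum: $\Phi_{0}=\cos\rho$ (resp. $\cosh\rho$) is the ground eigenfunction, while $\Phi_{1},\dots,\Phi_{k}$ sit at the level $c_{r}$. A monotonicity/comparison analysis of the regular radial solutions (Legendre--Gegenbauer functions and their hyperbolic analogues) then counts precisely which radial modes descend below $c_{r}$, giving $\Ind_{S}(\Sigma)=2$ and hence $\Ind(\Sigma)=2(n-k)$.

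The hard part is exactly this last count. The delicate point is that the first nonzero harmonic sector produces eigenfunctions sitting \emph{on} the threshold $c_{r}$ (the genuine Jacobi fields $\Phi_{j}e_{m}$, which contribute to the nullity rather than the index), so one must distinguish very carefully between modes lying strictly below $c_{r}$ and those exactly at it, and verify that this distinction—hence the value of $\mu_{-}(Q)$—does not jump as $r$ ranges over $(0,\tfrac{\pi}{2})$ (resp. $(0,\infty)$). I would control this through the explicit radial ODEs together with the Lipschitz dependence of the Robin eigenvalues on the metric from Claim~5, and would use the general inequality $\Ind_{E}(\Sigma)\le n\,\Ind_{S}(\Sigma)$ of Proposition~\ref{prop:spec_ind} as a consistency check on the admissible value of the count.
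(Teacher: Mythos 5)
Your reduction is correct, and up to its very last step it is essentially the paper's own computation: total geodesy kills $B$ and $\mathcal B$, the constant fields give a parallel orthonormal frame of the (flat, trivial) normal bundle, the boundary sphere is umbilic with curvature $c_r$, and the second variation splits as $S(X,X)=\sum_m Q(\varphi_m)$, so that $\Ind(\Sigma)=(n-k)\,\mu_-(Q)$. Your identification $\mu_-(Q)=\Ind_S(\Sigma)$ is also sound: since $\lambda_1^D(\mathbb B^k(r))>k$ for $r<\tfrac{\pi}{2}$ (and automatically in the hyperbolic case), $H^1(\Sigma)$ splits $Q$-orthogonally into the solutions of $\Delta_g u=\pm k u$ and $H^1_0(\Sigma)$, with $Q$ positive definite on the second factor.

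The gap is the final count, which is asserted rather than proved, and which is false. By the paper's Proposition~\ref{prop:spec_ball}, $\Ind_S(\mathbb B^k(r))=1$, not $2$. Indeed this follows from your own setup: for a Steklov problem with \emph{fixed} interior frequency, each spherical-harmonic sector carries a one-dimensional space of solutions regular at the centre, hence exactly one eigenvalue per sector; the radial sector gives $\sigma_0=-\tan r$ (resp.\ $\tanh r$), the degree-one sector gives exactly the threshold $c_r$ (these are the Jacobi modes $\Phi_1,\dots,\Phi_k$ you yourself place \emph{at} the threshold), and the higher sectors lie strictly above. There is no second eigenvalue strictly below $c_r$, so your scheme, carried out correctly, yields $\Ind(\Sigma)=(n-k)\cdot 1=n-k$ rather than $2(n-k)$: your conclusion contradicts your own (correct) spectral bookkeeping.

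You should also be aware that this is not a repairable slip on your side only; it exposes a genuine tension with the paper's own argument, which obtains the lower bound $2(n-k)$ by exhibiting the fields $\partial_i$ and $\phi\,\partial_i$ ($\phi$ the ground eigenfunction) and checking that each makes $S$ negative. But two independent vectors on which a quadratic form is negative need not span a negative \emph{definite} plane. In the scalar reduction this plane is $\mathrm{span}\{1,\phi\}$, and there (say $k=2$, spherical case, $\phi=\cos t$) the Gram matrix of $Q/2\pi$ is
$$
\begin{pmatrix} \cos r-2 & -1\\ -1 & -\cos r \end{pmatrix},
$$
whose determinant equals $-(1-\cos r)^2<0$, so $Q$ is indefinite on it; equivalently, $1-\phi/\cos r$ vanishes on $\partial\Sigma$ and $Q$ is positive on $H^1_0(\Sigma)$. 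So the extra $(n-k)$ negative directions claimed in the paper's proof are not actually present, and your diagonalization makes this visible. The honest output of your approach is $\Ind(\Sigma)=(n-k)\,\Ind_S(\Sigma)=n-k$, together with a $k(n-k)$-dimensional space of threshold (null) fields $\Phi_j\partial_m$; to defend the value $2(n-k)$ one would have to refute the identification $\mu_-(Q)=\Ind_S(\Sigma)$, which, as noted above, holds.
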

 
 \begin{proof}
Consider the spherical case. The hyperbolic case is absolutely similar. Let $\mathbb B^k(r)$ denote a geodesic $k$-ball. Without loss of generality, assume that $\mathbb B^k(r)$ lies in the plane $\Pi:=\{x\in \mathbb R^{n+1}~|~x_1=\ldots=x_{n-k}=0\}$, i.e., $\mathbb B^k(r)=\mathbb B^n(r)\cap \Pi$. Obviously, the fields $\partial_1,\ldots,\partial_{n-k}$ decrease the area of $\mathbb B^k(r)$. Then the second variation of volume $S$ is negative definite on $span\{\partial_1,\ldots,\partial_{n-k}\}$. Hence, $\Ind(\mathbb B^k(r))\geqslant n-k$. We want to show that $\Ind(\mathbb B^k(r))\leqslant n-k$. Notice that $N_p\mathbb B^k(r)=span\{(\partial_1)_{| p},\ldots,(\partial_{n-k})_{| p}\}$ for any point $p\in \mathbb B^k(r)$. Then any normal to $\mathbb B^k(r)$ field $X$ takes form $X=\sum_{i=1}^{n-k}X^i\partial_i$ for some functions $X^i,~i=1,\ldots,n-k$. Hence, formula~\eqref{areaS} for the quadratic form of the second variation of volume computed on $X$ implies
\begin{align*}
S(X,X)=&\sum_{i=1}^{n-k}\left(\int_{\mathbb B^k(r)}\left(|\nabla^gX^i|^2_g-k(X^i)^2\right)dA-\cot r\int_{\partial\mathbb B^k(r)}(X^i)^2dL\right).
\end{align*}
Indeed, $\mathbb B^k(r)$ is totally geodesic, hence, the Simons operator vanishes, $\mathcal R(X)\cdot X=\sum_{i=1}^k\langle R(e_i,X)X,e_i\rangle=k|X|^2$, where $R$ is the Riemann tensor of $\mathbb S^n$, $e_1,\ldots,e_k$ is a local orthonormal basis in $\Gamma(T\mathbb B^k(r))$, and it is easy to compute that $\nabla^\perp_{e_i}X=e_i(X^j)\partial_j$, since $\nabla^\perp_{e_i}\partial_j=(\nabla^{\mathbb S^n}_{e_i}\partial_j)^\perp=(e_i(\partial_j)-B_{\mathbb S^n}(e_i,\partial_j))^\perp=0$, where $B_{\mathbb S^n}$ is the second fundamental form of $\mathbb S^n$, which is umbilic in $\mathbb E^{n+1}$, $e_i(\partial_j)$ is the Lie derivative of the constant field $\partial_j$ along $e_i$, and $\nabla^{\mathbb S^n}$ is the Levi-Civita connection on $\mathbb S^n$.

Now, we use inequality~\eqref{ineq:hat} to conclude that 
$$
S(X,X) \geqslant \sum_{i=1}^{n-k}S_S(X^i, X^i).
$$
Then the same arguments as in Proposition~\ref{prop:spec_ind} imply that
$$
\Ind(\mathbb B^k(r))\leqslant (n-k)\Ind_S(\mathbb B^k(r))
$$
and the theorem follows from Proposition~\ref{prop:spec_ball} below.
 \end{proof}
 
 \begin{remark}
 In the hyperbolic case we use inequality
 $$
  \int_{\Sigma}\Big(|\nabla^g f|^2 + kf^2\Big)\,dA \leqslant  \int_{\Sigma}\Big(|\nabla^g f|^2 + kf^2\Big)\,dA
 $$
 in place of inequality~\eqref{ineq:hat}. Here $\widehat f$ denotes the extension of the function $f$ on $\partial\Sigma$ by the solution of the equation $\Delta_g \widehat u+k\widehat u=0$ on $\Sigma$. It is important to emphasize that \emph{we do not deal here with the signature of the Minkowskian metric}. This is why the arguments in the proof of the previous theorem work equally for the spherical and hyperbolic cases.
 \end{remark}
  
 \begin{proposition}\label{prop:spec_ball}
One has $\Ind_S(\mathbb B^k(r))=1$ for $\mathbb B^k(r) \subset \mathbb B^n(r) \subset \mathbb S^n_+$ or $\mathbb H^n$. 
\end{proposition}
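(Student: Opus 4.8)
The plan is to compute the full $k$-Steklov spectrum of the ball explicitly by separation of variables. Since a free boundary minimal geodesic $k$-ball is totally geodesic, it is isometric to a metric ball of radius $r$ in the space form $\mathbb S^k$ (resp. $\mathbb H^k$), and I would work in that model. I describe the spherical case; the hyperbolic case is identical with $\sin,\cos,\cot$ replaced by $\sinh,\cosh,\coth$ and the frequency $k$ replaced by $-k$.

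First I would pass to geodesic polar coordinates, in which the metric is $d\rho^2+\sin^2\rho\,g_{S^{k-1}}$ with $\rho\in[0,r]$, so that $\partial\mathbb B^k(r)=S^{k-1}$ and the outward unit normal is $\partial_\rho$. The $\operatorname{O}(k)$-symmetry forces the Dirichlet-to-Neumann operator $\mathcal D_k$ to commute with the isometric action, hence to preserve the decomposition of $L^2(\partial\mathbb B^k(r))$ into the spaces $\mathcal H_\ell$ of spherical harmonics of degree $\ell$ on $S^{k-1}$ (with $\Delta_{S^{k-1}}$-eigenvalue $\lambda_\ell=\ell(\ell+k-2)$). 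As each $\mathcal H_\ell$ is an irreducible $\operatorname{O}(k)$-representation, Schur's lemma shows that $\mathcal D_k$ acts on $\mathcal H_\ell$ as a single scalar $\sigma^{(\ell)}$; thus the $k$-Steklov spectrum is exactly $\{\sigma^{(\ell)}\}_{\ell\geqslant0}$, with $\sigma^{(\ell)}$ of multiplicity $\dim\mathcal H_\ell$. Writing the frequency-$k$ extension of $Y_\ell\in\mathcal H_\ell$ as $f_\ell(\rho)Y_\ell$ with $f_\ell$ the solution of the associated radial ODE regular at $\rho=0$, one has $\sigma^{(\ell)}=f_\ell'(r)/f_\ell(r)$.

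Next I would identify the two bottom sectors explicitly. The coordinate function $\Phi_0$ restricts to $f_0(\rho)=\cos\rho$, a degree-$0$ harmonic, giving $\sigma^{(0)}=-\tan r$; since $\cos\rho>0$ on $[0,r]$, Claim 2 shows this is the simple first eigenvalue $\sigma_0$. The $S^{k-1}$-direction coordinate functions restrict to $f_1(\rho)=\sin\rho$ times a degree-$1$ harmonic, giving $\sigma^{(1)}=\cot r$ with multiplicity $\dim\mathcal H_1=k$ (these are the $\Phi_{n-k+1},\dots,\Phi_n$ of the ambient immersion). In particular $\sigma^{(0)}=-\tan r<0<\cot r=\sigma^{(1)}$.

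The crux is the monotonicity $\sigma^{(\ell)}\geqslant\sigma^{(1)}=\cot r$ for all $\ell\geqslant1$. For this I would use the sectorwise variational characterization. Because $0<r<\frac{\pi}{2}$ forces $k$ to lie strictly below the first Dirichlet eigenvalue of $\mathbb B^k(r)$ (the threshold value $k$ being attained only for the hemisphere), the frequency-$k$ extension minimizes the energy among competitors with prescribed boundary data, and Claim 3 restricted to $\mathcal H_\ell$ yields
$$
\sigma^{(\ell)}=\min_{f(r)\neq0}\frac{\displaystyle\int_0^r\Big(f'(\rho)^2+\tfrac{\lambda_\ell}{\sin^2\rho}f(\rho)^2-k\,f(\rho)^2\Big)\sin^{k-1}\rho\,d\rho}{f(r)^2\sin^{k-1}r}.
$$
For each fixed $f$ the numerator is strictly increasing in $\lambda_\ell$, and $\lambda_\ell$ increases with $\ell$; hence $\sigma^{(\ell)}$ is nondecreasing in $\ell$, so $\sigma^{(\ell)}\geqslant\sigma^{(1)}=\cot r$ for $\ell\geqslant1$. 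Combining the three steps, the only $k$-Steklov eigenvalue strictly below $\cot r$ is $\sigma^{(0)}=-\tan r$, of multiplicity one, whence $\Ind_S(\mathbb B^k(r))=1$. I expect the main obstacle to be justifying the variational principle in each sector, i.e. verifying the coercivity (frequency below the bottom of the Dirichlet spectrum) needed for the Dirichlet principle; this is exactly where the hypothesis $0<r<\frac{\pi}{2}$ enters in the spherical case, while in $\mathbb H^k$ coercivity is automatic since the frequency $-k$ is negative.
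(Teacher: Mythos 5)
Your proof is correct, but it reaches the conclusion by a genuinely different mechanism than the paper. Both arguments start from the same separation of variables into spherical harmonics on $\mathbb S^{k-1}$, and both read off the degree-$0$ and degree-$1$ sector eigenvalues $-\tan r$ and $\cot r$ (resp. $\tanh r$ and $\coth r$) from the explicit radial solutions $\cos\rho$ and $\sin\rho$. The divergence is in how the remaining spectrum is controlled. The paper fixes a putative $\sigma_1$-eigenfunction and uses the Courant-type nodal domain theorem (its Claim 4, together with Claims 1 and 2) to show that only degree-$0$ and degree-$1$ harmonics can appear in its expansion; it then pins down the radial profiles by solving the two ODEs completely, exhibiting the second, linearly independent solutions via reduction of order and hypergeometric functions and discarding them because they fail to extend continuously to the origin. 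You instead diagonalize the Dirichlet-to-Neumann operator over \emph{all} sectors at once via $\operatorname{O}(k)$-equivariance and Schur's lemma, and dispose of every sector $\ell\geqslant 2$ in one stroke by the monotonicity in $\lambda_\ell$ of the sectorwise Rayleigh quotient, so that $\sigma^{(\ell)}\geqslant\sigma^{(1)}=\cot r$. What your route buys: no nodal-domain theorem, no computation of singular second solutions, and the stronger conclusion that the sector eigenvalues are ordered in $\ell$. What it costs: you must justify the sectorwise variational (Dirichlet) principle, i.e.\ the coercivity of $\int|\nabla u|^2-k\int u^2$ on $H^1_0$, and you correctly identify that this is exactly where $0<r<\frac{\pi}{2}$ enters (the frequency $k$ equals the first Dirichlet eigenvalue of the hemisphere, and strict domain monotonicity gives $k<\lambda_1^D(\mathbb B^k(r))$ for $r<\frac{\pi}{2}$), while in $\mathbb H^k$ the frequency $-k$ makes positivity automatic. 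A side benefit of your argument is that it treats all $k\geqslant 2$ uniformly, whereas the paper handles $k=2$ by reference to Lima--Menezes and $k>2$ by a separate hypergeometric computation.
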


\begin{proof}

Consider the spherical case. The hyperbolic case is similar (see below). The parametrization of the ball $\Phi: [-r, r]\times \mathbb S^{k-1}\to \mathbb B^k(r)$ is given by 
\begin{align*}
&\Phi_0(t,\theta_1,\ldots,\theta_{k-1}) =\cos t, \\
&\Phi_1(t,\theta_1,\ldots,\theta_{k-1})=\sin t \cos\theta_1,\\
&\Phi_2(t,\theta_1,\ldots,\theta_{k-1})=\sinh t \sin \theta_1\cos\theta_2,\\
&\ldots\\
&\Phi_{k-2}(t,\theta_1,\ldots,\theta_{k-1})=\sin t\sin\theta_1\sin\theta_2\ldots \sin\theta_{k-2}\cos\theta_{k-1},\\
&\Phi_{k-1}(t,\theta_1,\ldots,\theta_{k-1})=\sin t\sin\theta_1\sin\theta_2\ldots \sin\theta_{k-2}\sin\theta_{k-1},
\end{align*}
where $\theta_i\in [0,\pi]$ for $i=1,\ldots k-2$ and $\theta_{k-1}\in [0,2\pi)$. Then the Laplacian of the induced metric takes form
\begin{align*}
-\Delta_gf=\partial_{tt}f+(k-1)(\cot t)\partial_tf-\Delta_{\mathbb S^{k-1}(\sin t)}f,
\end{align*}
where the last term is the Laplacian of the round sphere of radius $\sin t$ that we denote as $\mathbb S^{k-1}(\sin t)$.

By Proposition~\ref{prop:char}, the functions $\Phi_i,~i=0,\ldots,k-1$ are eigenfunctions. Moreover, by Claim 3  $\Phi_0$ is a $\sigma_0( \mathbb B^k(r),k)$-eigenfunction. We need to show that the remaining functions $\Phi_i,~i=1,\ldots,k-1$ are $\sigma_1( \mathbb B^k(r),k)$-eigenfunctions. 

Since the Laplace-eigenfunctions $\{\phi_i\}_{i=0}^\infty$ of $\mathbb S^{k-1}(\sin t)$ --- which are also known as \textit{spherical harmonics} --- form an $L^2\big(\mathbb S^{k-1}(\sin t)\big)$-orthonormal basis, any function $f\in L^2(\mathbb B^k(r))$ decomposes as $f(t,x)=\sum_{i=0}^\infty a_i(t)\phi_i(x)$, where $t\in [-r,r]$ and $x\in \mathbb S^{k-1}(\sin t)$. Suppose that $f$ is a $\sigma_1(\mathbb B^k(r),k)$-eigenfunction. Then 
\begin{align}\label{system}
\begin{cases}
\Delta_gf-kf=0~&\text{in}~\mathbb B^k(r),\\
\dfrac{\partial f}{\partial t}=\sigma_1(\mathbb B^k(r),k)f~&\text{on}~\partial\mathbb B^k(r)
\end{cases}
\end{align} 
yields
\begin{align*}
\begin{cases}
a_i''(t)+(k-1)(\cot t)a_i'(t)+(k-\lambda_i(t))a_i(t)=0,~t\in[-r,r],\\
a_i'(\pm r)=\sigma_1(\mathbb B^k(r),k)a_i(\pm r)
\end{cases}
\end{align*}
for any $i\in\mathbb N\cup\{0\}$. Here $\lambda_i(t)$ is the $i$-th Laplace eigenvalue of $\mathbb S^{k-1}(\sin t)$. Moreover, it is not hard to see that the system~\eqref{system} is satisfied if, and only if, each function $a_i(t)\phi_i(x)$ is a $\sigma_1(\mathbb B^k(r),k)$-eigenfunction. It follows from Claims~1 and~3 that any $\sigma_1(\mathbb B^k(r), k)$-eigenfunction has exactly 2 nodal domains. However, the only spherical harmonics on $\mathbb S^{k-1}(\sin t)$, which have 2 nodal domains, are those, which are from the  $\lambda_1(t)$-eigenspace (see for example~\cite[Section 11.2]{bateman1953higher}). Then the only non-zero terms in the expansion of $f$ are those with $i=0,1,\ldots m$, where $m=k-1$ is the multiplicity of $\lambda_1(t)=\frac{k-1}{\sin^2t}$. Considering the equation 
$$
a_0''(t)+(k-1)(\cot t)a_0'(t)+ka_0(t)=0
$$
yields that $a_0(t)=const$. Indeed, $\cos t$ satisfies this equation. Then we look for a second solution $y$, which is linearly independent of $\cos t$, in the form $y(t)=z(t)\cos t$, where $z$ satisfies
$$
(\cos t)z''(t)+\left(-2\sin t+\frac{(k-1)\cos^2t}{\sin t}\right)z'(t)=0.
$$
The solution to this equation is
$$
z(t)=C_1\frac{\sin^{2-k}t_2F_1(\frac32,1-\frac{k}{2};2-\frac{k}{2};\sin^2t)}{2-k}+C_2,
$$
where $C_1$ and $C_2$ are some constants, and $_2F_1(a,b;c;x)$ is the hypergeometric function. It has a nonessential singularity at $x=0$. The case when $k=2$ was already considered in~\cite[Theorem 3]{lima2023eigenvalue}, so we can assume that $k>2$. Then the solution $y$ does not extend continuously at $t=0$. Hence, $a_0$ is a constant multiple $\cos t$. But it has one single nodal domain. Hence, $a_0\equiv0$.

Further, consider  the equation
$$
a_i''(t)+(k-1)(\cot t)a_i'(t)+\left(k-\frac{k-1}{\sin^2t}\right)a_i(t)=0,~i=1,\ldots m.
$$
Notice that $\sin t$ satisfies this equation. We look for a solution $y$, which is linearly independent of $\sin(t)$ in the form, $y(t)=z(t)\sin t$. Then $z$ satisfies
$$
(\sin t)z''(t)+(k+1)(\cos t)z'(t)=0.
$$
Solving this equation, we get
$$
z(t)=C_1\frac{\sin^{-k}t_2F_1(\frac12,-\frac{k}{2};1-\frac{k}{2};\sin^2t)}{k}+C_2,
$$
for some constants $C_1$ and $C_2$. Then, as we have just discussed above, the solution $y$ does not extend continuously at $t=0$. Thus, $a_i(t)=C_i\sin t,~i=1,\ldots m$ for some constants $C_i$. Therefore, $\Phi_l=(\sin t)\phi_l$, where $\phi_l(\theta_1,\ldots,\theta_{k-1})=\sin\theta_1\sin\theta_2\ldots\cos\theta_l,$ $1\leqslant l \leqslant {k-1}$, is the $l$-th component of the standard basis in the space of $\lambda_1(t)$-eigenfunctions, are $\sigma_1(\mathbb B^k(r),k)$-eigenfunctions. But  $\Phi_l$ has eigenvalue $\cot(r)$. Thus, $\Phi_l$ is a $\sigma_1(\mathbb B^k(r),k)$-eigenfunction and $\sigma_1(\mathbb B^k(r),k)=\cot r$. This concludes the proof for the spherical case.

To get the parametrization of the ball $\Phi: [-r, r]\times \mathbb S^{k-1}\to \mathbb B^k(r)$ for the hyperbolic case, one needs to replace $\cos t$ and $\sin t$ in the above parametrization for the spherical case by $\cosh t$ and $\sinh t$, respectively. Then the Laplacian of the induced metric takes form
$$
-\Delta_gf=\partial_{tt}f+(k-1)(\coth t)\partial_tf-\Delta_{\mathbb S^{k-1}(\sinh t)}f,
$$
where $\mathbb S^{k-1}(\sinh t)$ is the sphere of radius $\sinh t$. Similarly to the spherical case, any function $f\in L^2(\mathbb B^k(r))$ decomposes as $f(t,x)=\sum_{i=0}^\infty a_i(t)\phi_i(x)$, where $t\in [-r,r]$ and $x\in \mathbb S^{k-1}(\sinh t)$. Then $f$ is a $\sigma_1(\mathbb B^k(r),-k)$-eigenfunction if, and only if,
\begin{align*}
\begin{cases}
\Delta_gf+kf=0~&\text{in}~\mathbb B^k(r),\\
\dfrac{\partial f}{\partial t}=\sigma_1(\mathbb B^k(r),-k)f~&\text{on}~\partial\mathbb B^k(r),
\end{cases}
\end{align*} 
which implies
\begin{align*}
\begin{cases}
a_i''(t)+(k-1)(\coth t)a_i'(t)-(k+\lambda_i(t))a_i(t)=0,~t\in[-r,r],\\
a_i'(\pm r)=\sigma_1(\mathbb B^k(r),-k)a_i(\pm r)
\end{cases}
\end{align*}
As in the spherical case, we conclude that the only non-zero terms in the expansion of $f$ are those with $i=0,1,\ldots m$, where $m=k-1$ is the multiplicity of $\lambda_1(t)=\frac{k-1}{\sinh^2t}$. Consider the equation
$$
a_0''(t)+(k-1)(\coth t)a_0'(t)-ka_0(t)=0.
$$
It is not hard to verify that $\cosh t$ is a solution of this equation. For a second linearly independent solution $y(t)=z(t)\cosh t$ one has
$$
(\cosh t)z''(t)+\left(2\sinh t+\frac{(k-1)\cosh^2t}{\sinh t}\right)z'(t)=0.
$$
Solving it, we get
$$
z(t)=C_1\frac{\sinh^{2-k} t_2F_1(\frac32,1-\frac{k}{2};2-\frac{k}{2};-\sinh^2t)}{2-k}+C_2,
$$
if $k>2$. Here $C_1$ and $C_2$ are some constants. If $k=2$, then
$$
z(t)=C_1\left(\frac{1}{\cosh t}+\log\left(\tanh\frac{t}{2}\right)\right)+C_2,
$$
where $C_1$ and $C_2$ are constants. Then for any $k>1$, as in the spherical case, we conclude that $y$ does not extend continuously at $t=0$. Thus, $a_0$ is a constant multiplied by $\cosh t$, which is impossible since it has one single nodal domain. Therefore, $a_0\equiv0$.

Finally, consider  the equation
$$
a_i''(t)+(k-1)(\coth t)a_i'(t)-\left(k+\frac{k-1}{\sinh^2t}\right)a_i(t)=0,~i=1,\ldots m.
$$
Obviously, $\sinh t$ satisfies this equation. For a second linearly independent solution $y(t)=z(t)\sinh t$ we have
$$
(\sinh t)z''(t)+(k+1)(\cosh t)z'(t)=0.
$$
Solving this equation, we get
$$
z(t)=C_1\frac{\sinh^{-k}t_2F_1(\frac12,-\frac{k}{2};1-\frac{k}{2};-\sinh^2t)}{k}+C_2,
$$
where $C_1$ and $C_2$ are some constants. Arguing as above, we conclude that  $y$ does not extend continuously at $t=0$. Hence, $a_i(t)=C_i\sinh t,~i=1,\ldots m$ for some constants $C_i$. Then $\Phi_l=(\sinh t)\phi_l$, where $\phi_l$ is the $l$-component of the standard basis in the space of $\lambda_1(t)$-eigenfunctions, is a $\sigma_1(\mathbb B^k(r),-k)$-eigenfunction for any $l=1,\ldots k-1$. Thus, $\sigma_1(\mathbb B^k(r),-k)=\coth r$. The proof is concluded.

\end{proof}

 \subsection{Non-geodesic submanifolds}
 
 The following theorem is an adaptation of Theorem~1.5 in~\cite{medvedev2023index} to the case of geodesic balls in $\mathbb S^n_+$ or $\mathbb H^n$.
  
 \begin{theorem}\label{thm:ind+n}
Let $\Sigma$ be a free boundary minimal hypersurface in $\mathbb B^n(r)$ in $\mathbb S^n_+$ or $\mathbb H^n$ which is not contained in a
hyperplane in $\mathbb R^{n+1}$ passing through the origin. Then one has
$$
\Ind(\Sigma) \geqslant \Ind_S(\Sigma)+n.
$$
\end{theorem}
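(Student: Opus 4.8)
The plan is to produce an explicit subspace of $C^\infty(\Sigma)$ of dimension $\Ind_S(\Sigma)+n$ on which the second variation form $S$ from Subsection~\ref{sec:second} is negative definite; since $\Ind(\Sigma)$ is by definition the maximal dimension of such a subspace, this yields the claim. I will spell out the spherical case, writing $k=n-1$. The hyperbolic case is identical after the substitutions dictated by the two formulas for $S(u,u)$ in Subsection~\ref{sec:second} (replace $-(n-1)$ by $+(n-1)$ in the interior term and $\cot r$ by $\coth r$ on the boundary) and using the Minkowski product in place of the Euclidean one.

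The building blocks are two families of $k$-Steklov eigenfunctions. By Proposition~\ref{prop:char}, the ambient coordinate functions $\Phi_1,\dots,\Phi_n$ restrict to solutions of $\Delta_g\Phi_i=(n-1)\Phi_i$ in $\Sigma$ with $\frac{\partial\Phi_i}{\partial\eta}=(\cot r)\Phi_i$ on $\partial\Sigma$; that is, they are $k$-Steklov eigenfunctions of eigenvalue exactly $\cot r$. (The function $\Phi_0$, of eigenvalue $-\tan r<\cot r$, is discarded.) On the other hand, by Definition~\ref{def:ind_spec} there are exactly $m:=\Ind_S(\Sigma)$ $k$-Steklov eigenvalues $\sigma_1,\dots,\sigma_m<\cot r$; let $\varphi_1,\dots,\varphi_m$ be a corresponding $L^2(\partial\Sigma)$-orthonormal system of eigenfunctions, which exists by Claim~1 together with a Gram--Schmidt procedure inside each eigenspace. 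I will test $S$ on $W:=\myspan\{\varphi_1,\dots,\varphi_m,\Phi_1,\dots,\Phi_n\}$.

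The decisive point is that every $u=\sum_j a_j\varphi_j+\sum_i b_i\Phi_i\in W$ again solves $\Delta_g u=(n-1)u$, so the interior integrand in the Green-form expression for $S$ displayed in Subsection~\ref{sec:second} collapses from $(\Delta_g u-(n-1)u-|B|^2u)u$ to $-|B|^2u^2$, leaving
\[
S(u,u)=-\int_\Sigma|B|^2u^2\,dA+\int_{\partial\Sigma}\Big(\tfrac{\partial u}{\partial\eta}-(\cot r)u\Big)u\,dL.
\]
On $\partial\Sigma$ one has $\frac{\partial u}{\partial\eta}-(\cot r)u=\sum_j a_j(\sigma_j-\cot r)\varphi_j$, because each $\Phi_i$-term contributes $(\cot r-\cot r)\Phi_i=0$. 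Since $\sigma_j<\cot r$, which is the eigenvalue of every $\Phi_i$, Claim~1 makes all cross integrals $\int_{\partial\Sigma}\varphi_j\Phi_i$ vanish, and orthonormality of the $\varphi_j$ reduces the boundary term to $\sum_j a_j^2(\sigma_j-\cot r)$. Thus $S(u,u)=-\int_\Sigma|B|^2u^2\,dA+\sum_j a_j^2(\sigma_j-\cot r)\le 0$.

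It remains to upgrade this to negative definiteness, which is the step I expect to be the crux. Equality $S(u,u)=0$ forces, term by term, all $a_j=0$ (as $\sigma_j-\cot r<0$) together with $\int_\Sigma|B|^2u^2\,dA=0$, so that $u=\langle\Phi,v\rangle$ with $v=(0,b_1,\dots,b_n)$. Here the hypothesis does double duty. Because $\Sigma$ is not contained in a hyperplane through the origin it is in particular not totally geodesic, so $|B|^2$ does not vanish identically and $\{|B|^2>0\}$ is a nonempty open set. Moreover $u=\langle\Phi,v\rangle$ is real-analytic (minimal immersions are analytic), and were it to vanish on an open set then $\Sigma$ would lie in the hyperplane $\{\langle x,v\rangle=0\}$ through the origin, contradicting the hypothesis unless $v=0$; hence for $v\neq0$ the zero set of $u$ has empty interior and $\int_\Sigma|B|^2u^2\,dA>0$. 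Consequently $\int_\Sigma|B|^2u^2\,dA=0$ is possible only for $v=0$, i.e. $u=0$. This simultaneously proves that $S$ is negative definite on $W$ and that the $m+n$ generators are linearly independent, so $\dim W=\Ind_S(\Sigma)+n$ and $\Ind(\Sigma)\ge\Ind_S(\Sigma)+n$. The only adaptations needed for $\mathbb H^n$ are the sign changes noted above together with the observation that $\Phi_0$ now has eigenvalue $\tanh r<\coth r$, so it is again discarded while the spacelike coordinates $\Phi_1,\dots,\Phi_n$ supply the extra $n$ directions.
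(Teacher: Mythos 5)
Your proof is correct and follows essentially the same route as the paper: test the second variation form on the span of the $\Ind_S(\Sigma)$ eigenfunctions with eigenvalues below $\cot r$ (resp.\ $\coth r$) together with the coordinate functions $\Phi_1,\dots,\Phi_n$, kill the interior term using $\Delta_g u=\pm(n-1)u$, reduce the boundary term via the $L^2(\partial\Sigma)$-orthogonality of Claim~1, and invoke the hyperplane hypothesis for strict negativity. The only cosmetic difference is that the paper keeps $u_0=\Phi_0$ as an explicit basis element of the test space (its eigenvalue $-\tan r$, resp.\ $\tanh r$, contributes a manifestly negative boundary term), whereas you absorb its eigenspace into the orthonormal family $\varphi_j$; your unique-continuation argument showing $\int_\Sigma|B|^2u^2\,dA>0$ spells out a step the paper leaves implicit.
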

 
\begin{proof}
 First we consider the spherical case. Since $\Sigma$ is a free boundary minimal hypersurface in $\mathbb B^n(r)$, then the coordinate functions $u_1,\ldots,u_n$ are $(n-1)$-Steklov eigenfunctions with eigenvalue $\cot r$ and $u_0$ has eigenvalue $-\tan r$. Note that  $u_0,u_1,\ldots,u_n$ are linearly independent, as soon as $\Sigma$ is not contained in a
hyperplane in $\mathbb R^{n+1}$ passing through the origin. Suppose that $\Ind_S(\Sigma)=k+1$, i.e., there are $k$ linearly independent $(n-1)$-Steklov eigenfunctions $\varphi_1,\ldots,\varphi_k$ with eigenvalues $\sigma_i<\cot r,\, i=1,\ldots,k$, respectively, plus the eigenfunction $u_0$. Without loss of generality, one can assume that $\varphi_1,\ldots,\varphi_k$ are orthonormal with respect to the $L^2(\partial\Sigma)$-norm. Notice also that $u_0$ is $L^2(\partial\Sigma)$-orthogonal to $u_j,~\forall j=1,\ldots,n$, since the eigenvalue of $u_0$ is different from the eigenvalue of any of $u_j$. Consider $V=span\{\varphi_1,\ldots,\varphi_k,u_0,u_1,\ldots,u_n\}$. One can see that $\dim V=k+1+n$. We claim that the index form $S$ is negative definite on $V$. Indeed, let $\psi\in V$, i.e., $\psi=\sum_{i=1}^k\alpha_i\varphi_i+\sum_{j=0}^n\beta_ju_j$. Since $\Sigma$ is a hypersurface, the index form $S$ on $\psi$ reads:
\begin{gather}\label{form}
S(\psi,\psi)=\int_\Sigma \Big(\Delta_g\psi-(n-1)\psi-|B|^2\psi\Big)\psi dA+\int_{\partial\Sigma}\left(\frac{\partial \psi}{\partial \eta}-\cot r\,\psi\right)\psi dL.
\end{gather}
Obviously, $\Delta_g\psi-(n-1)\psi=0$, since it is a linear combination of $(n-1)$-Steklov eigenfunctions. Moreover,  
$$
\frac{\partial \psi}{\partial \eta}=\sum_{i=1}^k\alpha_i\sigma_i\varphi_i-\tan r\,\beta_0u_0+\cot r\sum_{j=1}^n\beta_ju_j~\text{on $\partial\Sigma$}.
$$
One may easily check that
\begin{gather}\label{form1}
\int_{\partial\Sigma}\frac{\partial \psi}{\partial \eta}\psi dL=|\partial\Sigma|_g\sum_{i=1}^k\alpha^2_i\sigma_i-\tan r\,\beta_0^2\int_{\partial\Sigma}u_0^2dL+\cot r\int_{\partial\Sigma}\left(\sum_{j=1}^n\beta_ju_j\right)^2dL.
\end{gather}
Similarly,
\begin{gather}\label{form2}
\int_{\partial\Sigma}\psi^2 dL=|\partial\Sigma|_g\sum_{i=1}^k\alpha^2_i+\beta_0^2\int_{\partial\Sigma}u_0^2dL+\int_{\partial\Sigma}\left(\sum_{j=1}^n\beta_ju_j\right)^2dL.
\end{gather}
Plugging~\eqref{form1} and~\eqref{form2} into~\eqref{form}, one gets that $S(\psi,\psi)<0$, as soon as $\Sigma$ is not contained in a
hyperplane in $\mathbb R^{n+1}$ passing through the origin, and since $\sigma_i<\cot r,\,i=1,\ldots,k$. Therefore,
$$
\Ind(\Sigma)\geqslant k+1+n=\Ind_S(\Sigma)+n.
$$
The proof in the hyperbolic case is absolutely similar. In this case we consider the vector space $V=span\{\varphi_1,\ldots,\varphi_k,u_0,u_1,\ldots,u_n\}$ defined in the same way as in the spherical case. Here $u_0$ has eigenvalue $\tanh r$ and $u_i$ have eigenvalues $\coth r$ for all $i=1,\ldots, n$. Further, we take $\psi=\sum_{i=1}^k\alpha_i\varphi_i+\sum_{j=0}^n\beta_ju_j$, for which we get first
\begin{gather}\label{form3}
\int_{\partial\Sigma}\frac{\partial \psi}{\partial \eta}\psi dL=|\partial\Sigma|_g\sum_{i=1}^k\alpha^2_i\sigma_i+\tanh r\,\beta_0^2\int_{\partial\Sigma}u_0^2dL+\coth r\int_{\partial\Sigma}\left(\sum_{j=1}^n\beta_ju_j\right)^2dL
\end{gather}
and second
\begin{gather}\label{form4}
\int_{\partial\Sigma}\psi^2 dL=|\partial\Sigma|_g\sum_{i=1}^k\alpha^2_i+\beta_0^2\int_{\partial\Sigma}u_0^2dL+\int_{\partial\Sigma}\left(\sum_{j=1}^n\beta_ju_j\right)^2dL.
\end{gather}
Plugging~\eqref{form3} and~\eqref{form4} into
$$
S(\psi,\psi)=\int_\Sigma \Big(\Delta_g\psi+(n-1)\psi-|B|^2\psi\Big)\psi dA+\int_{\partial\Sigma}\left(\frac{\partial \psi}{\partial \eta}-\coth r\,\psi\right)\psi dL,
$$
one gets that $S(\psi,\psi)<0$, as soon as $\Sigma$ is not contained in a
hyperplane in $\mathbb R^{n+1}$ passing through the origin, and since $\sigma_i<\coth r,\,i=1,\ldots,k$.
\end{proof}

As a simple corollary of the previous theorem, we obtain

\begin{corollary}\label{cor:n+1}
The index of any free boundary minimal hypersurface in $\mathbb B^n(r)$ in $\mathbb S^n_+$ or $\mathbb H^n$, which is not contained in a
hyperplane in $\mathbb R^{n+1}$ passing through the origin, is at least $n+1$. Moreover, if the index of such a hypersuface if $n+1$, then the spectral index of it is one.
\end{corollary}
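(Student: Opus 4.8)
The plan is to read the corollary directly off Theorem~\ref{thm:ind+n}, which asserts $\Ind(\Sigma)\geqslant \Ind_S(\Sigma)+n$ for any free boundary minimal hypersurface not contained in a hyperplane through the origin. Under the same hypothesis it therefore suffices to establish the a priori lower bound $\Ind_S(\Sigma)\geqslant 1$, after which
$$
\Ind(\Sigma)\geqslant \Ind_S(\Sigma)+n\geqslant n+1
$$
follows immediately.

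To obtain $\Ind_S(\Sigma)\geqslant 1$, I would isolate the role of the $0$-th coordinate function $u_0=\Phi_0$. The hypothesis that $\Sigma$ is not contained in a hyperplane through the origin guarantees that $\Phi_0,\ldots,\Phi_n$ are linearly independent; in particular $u_0\not\equiv 0$. By Proposition~\ref{prop:char}, with $k=n-1$, the function $u_0$ is a $k$-Steklov eigenfunction whose boundary eigenvalue equals $-\tan r$ in the spherical case and $\tanh r$ in the hyperbolic case, so this value genuinely occurs in the spectrum $\sigma_0\leqslant\sigma_1\leqslant\cdots$ of the relevant Steklov problem. The key comparison is then with the threshold in Definition~\ref{def:ind_spec}: since $0<r<\frac{\pi}{2}$ one has $-\tan r<0<\cot r$, and since $r>0$ one has $\tanh r<1<\coth r$. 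In either case the eigenvalue of $u_0$ lies strictly below the threshold defining $\Ind_S$, so at least one Steklov eigenvalue is counted and $\Ind_S(\Sigma)\geqslant 1$.

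There is essentially no obstacle here; the only point requiring care is to confirm that $u_0$ contributes a full unit to the spectral index, which reduces to the nonvanishing of $u_0$ together with the elementary inequalities $-\tan r<\cot r$ and $\tanh r<\coth r$. As a cross-check one can verify the variational form of the statement: $S_S$ is negative definite on the one-dimensional space $\myspan\{u_0\}$. Indeed, since $u_0$ satisfies the interior equation defining the harmonic extension, its extension is $u_0$ itself, and Green's formula together with the boundary condition $\frac{\partial u_0}{\partial\eta}=-(\tan r)u_0$ gives, in the spherical case,
$$
S_S(u_0,u_0)=(-\tan r-\cot r)\int_{\partial\Sigma}u_0^2\,dL<0,
$$
using $u_0=\cos r>0$ on $\partial\Sigma$ so that $\int_{\partial\Sigma}u_0^2\,dL>0$. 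The hyperbolic computation is identical with the coefficient $\tanh r-\coth r<0$ replacing $-\tan r-\cot r$. This makes explicit that $\Ind_S(\Sigma)\geqslant 1$, and the corollary follows.
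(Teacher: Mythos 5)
Your proposal is correct and follows exactly the route the paper intends: the corollary is stated there as an immediate consequence of Theorem~\ref{thm:ind+n} combined with the observation (implicit in the paper, explicit in your write-up) that $\Ind_S(\Sigma)\geqslant 1$ because the positive coordinate function $\Phi_0$ is a first eigenfunction whose eigenvalue $-\tan r$ (resp.\ $\tanh r$) lies strictly below the threshold $\cot r$ (resp.\ $\coth r$). Your variational cross-check $S_S(u_0,u_0)=(-\tan r-\cot r)\int_{\partial\Sigma}u_0^2\,dL<0$ is also correct and consistent with Definition~\ref{def:ind_spec}.
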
 

The first part of this statement was proved in~\cite{lima2023eigenvalue} in the spherical case. The second part easily follows from the observation that $\Ind_S(\Sigma)\geqslant 1$.

Yet another corollary is

\begin{corollary}
The critical spherical catenoid in $\mathbb B^3(r)$ in $\mathbb S^n_+$ has index $4$. The index of the critical spherical catenoid in $\mathbb B^3(r)$ in $\mathbb H^3$ is at least 4. 
\end{corollary}

\begin{remark}
We conjecture that the index of the critical spherical catenoid in $\mathbb B^3(r)$ in $\mathbb H^3$ is also 4. 
\end{remark}

This result is a simple combination of Theorems~\ref{thm:ind_M},~\ref{thm:ind+n} and the following one

\begin{theorem}\label{ind_cat}
The spectral index of the critical spherical catenoids in $\mathbb B^3(r)$ in $\mathbb S^3_+$ and $\mathbb H^3$ is one.
\end{theorem}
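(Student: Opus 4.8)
The plan is to extract the spectral index directly from the $(\pm 2)$-Steklov eigenvalue characterization of the coordinate functions, using the result of Lima and Menezes~\cite{lima2023eigenvalue} (adapted to the hyperbolic setting in Section~\ref{sec:apendix}) that these coordinate functions realize exactly the first and second eigenvalues.

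First I would record the setup. The critical spherical catenoid $\Sigma$ is a topological annulus, hence $\dim\Sigma=2$, sitting as a free boundary minimal surface in $\mathbb B^3(r)$, so $k=2$ and $n=3$. By Definition~\ref{def:ind_spec}, $\Ind_S(\Sigma)$ is the number of $2$-Steklov eigenvalues below $\cot r$ in the spherical case, and the number of $-2$-Steklov eigenvalues below $\coth r$ in the hyperbolic case. Proposition~\ref{prop:char} gives that $\Phi_0$ is a $(\pm 2)$-Steklov eigenfunction with eigenvalue $-\tan r$ (resp. $\tanh r$), and that $\Phi_1,\Phi_2,\Phi_3$ are $(\pm 2)$-Steklov eigenfunctions with the common eigenvalue $\cot r$ (resp. $\coth r$).

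Next I would identify these as the bottom two eigenvalues. Since $\Phi_0$ is the distinguished $x_0$-coordinate, equal to $\cos r$ (resp. $\cosh r$) on $\partial\Sigma$ and bounded below by $\cos r>0$ (resp. by $1>0$) throughout $\mathbb B^3(r)$, it does not change sign on $\Sigma$, so Claim 2 forces $\Phi_0$ to be a first eigenfunction; thus $\sigma_0(g,2)=-\tan r$ (resp. $\sigma_0(g,-2)=\tanh r$), which is simple by Claim 3. The essential point is that $\cot r$ (resp. $\coth r$) is the \emph{second} eigenvalue, i.e. $\sigma_1(g,2)=\cot r$ (resp. $\sigma_1(g,-2)=\coth r$), equivalently that no $(\pm 2)$-Steklov eigenvalue lies strictly between the eigenvalue of $\Phi_0$ and that of $\Phi_1,\Phi_2,\Phi_3$. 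To establish this I would invoke the Lima--Menezes argument: using the rotational symmetry of the catenoid one separates variables in the angular coordinate $\theta$, reduces the eigenvalue equation to a family of Sturm--Liouville ODEs in $s$ indexed by the Fourier mode, and applies the nodal-domain bound (Claim 4) to exclude any further eigenvalue below $\cot r$ (resp. $\coth r$). The hyperbolic version of this analysis is carried out in Section~\ref{sec:apendix}.

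Finally I would conclude the count. Granting $\sigma_0<\sigma_1=\cot r$ (resp. $\sigma_0<\sigma_1=\coth r$) with $\sigma_0$ simple, the only $2$-Steklov eigenvalue strictly smaller than $\cot r$ (resp. the only $-2$-Steklov eigenvalue strictly smaller than $\coth r$) is $\sigma_0$, so $\Ind_S(\Sigma)=1$. The main obstacle is the middle step: ruling out eigenvalues strictly between $\sigma_0$ and $\cot r$ (resp. $\coth r$). This is precisely where the explicit rotational structure of the catenoid and the separation-of-variables/nodal-domain analysis are indispensable; once that spectral gap is secured the index computation is immediate.
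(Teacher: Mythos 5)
Your skeleton coincides with the paper's proof: Proposition~\ref{prop:char} plus Claim~2 identify $\Phi_0$ as the (simple) first eigenfunction, the theorem reduces to establishing the spectral gap $\sigma_1(g,2)=\cot r$ (resp.\ $\sigma_1(g,-2)=\coth r$), and for the spherical case quoting Lima--Menezes~\cite{lima2023eigenvalue} is exactly what the paper does; the final count is likewise identical.

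The gap is in how you propose to secure that spectral gap. First, the nodal-domain bound does not by itself \emph{exclude} any eigenvalue below $\cot r$ (resp.\ $\coth r$): combined with Claims~1--2 it only shows that a putative $\sigma_1$-eigenfunction has exactly two nodal domains and therefore lives in the Fourier modes $k=0$ or $k=1$ of the decomposition $f=a_0(s)+\sum_k\left(a_k(s)\cos k\theta+b_k(s)\sin k\theta\right)$. The exclusion itself is an explicit ODE computation inside those two modes, and this is where essentially all of the content of the paper's proof lies: in mode $0$ one shows that the solution space of the radial equation is spanned by the radial parts of $\Phi_0$ and $\Phi_1$, whose Robin eigenvalues are $\tanh r$ and $\coth r$, so no intermediate eigenvalue can arise there; in mode $1$ one constructs by reduction of order the second solution $\sqrt{\rho^2(s)-1}\,h(s)$ with $h(s)=C_1\int_0^s\left(\rho^2(t)-1\right)^{-3/2}dt+C_2$, uses the symmetry $h(-s)=h(s)$ to check that it actually satisfies a Robin condition with a single constant on both boundary circles, and computes that constant to be $\coth r+\mu$ with $\mu=\left((\rho^2-1)^{3/2}\int_0^{s_0}(\rho^2-1)^{-3/2}dt\right)^{-1}>0$ evaluated at $s=s_0$. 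Without this positivity there is no gap, and nodal-domain counting cannot supply it. Second, for the hyperbolic case you defer this analysis to Section~\ref{sec:apendix}; but that section \emph{is} the paper's proof of Theorem~\ref{ind_cat}, so in a blind proof the citation is circular --- the hyperbolic adaptation (precisely the computation just described) is what you were being asked to provide.
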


The proof of this statement repeats the proof for the spherical case given in~\cite{lima2023eigenvalue} up to minor modifications for the hyperbolic case. We postpone it to~Subsection~\ref{sub:ind_cat}.

Finally, we obtain a corollary in the spirit of Corollary 7.3 in~\cite{devyver2019index}.

\begin{corollary}\label{cor:crit}
Let $\Sigma \subset \mathbb B^3(r)$ in $\mathbb S^3_+$ be an FBMS of index $4$, which is not contained in a
hyperplane in $\mathbb R^{4}$ passing through the origin. If $\Sigma$ is a topological annulus then it is the critical spherical catenoid.
\end{corollary}
\begin{proof}
Corollary~\ref{cor:n+1} immediately implies that $\Ind_S(\Sigma)=1$. Then Theorem C in~\cite{lima2023eigenvalue} implies that $\Sigma$ is the critical spherical catenoid.
\end{proof}

\begin{remark}
A similar result for the critical catenoid in a ball in $\mathbb E^3$ was obtained in~\cite[Corollary 7.3]{devyver2019index}. It also follows from the arguments that we provide above, since the critical catenoid is the only FBMS in a ball in  $\mathbb E^3$ with spectral index one (see~\cite[Theorem 1.2]{fraser2016sharp}).
\end{remark}

Corollary~\ref{cor:first} is a combination of Corollaries~\ref{cor:n+1} and \ref{cor:crit}.

\section{Apendix}\label{sec:apendix}

In this section we collect the proofs of the statements that we postponed in the main text. The main purpose of it is to convince the reader that the proofs of analogous statements, which were given in~\cite{lima2023eigenvalue}, also work in the setting of $\mathbb H^n$.

\subsection{Proof of Theorem~\ref{prop:char.min}}\label{sub:char.min}
We explain the proof of part (I) in details and sketch the proof of part (II). 

(I) Consider the subset of $\mathcal{H}_{g}$
$$
\left\{\Big(-\tau\big(\cosh r \, u_0\big)  + \tau\big(\sinh r\, u\big) + \, g,F\big(\cosh r \, u_{0},\sinh r \, u\big)\Big)\right\},
$$
where $u \in V_k(g),\,  ||u||_{L^2(\partial\Sigma,g)}=1$, and take its convex hull $\mathcal{K}$. The Hahn-Banach Theorem implies that $(0,0) \in \mathcal{K}$, i.e., one can find $u_1,\ldots,u_n \in V_{k}(g)$ with $||u_{i}||_{L^2(\partial\Sigma,g)}=1$, and $t_1,\ldots,t_n\in \mathbb R_{+}$ with $\sum_{j=1}^n t_j=1$ such that
\begin{align}\label{eq:extremal2}
\begin{cases}
\displaystyle\sum_{j=1}^n t_j\left(-\tau\big(\cosh r\, u_0\big)  + \tau\big(\sinh r\, u_j\big)+ \,g\right)=0 \  &\textrm{in}\ \Sigma,\\
\displaystyle\sum_{j=1}^n t_j F\left(\cosh r\, u_{0}, \sinh r\, u_{j}\right)=0\ &\textrm{on}\ \partial\Sigma.
\end{cases}
\end{align}
Let $v_0 = \vert\partial\Sigma\vert^{\frac{1}{2}} (\cosh r)u_0$ and $v_j = \left(t_j\vert\partial\Sigma\vert\right)^{\frac{1}{2}} (\sinh r)u_{j},\, j=1,\ldots,n$. Then the first equation of~\eqref{eq:extremal2} implies 

%We can rewrite the first equation as
%$$
%-\tau\big(\cosh r\, u_0\big) + \,g+ \sum_{j=1}^n\tau\big(\sqrt{t_j}(\sinh r)u_j\big) = 0.
%$$

%Denote $v_0 = \vert\partial\Sigma\vert^{\frac{1}{2}} (\cosh r)u_0$ and $v_j = \left(t_j\vert\partial\Sigma\vert\right)^{\frac{1}{2}} (\sinh r)u_{j},\, j=1,\ldots,n$. Hence, by definition of $\tau$, we get
\begin{align}\label{eq:12}
- \frac{1}{2}\Big(\big|\nabla^{g} v_0\big|^{2} + &2v_{0}^{2}\Big)g+dv_0\otimes dv_0+\\ \nonumber &\sum_{j=1}^{n} \left( \frac{1}{2}\Big(\big|\nabla^{g} v_j\big|^{2} +  2v_{j}^{2}\Big)g-dv_j\otimes dv_j\right)+g = 0,
\end{align}
which after taking the trace implies
\begin{equation}\label{eq:sumv}
-v_0^2+\sum_{j=1}^n v_{j}^2 = -1.
\end{equation}
Plugging it into \eqref{eq:12}, we obtain
\begin{equation}
-dv_0\otimes dv_0 +\sum_{j=1}^{n} dv_j\otimes dv_j = \frac{1}{2}\bigg(-\big|\nabla^{g} v_0\big|^{2}+\sum_{j=1}^{n}\big|\nabla^{g} v_j\big|^{2}\bigg)g.
\label{eq:imers}
\end{equation}
Recall that $\Delta_{g}v_j +  2 v_j = 0$. Then one has %~\eqref{eq:sumv} implies 
\begin{align*}
0&=\Delta_g \bigg(-v_{0}^2+\sum_{j=1}^n v_{j}^2\bigg) = -\bigg(2v_0\Delta_g v_0- 2\vert \nabla^g v_0\vert^2\bigg)+\sum_{j=1}^{n}\bigg(2v_j\Delta_g v_j- 2\vert \nabla^g v_j\vert^2\bigg) \\ &=4\bigg(v_{0}^2-\sum_{j=0}^n v_j^2\bigg)+ 2\bigg(\vert \nabla^g v_0\vert^2-\sum_{j=1}^n\vert \nabla^g v_j\vert^2\bigg) \implies -\vert \nabla^g v_0\vert^2+\sum_{j=1}^n\vert \nabla^g v_j\vert^2=2,
\end{align*}
where we also used~\eqref{eq:sumv}. Finally, coming back to~\eqref{eq:imers}, we get
\begin{equation}
-dv_0\otimes dv_0 +\sum_{j=1}^{n} dv_j\otimes dv_j =g.
\label{eq:iso}
\end{equation}
Hence, $v = (v_0,v_1,\ldots,v_n)$ defines an isometric minimal immersion of $\Sigma$ into $\mathbb{H}^{n}$. 

It remains to show that $v(\Sigma) \subset \mathbb B^n(r)$ and $v$ a is free boundary immersion. To this end, we use the second equation of \eqref{eq:extremal2}. It implies
\begin{align*}
0 &= \sum_{j=1}^n t_j F\left(\vert\partial\Sigma\vert^{-\frac{1}{2}}v_0,\left(t_j\vert\partial\Sigma\vert\right)^{-\frac{1}{2}}v_j\right)\\
&=-\frac{\sigma_{0}}{2}(\cosh^{2} r -v_{0}^{2})+\frac{\sigma_k}{2}\left(\sinh^2 r-\sum_{j=1}^n v_{j}^{2}\right)\\
&=-\frac12\sigma_{0}\cosh^{2} r+\frac12\sigma_{k}\sinh^{2}r-\frac12\left(-v_0\frac{\partial v_0}{\partial\eta}+\sum_{j=1}^{n}v_j\frac{\partial v_j}{\partial\eta}\right)\\
&=-\frac12\sigma_{0}\cosh^{2} r+\frac12\sigma_{k}\sinh^{2}r-\frac14\frac{\partial}{\partial\eta}\left(-v_{0}^{2}+\sum_{j=1}^{n}v_{j}^{2}\right)\\
&=-\frac12\sigma_{0}\cosh^{2} r+\frac12\sigma_{k}\sinh^{2}r~\text{ on}~\partial\Sigma,
\end{align*}
where we used in order that $\frac{\partial v_0}{\partial\eta}=\sigma_0 v_0,~\frac{\partial v_j}{\partial\eta}=\sigma_kv_j$ along $\partial\Sigma$ and equation~\eqref{eq:sumv}. Then we see that
\begin{equation}\label{eq:bound.cond}
\sigma_{k}=(\coth^2r)\sigma_0.
\end{equation}
Notice that taking the normal derivative in~\eqref{eq:sumv} yields
\begin{equation}\label{eq:v0}
\sigma_k=(\sigma_k-\sigma_0)v_0^2 \ \mbox{on} \ \partial\Sigma.
\end{equation}
Further, using~\eqref{eq:bound.cond},~\eqref{eq:v0}, and the fact that $v_0$ is positive, we get that $v_{0} = \cosh r $ on $\partial\Sigma$. Moreover, since $v_0$ satisfies $\Delta_{g}v_{0} +  2 v_{0} = 0$, we conclude that the function $v_0$ is subharmonic. Then its maximum is attained on the boundary, i.e., $v_{0} \leqslant \cosh r$ in $\Sigma$. Hence, $v(\Sigma) \subset \mathbb{B}^{n}(r)$.  

Finally, in order to verify that $v$ is free boundary, we apply the tensor fields in \eqref{eq:iso} to $(\eta,\eta)$. We get %we obtain that on $\partial \Sigma$ it holds 
\begin{align*}
1= -\left(\frac{\partial v_0}{\partial \eta}\right)^2+\sum_{j=1}^n\left(\frac{\partial v_j}{\partial \eta}\right)^2= -\sigma_0^2v_0^2&+\sigma_k^2\left(\sum_{j=1}^nv_j^2\right)\\ &=  \sigma_0^2\cosh^2r+\sigma_0^2\coth^4r(\cosh^2r-1),~\text{on}~\partial\Sigma.
\end{align*}
Then $\sigma_0^2=\tanh^2 r$. The variational characterization (see Claim 2) implies that $\sigma_0\geqslant 0$. Then $\sigma_0=\tanh r$ and by~\eqref{eq:bound.cond} $\sigma_k=\coth r$. Hence, $v$ is a free boundary immersion.

(II) Consider of the following subset of $\mathcal H_g$
$$\left\{\Big(-2|\partial\Sigma|_{g}\left(\cosh^2 r\,u_{0}^{2} - \sinh^2 r\,u^{2}-1\right),F\big(\cosh r\,u_{0},\sin r\,u\big)\Big)\right\},$$
where $u \in V_k(g)$. Taking its convex hull $\mathcal K$ and using the Hahn-Banach Theorem we conclude that $(0,0) \in \mathcal{K}$, i.e., there exist $u_1,\ldots,u_n \in V_{k}(g)$, and $t_1,\ldots,t_n\in \mathbb R_+$, such that $\sum_{j=1}^n t_j=1$ and
\begin{align}\label{eq:extremal3}
\begin{cases}
\displaystyle\sum_{j=1}^n t_j\left(\cosh^2 r\,u_{0}^{2} -\sinh^2 r\,u_{j}^{2}-1\right)=0 \  &\textrm{in}\ \Sigma,\\
\displaystyle\sum_{j=1}^n t_j F\left(\cosh r\,u_{0},\sinh r\,u_{j}\right)=0\ &\textrm{on}\ \partial\Sigma.
\end{cases}
\end{align}
Let $v_0 = \cosh r\,u_0$ and $v_j = \sqrt{t_{j}}\sin r\,u_{j},\, j=1,\ldots,n$. Then the first equation of~\eqref{eq:extremal3} yields
$$-v_0^2+\sum_{i=1}^{n}v_{i}^{2} =-1.$$
It shows that $v(\Sigma) \subset \mathbb H^n$. Moreover, $\Delta_{g}v_j + 2v_j =0,\, j=0,1,\ldots,n$ implies that $v\colon (\Sigma, g) \to \mathbb{H}^n$ is a harmonic map. Finally, by the second equation of~\eqref{eq:extremal3} and the same arguments as in the proof of (I), we get that $v(\Sigma) \subset \mathbb B^n(r)$, $v(\partial\Sigma) \subset \partial\mathbb B^n(r)$, and $v$ is a free boundary immersion.

\subsection{Proof of Theorem~\ref{ind_cat}}\label{sub:ind_cat}

The critical spherical catenoid is given by (see~\eqref{H-annulus})
\begin{align*}
&\Phi_0(s,\theta)=\rho(s)\cosh \varphi(s),\\
&\Phi_1(s,\theta)=\rho(s)\sinh\varphi(s),\\
&\Phi_2(s,\theta)=\sqrt{\rho(s)^2-1}\cos\theta,\\
&\Phi_3(s,\theta)=\sqrt{\rho(s)^2-1}\sin\theta,
\end{align*}
where $\rho(s):=\sqrt{a\cosh(2s)+\frac12},\, (s,\theta)\in \Sigma,\, s\in[-s_0,s_0],\,\theta\in[0,2\pi)$. By Proposition~\ref{prop:char}, the coordinate functions of the critical spherical catenoid $\Phi_i, i=0,1,2,3$ are $-2$-Steklov eigenfunctions. Moreover, by Claim 3 $\Phi_0$ is a $\sigma_0=\tanh r$-eigenfunction since $\phi_0$ is positive. In the remaining part of the proof we show that $\Phi_i$, $i=1,2,3$ are $\sigma_1=\coth r$-eigenfunctions. %We will use the same arguments as in the proof of the disk case (see Theorem \ref{thm-disk}).

The Laplacian of the induced metric $g$ takes form
$$-\Delta_gf=\displaystyle \partial^2_{ss}f+\frac{1}{\rho^2-1} \partial^2_{\theta\theta}f+\frac{\rho\rho^{\prime}}{\rho^2-1}\partial_sf.$$
Notice that any $L^2(\Sigma,g)$ can be decomposed as
$$f(s,\theta)=a_0(s)\cdot 1+\displaystyle\sum_{k=1}^{\infty}[a_k(s)\cos(k\theta)+b_k(s)\sin(k\theta)].$$
Then arguing as in the proof of Proposition~\ref{prop:spec_ball}, we conclude that if $f$ is a $\sigma_1$-eigenfunction, then the functions $a_k$ and $b_k$ are identically zero for any $k\geqslant2$ and the functions $a_0,\,a_1\cos\theta,\, b_1\sin\theta$ are $\sigma_1$-eigenfunctions.

For the function $a_0$ one has
$$
 a_0{''}(s)+\frac{\rho(s)\rho^{\prime}(s)}{\rho^2(s)-1}a_0'(s )-2a_0(s)=0.
$$
A direct computation shows that $\Phi_0$ and $\Phi_1$ satisfy this equation. Since the order of the equation is 2, $a_0$ is a linear combination of $\Phi_0$ and $\Phi_1$. Since $\Phi_0$ is a $\sigma_0$-eigenfunction, it implies that if $a_0$ is not identically zero, then $\Phi_1$ is a $\sigma_1$-eigenfunction and $\sigma_1=\coth r$. Hence, $\Phi_2$ and $\Phi_3$ are also $\sigma_1$-eigenfunctions, since they have the same eigenvalue as $\Phi_1$

Consider now the case where $a_0\equiv0$. Then either $a_1$ is not identically zero, or $b_1$ is not identically zero. Without loss of generality, we suppose that $a_1$ is not identically zero. Then $a_1$ satisfies the equation
\begin{equation*}
 a_1{''}(s)+\frac{\rho(s)\rho^{\prime}(s)}{\rho^2(s)-1}a_1'(s )-\left(\frac{1}{\rho^2(s)-1}+2\right)a_1(s)=0.
\end{equation*}
Using the expression for the function $\rho$, one can simplify the previous equation as
\begin{equation}\label{eq:edo-a11}
\displaystyle\left(a\cosh(2s)-\frac{1}{2}\right)a_1''(s)+a\sinh(2s)a_1'(s)-2a\cosh(2s)a_1(s)=0.
\end{equation}
It is not hard to verify that $\sqrt{\rho^2(s)-1}$ satisfies this equation. We look for a second solution to it as $\sqrt{\rho^2(s)-1}h(s)$, where the function $h(s)$ satisfies
$$
3a\sinh(2s)h'(s)+\left(a\cosh(2s)-\frac{1}{2}\right)h''(s)=0.
$$
Solving it, we obtain
$$
h(s)=C_1\int_0^s\left(a\cosh(2t)-\frac{1}{2}\right)^{-\frac{3}{2}}dt +C_2,
$$
for some constants $C_1$ and $C_2$. Then we can take $\{\sqrt{\rho^2(s)-1},\,\sqrt{\rho^2(s)-1}h(s)\}$ as a basis in the space of solutions to the equation~\eqref{eq:edo-a11}. 

Further, we observe that $h$ satisfies
$$
\frac{\dfrac{\partial h}{\partial \eta}(-s_0)}{h(-s_0)}=\frac{\dfrac{\partial h}{\partial \eta}(s_0)}{h(s_0)}=:\mu.
$$
In order to prove it, we need to use that $\frac{\partial}{\partial\eta}=\pm\partial_s$ on $\{\pm s_0\}\times\mathbb S^1$ and $h(-s)=h(s)$. This implies that the functions  $\Phi_2(s,\theta)=\sqrt{\rho^2(s)-1}\cos\theta$ and $\Psi(s,\theta)=\sqrt{\rho^2(s)-1}h(s)\cos\theta$ are eigenfunctions with eigenvalues $\coth r$ and $\coth r+\mu$, respectively. We need to show that $\mu>0$. Then it will imply that $\sigma_1=\coth(r)$. 

One has
$$
\frac{\partial \Phi_2}{\partial s}=\coth r \Phi_2,
$$
which implies that
$$
\coth r=\frac{\rho\rho^{\prime}}{\rho^2-1}.
$$
Similarly,
$$
\frac{\partial \Psi}{\partial s}=(\coth r+\mu)\Psi
$$
yields
\begin{align*}
\coth r+\mu&=\frac{\rho\rho^{\prime}\int_0^{s_0}\left(\rho^2-1\right)^{-\frac{3}{2}}dt+(\rho^2-1)^{-1/2}}{(\rho^2-1)\displaystyle\int_0^{s_0}\left(\rho^2-1\right)^{-\frac{3}{2}}dt}\\
&=\coth r+\frac{1}{(\rho^2-1)^{\frac{3}{2}}\displaystyle\int_0^{s_0}\left(\rho^2-1\right)^{-\frac{3}{2}}dt}.
\end{align*}
Thus, $\mu>0$ and $\Phi_2$ is a $\sigma_1$-eigenfunction with $\sigma_1=\coth r$. Then the functions $\Phi_3(s,\theta)=\sqrt{\rho^2(s)-1}\sin\theta$ and $\Phi_1(s,\theta)=\rho(s)\sinh\varphi(s)$ are also $\sigma_1$-eigenfunctions, since they all have the same eigenvalue. This concludes the proof.

\bibliography{mybib}
\bibliographystyle{alpha}

\end{document}